\newtheorem{theorem}{Theorem}[section]
\theoremstyle{plain}
\newtheorem*{theorem*}{Theorem}
\theoremstyle{plain}
\newtheorem{corollary}{Corollary}[section]
\newtheorem{definition}{Definition}[section]
\newtheorem{lemma}{Lemma}[section]
\newtheorem{proposition}{Proposition}[section]
\newtheorem{remark}{Remark}[section]
\numberwithin{equation}{section}
\DeclareRobustCommand{\rchi}{{\mathpalette\irchi\relax}}
\newcommand{\irchi}[2]{\raisebox{\depth}{$#1\chi$}}
\title[Concave-Convex Critical Fractional Problems with mixed boundary data]{Concave-Convex Critical problems for the Spectral Fractional Laplacian with Mixed Boundary Conditions}
\keywords{Fractional Laplacian, Critical problem, Concave-Convex nonlinearities, Mixed Boundary Conditions.}%
\subjclass[2010]{Primary 35R11, 49J35, 35A15, 35S15}
\author{Alejandro Ortega}
\email[A. Ortega ]{alortega@math.uc3m.es}
\address[A. Ortega]{Departamento de Matem\'aticas,  
Universidad Carlos III de Madrid, Av. Universidad 30, 28911 Legan\'es (Madrid), Spain}
\thanks{This work has been supported by the Madrid Government (Comunidad de Madrid-Spain) under the Multiannual Agreement with UC3M in the line of Excellence of University Professors (EPUC3M23), and in the context of the V PRICIT (Regional Programme of Research and Technological Innovation).\\ The author is partially supported by the State Research Agency of Spain, under research project PID2019-106122GB-I00.}
\begin{document}
\maketitle
\begin{abstract}
In this work we study the existence of solutions to the following critical fractional problem with concave-convex nonlinearities, 
\begin{equation*}
        \left\{
        \begin{tabular}{lcl}
        $(-\Delta)^su=\lambda u^q+u^{2_s^*-1}$, &  $u>0$ &in $\Omega$, \\[2pt]
        $\mkern+51mu u=0$& &on $\Sigma_{\mathcal{D}}$, \\[2pt]
        $\mkern+36mu \displaystyle \frac{\partial u}{\partial \nu}=0$& &on $\Sigma_{\mathcal{N}}$,
        \end{tabular}
        \right.
\end{equation*}
where  $\Omega\subset\mathbb{R}^N$ is a smooth bounded domain,
$\frac{1}{2}<s<1$, $0<q<2_s^*-1$, $q\neq 1$, being $2_s^*=\frac{2N}{N-2s}$ the critical fractional Sobolev exponent,
$\lambda>0$, $\nu$ is the outwards normal to $\partial\Omega$,  $\Sigma_{\mathcal{D}}$,
$\Sigma_{\mathcal{N}}$ are smooth $(N-1)$-dimensional submanifolds
of $\partial\Omega$ such that
$\Sigma_{\mathcal{D}}\cup\Sigma_{\mathcal{N}}=\partial\Omega$,
$\Sigma_{\mathcal{D}}\cap\Sigma_{\mathcal{N}}=\emptyset$, and
$\Sigma_{\mathcal{D}}\cap\overline{\Sigma}_{\mathcal{N}}=\Gamma$ is
a smooth $(N-2)$-dimensional submanifold of $\partial\Omega$.\newline
In particular, we will prove that, for the sublinear case $0<q<1$, there exists at least two solutions for every $0<\lambda<\Lambda$ for certain $\Lambda\in\mathbb{R}$ while, for the superlinear case $1<q<2_s^*-1$,  we will prove that there exists at least one solution for every $\lambda>0$. We will also prove that solutions are bounded.
\end{abstract}

\section{Introduction}
In this work we study the existence of solutions to the following concave-convex critical problem involving the spectral fractional Laplacian,
\begin{equation}\label{p_lambda}\tag{$P_\lambda$}
        \left\{
        \begin{tabular}{lcl}
        $(-\Delta)^su=\lambda u^q+u^{2_s^*-1}$, & $u>0$ &in $\Omega$, \\
        $\mkern+21muB(u)=0$& &on $\partial\Omega$,
        \end{tabular}
        \right.
\end{equation}
where $\frac{1}{2}<s<1$, $0<q<2_s^*-1$, $q\neq 1$, being $2_s^*=\frac{2N}{N-2s}$ the critical fractional Sobolev exponent, $\lambda>0$, $\Omega\subset\mathbb{R}^N$ is a smooth bounded domain with mixed boundary conditions
\begin{equation}\label{mixed}
B(u)=u\rchi_{\Sigma_{\mathcal{D}}}+\frac{\partial u}{\partial
\nu}\rchi_{\Sigma_{\mathcal{N}}},
\end{equation}
where $\nu$ is the outwards normal to $\partial\Omega$, $\chi_A$ stands for the characteristic function of a set $A$, $\Sigma_{\mathcal{D}}$ and
$\Sigma_{\mathcal{N}}$ are smooth $(N-1)$-dimensional submanifolds of $\partial\Omega$ such that
$\Sigma_{\mathcal{D}}$ is a closed submanifold of $\partial\Omega$ with measure $|\Sigma_{\mathcal{D}}|=\alpha>0,\ \alpha\in(0,|\partial\Omega|)$; $\Sigma_{\mathcal{D}}\cap\Sigma_{\mathcal{N}}=\emptyset$,
$\Sigma_{\mathcal{D}}\cup\Sigma_{\mathcal{N}}=\partial\Omega$ and $\Sigma_{\mathcal{D}}\cap\overline{\Sigma}_{\mathcal{N}}=\Gamma$ is a smooth
$(N-2)$-dimensional submanifold. The range $\frac 12<s<1$ is the appropriate range for mixed boundary problems due to the natural embedding of the associated functional space, see Remark \ref{rem:rango_s}.\newline

Concave-convex critical problems are nowadays a well-known topic in the field of nonlinear PDE's as they have been broadly studied since the works of Brezis and Nirenberg (cf. \cite{Brezis1983a}) and Ambrosetti, Brezis and Cerami (cf. \cite{Ambrosetti1994}). The seminal paper \cite{Brezis1983a} deals with critical elliptic problems with Dirichlet boundary data for the classical Laplace operator $(s=1)$ and the exponent $q=1$. The authors introduced the main ideas to prove existence of solutions to critical problems with lower order perturbation terms. In \cite{Ambrosetti1994}, the authors analyzed the main effects concave-convex nonlinearities, $f_{\lambda}(u)=\lambda u^q+u^p$, $0<q<1<p\leq 2^*=\frac{N+2}{N-2}$, have on issues related to the
existence and multiplicity of solutions. Problems similar to \eqref{p_lambda} have been also studied for the $p$-Laplace operator (cf. \cite{Azorero1991}) or fully nonlinear operators (cf. \cite{Charro2009}) both considering Dirichlet boundary data. 

Results in these lines also hold when one considers the classical Laplace operator endowed now with mixed Dirichlet-Neumann boundary data. Lions, Pacella and Tricarico (cf. \cite{Lions1988}) analyzed the pure critical power problem $(\lambda=0)$ and the attainability of the associated Sobolev constant. In \cite{Grossi1990}, the corresponding mixed Brezis-Nirenberg problem was studied. Mixed concave-convex problems were addressed in \cite{Colorado2003} by considering subcritical nonlinearities and in \cite{Abdellaoui2006} by considering critical problems involving Caffarelli-Khon-Nirenberg weights. 

Regarding the fractional setting, the aim of this work is to extend to the mixed boundary data framework the results of \cite{Barrios2012}, where the Dirichlet problem ($|\Sigma_{\mathcal{N}}|=0$) corresponding to \eqref{p_lambda} was studied. Concave-convex critical fractional problems dealing with a different fractional operator, defined through a singular integral were studied in \cite{Barrios2015}. Fractional problems or nonlocal problems involving more general kernels and critical nonlinearities were studied in \cite{MolicaBisci2015, Servadei2013, Servadei2013a, Servadei2014, Servadei2015, Servadei2016}. 

Coming back to problem \eqref{p_lambda}, using a generalized Pohozaev identity (cf. \cite{Pohozaev1965}) it can be seen that, under Dirichlet boundary conditions ($|\Sigma_{\mathcal{N}}|=0$), problem \eqref{p_lambda} has no solution for $\lambda=0$ and $\Omega$ a star-shaped domain (cf. \cite{Braendle2013}). Similar non existence results based on Pohozaev type identities for mixed problems can be found in \cite{Lions1988} and \cite{Colorado2019}. Nevertheless, mixed boundary critical problems behave quite differently from critical Dirichlet problems and, taking the Dirichlet boundary part small enough, one can prove the existence of positive solution for the pure critical problem corresponding to $\lambda=0$, (cf. \cite{Lions1988}, \cite[Theorem 2.1]{Abdellaoui2006} and  \cite[Theorem 2.9]{Colorado2019}). 
Indeed, \eqref{p_lambda} with $\lambda\geq0$ and the exponent $q=1$ was analyzed in \cite{Colorado2019} where it was proved the following. 
\begin{theorem}\cite[Theorem 1.1]{Colorado2019}\label{fractBN}
Assume that  $q=1$, $\frac 12<s<1$ and $N\geq 4s$. Let $\lambda_{1,s}$ be
the first eigenvalue of the fractional operator $(-\Delta)^s$ with
mixed Dirichlet-Neumann boundary conditions \eqref{mixed}. Then, the
problem \eqref{p_lambda}
\begin{enumerate}
\item has no solution for $\lambda\geq\lambda_{1,s}$,
\item has at least one solution for $0<\lambda<\lambda_{1,s}$,
\item has at least one solution for $\lambda=0$ and $|\Sigma_{\mathcal{D}}|$ small enough.
\end{enumerate}
\end{theorem}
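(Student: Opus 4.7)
The plan is to adapt the classical Brezis-Nirenberg strategy to the mixed-boundary spectral fractional setting via the Caffarelli-Silvestre-type extension, recasting the nonlocal problem as a local degenerate elliptic problem on the cylinder $\Omega\times(0,\infty)$. For $q=1$, the associated energy functional on the extension space $X_s$ (functions whose trace vanishes on $\Sigma_{\mathcal{D}}$, with finite weighted Dirichlet energy on the cylinder and no boundary condition imposed on $\Sigma_{\mathcal{N}}\times(0,\infty)$) is
$$J_\lambda(u)=\frac{1}{2}\|u\|_s^{2}-\frac{\lambda}{2}\int_\Omega u^{2}\,dx-\frac{1}{2_s^*}\int_\Omega (u^+)^{2_s^*}\,dx,$$
and its nontrivial critical points correspond to solutions of \eqref{p_lambda}.

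For the non-existence part (1), let $\varphi_{1,s}>0$ be the first eigenfunction associated with $\lambda_{1,s}$; its existence, simplicity and positivity follow from Krein-Rutman applied to the compact resolvent of the mixed-boundary spectral fractional Laplacian. Testing the equation against $\varphi_{1,s}$ and integrating by parts yields
$$(\lambda_{1,s}-\lambda)\int_\Omega u\varphi_{1,s}\,dx=\int_\Omega u^{2_s^*-1}\varphi_{1,s}\,dx>0,$$
which forces $\lambda<\lambda_{1,s}$ and rules out any positive solution for $\lambda\ge\lambda_{1,s}$.

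For existence when $0<\lambda<\lambda_{1,s}$ (part 2), I would check the mountain pass geometry of $J_\lambda$: the variational characterization $\lambda_{1,s}=\inf\{\|u\|_s^2/\|u\|_{L^2}^2\}$ makes the quadratic part of $J_\lambda$ coercive, which combined with the fractional Sobolev embedding produces a positive barrier on a small sphere, while $J_\lambda(tv)\to-\infty$ for any fixed positive $v$ supplies the far endpoint. Compactness of Palais-Smale sequences is restored provided the mountain pass level $c_\lambda$ satisfies
$$c_\lambda<\frac{s}{N}S^{N/(2s)},$$
with $S$ the fractional Sobolev constant; this is established by testing $J_\lambda$ on truncated Aubin-Talenti bubbles concentrated at an interior point of $\Omega$. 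For part (3) with $\lambda=0$ there is no lower-order perturbation, so the strict inequality must come from the domain geometry: concentrating truncated bubbles around a point of $\Sigma_{\mathcal{N}}$, where the Neumann condition permits even reflection, produces test functions whose Sobolev quotient is essentially $S/2^{2s/N}<S$; the smallness of $|\Sigma_{\mathcal{D}}|$ is invoked to ensure the concentration point lies deep inside $\Sigma_{\mathcal{N}}$ so that truncation against $\Sigma_{\mathcal{D}}$ does not spoil the strict inequality.

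The main obstacle is obtaining the critical-level estimate with sharp constants. In (2) the hypothesis $N\geq 4s$ is precisely the threshold at which the $\lambda$-dependent $L^2$ term dominates the error from cutting off the Aubin-Talenti profile, strictly lowering $c_\lambda$ below $\frac{s}{N}S^{N/(2s)}$; smaller $N$ would require a different ansatz. In (3) the analogous estimate has to be carried out near a boundary point of $\Sigma_{\mathcal{N}}$, and the mixed trace theory (available precisely because $s>\frac{1}{2}$, as noted in Remark \ref{rem:rango_s}) is what justifies that the bubble sitting on the Neumann part belongs to $X_s$ and produces the expected asymptotics; controlling the interaction between the bubble and $\Sigma_{\mathcal{D}}$ while letting $|\Sigma_{\mathcal{D}}|\to 0$ is the delicate quantitative step.
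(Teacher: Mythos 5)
Your overall template (mountain pass, local Palais--Smale below a concentration threshold, Aubin--Talenti bubbles, non-existence via the first eigenfunction) is the right one and part (1) is fine, but the heart of the argument for (2) and (3) contains a genuine error: the Palais--Smale threshold you use, $\frac{s}{N}S^{N/(2s)}$, is the Dirichlet threshold and is \emph{not} the correct one in the mixed setting. In the mixed case a PS sequence can concentrate at a point of the Neumann boundary $\overline{\Sigma}_{\mathcal{N}}$ (Theorems \ref{concompact}, \ref{CCA}), where only ``half a bubble'' of Dirichlet energy pays for a full $L^{2_s^*}$ mass; the mass relation is $\mu_i\geq \widetilde{S}(\Sigma_{\mathcal{D}})\nu_i^{2/2_s^*}$ with $\widetilde{S}(\Sigma_{\mathcal{D}})\leq 2^{-2s/N}S(N,s)$, so the correct local PS threshold is $c^*_{\mathcal{D}-\mathcal{N}}=\frac{s}{N}[\widetilde{S}(\Sigma_{\mathcal{D}})]^{N/(2s)}\leq \frac12\cdot\frac{s}{N}S^{N/(2s)}$. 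This error propagates: concentrating a truncated Aubin--Talenti bubble at an \emph{interior} point, as you propose for (2), yields a mountain-pass level close to $\frac{s}{N}S^{N/(2s)}$, which is at least twice the true threshold no matter how you tune the $O(\varepsilon^{2s})$ correction from the $\lambda\|u\|_2^2$ term. To get below $c^*_{\mathcal{D}-\mathcal{N}}$ you must concentrate the bubble at a point $x_0\in\Sigma_{\mathcal{N}}$ (as in Lemma \ref{lemma_est_boundary}), so that both the gradient energy and the $L^{2_s^*}$ mass come out as roughly half of the full-space ones, giving a quotient near $2^{-2s/N}S$; only then does the $\lambda$-correction (under $N\geq 4s$) push the peak strictly below $c^*_{\mathcal{D}-\mathcal{N}}$.

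Part (3) also misidentifies the role of $|\Sigma_{\mathcal{D}}|$ small. One can always concentrate deep inside $\Sigma_{\mathcal{N}}$ irrespective of the size of $\Sigma_{\mathcal{D}}$, so ``ensuring the concentration point lies deep inside $\Sigma_{\mathcal{N}}$'' is not what smallness buys you. What actually happens (Remark \ref{remark_att}) is that $\widetilde{S}(\Sigma_{\mathcal{D}})\to 0$ as $|\Sigma_{\mathcal{D}}|\to 0^+$, hence for $|\Sigma_{\mathcal{D}}|$ small enough the \emph{strict} inequality $\widetilde{S}(\Sigma_{\mathcal{D}})<2^{-2s/N}S(N,s)$ holds, which by Theorem \ref{th_att} forces $\widetilde{S}(\Sigma_{\mathcal{D}})$ to be \emph{attained}; the minimizer, after normalization, is then a solution of the pure critical power problem with $\lambda=0$. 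Without a perturbation term, a bubble concentrated at the Neumann boundary has quotient exactly $\approx 2^{-2s/N}S$, so if $\widetilde{S}(\Sigma_{\mathcal{D}})$ equalled this value the path would sit precisely at the threshold and the mountain pass argument would be inconclusive. The needed strict drop comes from the domain ($|\Sigma_{\mathcal{D}}|$ small), not from the bubble ansatz.
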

Our aim is then to obtain existence results for problem \eqref{p_lambda} for the whole range of exponents $0<q<2_s^*-1,\, q\neq1$. Precisely we will prove the following two main results.

\begin{theorem}\label{sublinear}
Let $0<q<1$ and $\frac 12<s<1$. Then, there exists $0<\Lambda<\infty$ such that the problem \eqref{p_lambda}

\begin{enumerate}
\item has no solution for $\lambda>\Lambda$, 
\item has a minimal solution for any $0<\lambda<\Lambda$. Moreover, the family of minimal solutions is increasing with respect to $\lambda$,
\item has at least one solution for $\lambda=\Lambda$,
\item has at least two solutions for $0<\lambda<\Lambda$.
\end{enumerate}
\end{theorem}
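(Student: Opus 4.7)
The plan is to follow the Ambrosetti--Brezis--Cerami strategy, adapted to the spectral fractional Laplacian with mixed boundary conditions. Define
\[
\Lambda := \sup\{\lambda > 0 : \eqref{p_lambda} \text{ admits a (weak) solution}\}.
\]
To see that $\Lambda<\infty$ (item (1)) I would test the equation against the positive first eigenfunction $\varphi_{1,s}$ associated with $\lambda_{1,s}$ and use that, for $\lambda$ large, the inequality $\lambda t^{q}+t^{2_s^*-1}\geq \mu\, t$ holds on $(0,\infty)$ for some $\mu>\lambda_{1,s}$, which contradicts the eigenvalue characterization. To see $\Lambda>0$ I would use the sub/super-solution method: $\varepsilon\varphi_{1,s}$ is a subsolution for $\varepsilon$ small and, for $\lambda$ small, the solution of a suitable linear problem with the same mixed boundary data \eqref{mixed} provides a supersolution, so that monotone iteration between them yields a solution.

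For item (2), given $0<\lambda<\Lambda$, I pick $\lambda'$ with $\lambda<\lambda'<\Lambda$ for which a solution $u_{\lambda'}$ exists; since the nonlinearity is monotone increasing in $u$, $u_{\lambda'}$ is a strict supersolution at level $\lambda$, and monotone iteration from $0$ below $u_{\lambda'}$ produces the minimal solution $u_\lambda$. Comparison then gives that $\lambda\mapsto u_\lambda$ is pointwise increasing. For item (3), taking $\lambda_n\uparrow\Lambda$ and testing the equations against both $u_{\lambda_n}$ and $\varphi_{1,s}$ yields a uniform bound in the associated energy space; combined with the uniform $L^\infty$ bound established elsewhere in the paper, dominated convergence identifies the weak limit as a solution of \eqref{p_lambda} at $\lambda=\Lambda$, with positivity preserved by the strong maximum principle.

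The main obstacle is item (4), the existence of a second solution for every $0<\lambda<\Lambda$. Via the Cabr\'e--Tan / Stinga--Torrea extension to the half-cylinder $\mathcal{C}_\Omega=\Omega\times(0,\infty)$, with lateral Dirichlet condition on $\Sigma_{\mathcal{D}}\times[0,\infty)$ and lateral Neumann condition on $\Sigma_{\mathcal{N}}\times[0,\infty)$, the problem is recast as a local one driven by the weight $y^{1-2s}$. Translating by the minimal solution, i.e.\ writing $u=u_\lambda+v$ with $v\geq 0$, produces an energy functional for which the trivial critical point $v=0$ is a strict local minimum (via a truncation argument in the spirit of \cite{Ambrosetti1994,Barrios2012}). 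The mountain-pass geometry is then available, and a second critical point is obtained provided a local Palais--Smale condition holds below the threshold $\frac{s}{N}S(s,N)^{N/(2s)}$, where $S(s,N)$ is the best Sobolev constant of the extended problem.

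To bring the mountain-pass level strictly below this threshold I would concentrate Aubin--Talenti type extremals at a suitable point $x_0\in\Sigma_{\mathcal{N}}$ (or in the interior of $\Omega$) in order to avoid the penalty coming from $\Sigma_{\mathcal{D}}$, following the philosophy of \cite{Lions1988,Abdellaoui2006,Colorado2019} in the mixed setting. The delicate estimate is to control the mixed cross terms arising from $\lambda(u_\lambda+v)^{q+1}$; since $0<q<1$ the perturbation is subcritical and its interaction with the concentrating bubbles produces a strictly negative correction that pushes the energy level below the Palais--Smale threshold uniformly in $\lambda\in(0,\Lambda)$. I expect the technical heart of the proof to lie precisely in this concentration analysis on the mixed-boundary half-cylinder, including the interaction of the bubbles with the fixed minimal solution $u_\lambda$, together with the verification that the support of the translated functional can be chosen so that the comparison with the threshold is sharp.
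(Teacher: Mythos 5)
The global scheme you describe is the correct one and matches the paper's architecture, but three of your steps contain gaps or incorrect identifications.

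First, for item (3): testing $(P_{\lambda_n})$ against $u_{\lambda_n}$ and against $\varphi_{1,s}$ does not, by itself, produce a uniform $H^s_{\Sigma_\mathcal{D}}$-bound; the two resulting identities are not enough to control $\|u_{\lambda_n}\|$ because the critical term can grow. The missing ingredient is the \emph{stability inequality}: since $u_{\lambda_n}$ is a \emph{minimal} solution, the first eigenvalue of the linearized operator $(-\Delta)^s-\big(\lambda q u_{\lambda_n}^{q-1}+(2_s^*-1)u_{\lambda_n}^{2_s^*-2}\big)$ is nonnegative, giving $\int\big(|(-\Delta)^{s/2}v|^2-a_{\lambda_n}v^2\big)dx\geq 0$ for all admissible $v$. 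Choosing $v=u_{\lambda_n}$ there and combining with the equation and the energy is exactly what yields $I_{\lambda_n}(u_{\lambda_n})<0$ and boundedness, as in Ambrosetti--Brezis--Cerami.

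Second, for item (4), the Palais--Smale threshold you quote is wrong. In the mixed Dirichlet--Neumann setting the relevant constant is the \emph{mixed} Sobolev constant $\widetilde{S}(\Sigma_\mathcal{D})$ (defined over $H^s_{\Sigma_\mathcal{D}}(\Omega)$), and the local compactness holds below $c^*_{\mathcal{D}-\mathcal{N}}=\frac{s}{N}\widetilde{S}(\Sigma_\mathcal{D})^{N/(2s)}$. Since $\widetilde{S}(\Sigma_\mathcal{D})\leq 2^{-2s/N}S(N,s)$, this level is at most half the Dirichlet level $\frac{s}{N}S(N,s)^{N/(2s)}$. Your parenthetical alternative of concentrating a bubble in the interior of $\Omega$ therefore fails: an interior bubble carries the full Dirichlet Sobolev energy and cannot be brought below $c^*_{\mathcal{D}-\mathcal{N}}$. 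One must concentrate at a point of $\Sigma_\mathcal{N}$ precisely because the half-ball truncation halves the gradient energy and matches the reduced threshold; alternatively, if $\widetilde{S}(\Sigma_\mathcal{D})<2^{-2s/N}S(N,s)$ the constant is attained, and the extremal itself gives a path strictly below the threshold without any concentration.

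Third, the ``delicate estimate'' is not the interaction of the bubble with the concave term $\lambda(u_\lambda+v)^{q+1}$. In the translated problem the concave contribution is simply nonnegative and can be dropped; the strictly negative correction that pushes the mountain-pass level below $c^*_{\mathcal{D}-\mathcal{N}}$ comes from the interaction of the concentrating bubble with the \emph{minimal solution through the critical power}, via the elementary inequality $(a+b)^p\geq a^p+b^p+\mu a^{p-1}b$ applied to $p=2_s^*$, which produces a term of order $\varepsilon^{2s}$ (or $\varepsilon^{2s}\log(1/\varepsilon)$ when $N=4s$) dominating the error $O\big((\varepsilon/\rho)^{N-2s}\big)$ from the truncation. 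Finally, although you correctly invoke a ``truncation argument in the spirit of \cite{Ambrosetti1994,Barrios2012}'' to show $v=0$ is a local minimum, in the mixed setting there is no Hopf lemma available in the usual form; the paper relies on the recent Strong Maximum Principle of \cite{Ortega2021} to obtain the $\mathfrak{C}_v(\Omega)$-separation $u_{\lambda_0}+\varepsilon v\leq u_{\lambda_1}\leq u_{\lambda_2}-\varepsilon v$ between ordered minimal solutions, which is the crucial new technical input you omit.
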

\begin{theorem}\label{superlinear}
Let $1<q<2_s^*-1$, $\frac{1}{2}<s<1$ and $N>2s\left(1+\frac{1}{q}\right)$. Then, the problem \eqref{p_lambda} has at least one solution for any $\lambda>0$.

\end{theorem}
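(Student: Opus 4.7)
The plan is to obtain a positive solution of \eqref{p_lambda} as a non-trivial critical point of the energy functional associated to the Caffarelli--Silvestre--Stinga--Torrea extension, via the Mountain Pass Theorem of Ambrosetti--Rabinowitz. Following \cite{Barrios2012,Colorado2019}, $u$ solves \eqref{p_lambda} with $u>0$ if and only if its $s$-harmonic extension $w$ to the half-cylinder $\mathcal{C}_\Omega:=\Omega\times(0,\infty)$ is a positive critical point of
$$
J_\lambda(w)=\frac{\kappa_s}{2}\int_{\mathcal{C}_\Omega} y^{1-2s}|\nabla w|^2\,dx\,dy-\frac{\lambda}{q+1}\int_\Omega w_+^{q+1}\,dx-\frac{1}{2_s^*}\int_\Omega w_+^{2_s^*}\,dx,
$$
defined on the natural Hilbert space $\mathcal{H}^s_{\Sigma_\mathcal{D}}(\mathcal{C}_\Omega)$ of finite-energy functions that vanish on the Dirichlet lateral part $\Sigma_\mathcal{D}\times(0,\infty)$. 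The Neumann condition on $\Sigma_\mathcal{N}$ arises naturally in the variational formulation, and positivity of any non-trivial critical point will follow by testing against $w_-$ together with a strong maximum principle for the extended problem.

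Since $1<q<2_s^*-1$, both nonlinear terms in $J_\lambda$ are strictly superlinear at the origin. Combined with the trace-Sobolev embedding $\mathcal{H}^s_{\Sigma_\mathcal{D}}(\mathcal{C}_\Omega)\hookrightarrow L^{2_s^*}(\Omega)$ (available because $s>1/2$, cf.\ Remark~\ref{rem:rango_s}), this yields $J_\lambda(w)\geq\alpha\|w\|^2$ whenever $\|w\|=\rho$ is small, while for any non-trivial $w_0\geq 0$ one has $J_\lambda(tw_0)\to-\infty$ as $t\to+\infty$ (as $q+1>2$ and $2_s^*>q+1$). Hence $J_\lambda$ has mountain pass geometry for every $\lambda>0$. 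A standard concentration-compactness analysis of Palais--Smale sequences, adapted to the mixed-boundary extended problem in the spirit of \cite{Barrios2012,Colorado2019}, then shows that $(PS)_c$ holds for every
$$
c<c^*:=\frac{s}{N}\,S(s,N,\Sigma_\mathcal{N})^{N/(2s)},
$$
where $S(s,N,\Sigma_\mathcal{N})$ is the best constant for the fractional Sobolev trace inequality on $\mathcal{H}^s_{\Sigma_\mathcal{D}}(\mathcal{C}_\Omega)$.

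The decisive step is thus to prove that the mountain pass level $c_\lambda$ is strictly below $c^*$ for every $\lambda>0$. I would test $J_\lambda$ along the ray $t\mapsto tU_\varepsilon$, where $U_\varepsilon$ is the (suitably truncated) $s$-harmonic extension of the fractional Aubin--Talenti extremal $\varepsilon^{(N-2s)/2}(\varepsilon^2+|x-x_0|^2)^{-(N-2s)/2}$, with $x_0$ chosen so that the asymptotic attainment of $S(s,N,\Sigma_\mathcal{N})$ is realized (an interior point of $\Omega$ or a point on $\Sigma_\mathcal{N}$, according to the geometry). A direct expansion yields
$$
\sup_{t\geq 0}J_\lambda(tU_\varepsilon)\leq c^* + O(\varepsilon^{N-2s}) - C\lambda\int_\Omega U_\varepsilon^{q+1}\,dx,
$$
and the dimensional hypothesis $N>2s(1+1/q)$ is precisely what guarantees $\int_\Omega U_\varepsilon^{q+1}\sim \varepsilon^{N-(q+1)(N-2s)/2}$ without logarithmic corrections, with the right exponent for the $\lambda$-perturbation to strictly beat the Sobolev error for small $\varepsilon$, \emph{uniformly in} $\lambda>0$; in sharp contrast with the sublinear case, no smallness on $\lambda$ is required. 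With $c_\lambda<c^*$ established, the mountain pass theorem produces a non-trivial critical point $w_\lambda$ at level $c_\lambda\in(0,c^*)$, whose positivity and the desired solution $u_\lambda=w_\lambda(\cdot,0)$ follow as indicated.

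The main obstacle is the rigorous verification of this test-function estimate. The mixed boundary subtly affects the asymptotic expansions of $\|U_\varepsilon\|^2$ and $\int U_\varepsilon^{2_s^*}$, the concentration point $x_0$ must be chosen compatibly with the (asymptotic) attainment of $S(s,N,\Sigma_\mathcal{N})$, and the interplay between the three exponents $2$, $q+1$, and $2_s^*$ has to be tracked carefully; it is exactly at this point that the dimensional restriction $N>2s(1+1/q)$ enters. The remaining ingredients, i.e.\ regularity and strict positivity of $u_\lambda$, will follow by standard bootstrap and maximum principle arguments for the spectral fractional Laplacian in the mixed-boundary setting, as in \cite{Barrios2012,Colorado2019}.
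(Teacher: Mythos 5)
Your proposal follows essentially the same route as the paper: pass to the Caffarelli--Silvestre extension, verify the mountain pass geometry of the extended functional (Proposition \ref{prop:mpt_geom}), establish the local Palais--Smale condition below the critical level $c^*_{\mathcal{D-N}}=\frac{s}{N}[\widetilde{S}(\Sigma_{\mathcal{D}})]^{N/(2s)}$ (the convex-case adaptation of Lemma \ref{lem:PS}), and push the mountain pass level strictly below $c^*_{\mathcal{D-N}}$ by testing along a ray of truncated, concentrated Aubin--Talenti extensions, with the $\varepsilon$-expansion of the $L^{q+1}$-norm (estimate \eqref{estim}, requiring $N>2s(1+1/q)$) beating the Sobolev error uniformly in $\lambda$. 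This is exactly the strategy of Section~\ref{Section_convex}.

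One point where the paper is more precise than your sketch, and which is worth making explicit: the choice of test path depends on a dichotomy governed by Proposition~\ref{prop_cota}. If $\widetilde{S}(\Sigma_{\mathcal{D}})<2^{-2s/N}S(N,s)$, then $\widetilde{S}(\Sigma_{\mathcal{D}})$ is attained (Theorem~\ref{th_att}) and one simply tests along $t\,E_s[\tilde u]/\|\tilde u\|_{2_s^*}$ for an extremal $\tilde u$; the Sobolev error vanishes identically and the strict inequality is immediate for every $\lambda>0$. Only when $\widetilde{S}(\Sigma_{\mathcal{D}})=2^{-2s/N}S(N,s)$ does one need the concentration argument, and then (by Theorem~\ref{CCA}) the concentration point must lie on $\overline\Sigma_{\mathcal{N}}$ -- interior concentration would only reach the larger threshold $\frac{s}{N}S(N,s)^{N/(2s)}$ and would not be sharp. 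Your phrase ``an interior point of $\Omega$ or a point on $\Sigma_{\mathcal{N}}$, according to the geometry'' should be replaced by: a point of $\Sigma_{\mathcal{N}}$, precisely because half-space concentration is what produces the factor $2^{-2s/N}$ matching $\widetilde{S}(\Sigma_{\mathcal{D}})$ in this borderline case (Lemma~\ref{lemma_est_boundary}). With that correction, the proposal matches the paper's proof in substance.
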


The proof of Theorem \ref{sublinear} follows from nowadays well-known arguments. The existence of a positive minimal solution follows by using sub and supersolution, comparison and iterative arguments. To prove the existence of a second positive solution we will need to use a recently proved Strong Maximum Principle for mixed fractional problems (cf. \cite{Ortega2021}), from which we will obtain a separation result (see Lemma \ref{lem:separation} below) that implies that the minimal solution is indeed a minimum of the energy functional associated to \eqref{p_lambda}. This step is fundamental to prove \textit{(4)} in Theorem \ref{sublinear} since it allows us to use a Mountain Pass type argument. Due to the lack of compactness of the Sobolev embedding at the critical exponent $2_s^*$, we prove next that a local PS condition holds below a certain critical level $c_{\mathcal{D}-\mathcal{N}}^*$. We conclude by constructing paths whose energy is below the critical level $c_{\mathcal{D}-\mathcal{N}}^*$. At this point we have two options as the mixed pure critical problem can have solution for Dirichlet boundary size small enough (Theorem \ref{fractBN} - \textit{(3)} above). If the Sobolev constant associated to \eqref{p_lambda} (see Definition \ref{defi_sob_const} below) is attained we use the associated extremal functions to find paths with energy below the critical level $c_{\mathcal{D}-\mathcal{N}}^*$. Otherwise, this step is accomplished by the use of appropriate truncations of the extremal functions of the fractional Sobolev inequality.\newline
Most of the arguments of the concave case $0<q<1$ also works for the convex case $q>1$ so we will only indicate the main steps to prove Theorem \ref{superlinear}.

\textbf{Organization of the paper:} In Section \ref{Functionalsetting} we introduce the appropriate functional setting and some results for a Sobolev-like constant associated to \eqref{p_lambda} useful in the sequel. In Section \ref{Section_concave} we will address the proof of Theorem \ref{sublinear}. We finish with the proof of Theorem \ref{superlinear} in Section \ref{Section_convex}.

\section{Functional setting and preliminaries}\label{Functionalsetting}

As far as the fractional Laplace operator is concerned, we recall its definition given through the spectral decomposition. Let
$(\varphi_i,\lambda_i)$ be the eigenfunctions (normalized with respect to the $L^2(\Omega)$-norm) and the eigenvalues of
$(-\Delta)$ equipped with homogeneous mixed Dirichlet--Neumann boundary data, respectively. Then, $(\varphi_i,\lambda_i^s)$ are the eigenfunctions and eigenvalues of the fractional operator
$(-\Delta)^s$, where, given $\displaystyle u_i(x)=\sum_{j\geq1}\langle u_i,\varphi_j\rangle\varphi_j$, $i=1,2$, it holds
\begin{equation*}
\langle(-\Delta)^s u_1, u_2\rangle=\sum_{j\ge 1} \lambda_j^s\langle u_1,\varphi_j\rangle \langle u_2,\varphi_j\rangle,
\end{equation*}
i.e., the action of the fractional operator on a smooth function $u$ is given by
\begin{equation*}
(-\Delta)^su=\sum_{j\ge 1} \lambda_j^s\langle u,\varphi_j\rangle\varphi_j.
\end{equation*}
As a consequence, the fractional Laplace operator $(-\Delta)^s$ is well defined through its spectral decomposition in the
following space of functions that vanish on $\Sigma_{\mathcal{D}}$,
\begin{equation*}
H_{\Sigma_{\mathcal{D}}}^s(\Omega)=\left\{u=\sum_{j\ge 1} a_j\varphi_j\in L^2(\Omega):\ u=0\ \text{on }\Sigma_{\mathcal{D}},\ \|u\|_{H_{\Sigma_{\mathcal{D}}^s}(\Omega)}^2=
\sum_{j\ge 1} a_j^2\lambda_j^s<\infty\right\}.
\end{equation*}
For $u\in H_{\Sigma_{\mathcal{D}}}^s(\Omega)$, it follows that $\displaystyle \|u\|_{H_{\Sigma_{\mathcal{D}}}^s(\Omega)}=\left\|(-\Delta)^{\frac{s}{2}}u\right\|_{L^2(\Omega)}$.
\begin{remark}\label{rem:rango_s}
As it is proved in \cite[Theorem 11.1]{Lions1972}, if $0<s\le \frac{1}{2}$ then $H_0^s(\Omega)=H^s(\Omega)$ and, therefore, also
$H_{\Sigma_{\mathcal{D}}}^s(\Omega)=H^s(\Omega)$, while for $\frac 12<s<1$, $H_0^s(\Omega)\subsetneq H^s(\Omega)$. Hence,
the range $\frac 12<s<1$ guarantees that $H_{\Sigma_{\mathcal{D}}}^s(\Omega)\subsetneq H^s(\Omega)$ and it provides us with the appropriate functional space for the mixed boundary problem \eqref{p_lambda}.
\end{remark}

This definition of the fractional powers of the Laplace operator allows us to integrate by parts in the proper spaces, so that a natural definition of weak solution to problem \eqref{p_lambda} is the following.
\begin{definition}
We say that $u\in H_{\Sigma_{\mathcal{D}}}^s(\Omega)$ is a weak solution to \eqref{p_lambda} if
\begin{equation}\label{energy_sol}
\int_{\Omega}(-\Delta)^{s/2}u(-\Delta)^{s/2}\psi dx=\int_{\Omega}\left(\lambda u^q+u^{2_s^*-1}\right)\psi dx,\ \ \text{for all}\ \psi\in H_{\Sigma_{\mathcal{D}}}^s(\Omega).
\end{equation}
\end{definition}
The right-hand side of \eqref{energy_sol} is well defined because of the embedding $H_{\Sigma_{\mathcal{D}}}^s(\Omega)\hookrightarrow L^{2_s^*}(\Omega)$, so $u\in H_{\Sigma_{\mathcal{D}}}^s(\Omega)$ then $\lambda u^q+u^{2_s^*-1}\in L^{\frac{2N}{N+2s}}\hookrightarrow \left(H_{\Sigma_{\mathcal{D}}}^s(\Omega)\right)'$.\newline
The energy functional associated to problem \eqref{p_lambda} is
\begin{equation}\label{energy_functional}
I_\lambda(u)=\frac{1}{2}\int_{\Omega}|(-\Delta)^{s/2}u|^2dx-\frac{\lambda}{q+1}\int_{\Omega}|u|^{q+1}dx-\frac{N-2s}{2N}\int_{\Omega}|u|^{\frac{2N}{N-2s}}dx.
\end{equation}
$I_\lambda$ is well defined in
$H_{\Sigma_{\mathcal{D}}}^s(\Omega)$ and positive critical points of $I_\lambda$ correspond to solutions of
\eqref{p_lambda}.\newline

Due to the nonlocal nature of the fractional operator $(-\Delta)^s$ some difficulties arise when one tries to obtain an explicit
expression of the action of the fractional Laplacian on a given function. In order to overcome these difficulties, we use the ideas by Caffarelli and Silvestre (see \cite{Caffarelli2007}) together with those of \cite{Braendle2013,Cabre2010,Capella2011}  to give an equivalent definition of the operator $(-\Delta)^s$ by means of an auxiliary problem that we introduce next.

Given a domain $\Omega \subset \mathbb{R}^N$, we set the cylinder $\mathscr{C}_{\Omega}=\Omega\times(0,\infty)\subset\mathbb{R}_+^{N+1}$. We denote by $(x,y)$ those points that belong to $\mathscr{C}_{\Omega}$ and by $\partial_L\mathscr{C}_{\Omega}=\partial\Omega\times[0,\infty)$ the lateral boundary of the cylinder. Let us also denote by  $\Sigma_{\mathcal{D}}^*=\Sigma_{\mathcal{D}}\times[0,\infty)$ and $\Sigma_{\mathcal{N}}^*=\Sigma_{\mathcal{N}}\times[0,\infty)$ as well as $\Gamma^*=\Gamma\times[0,\infty)$. It is clear that, by construction,
\begin{equation*}
\Sigma_{\mathcal{D}}^*\cap\Sigma_{\mathcal{N}}^*=\emptyset\,, \quad \Sigma_{\mathcal{D}}^*\cup\Sigma_{\mathcal{N}}^*=\partial_L\mathscr{C}_{\Omega} \quad \mbox{and} \quad \Sigma_{\mathcal{D}}^*\cap\overline{\Sigma_{\mathcal{N}}^*}=\Gamma^*\,.
\end{equation*}
 Then, given a function $u\in H_{\Sigma_{\mathcal{D}}}^s(\Omega)$ we define its $s$-harmonic extension, $w (x,y)=E_{s}[u(x)]$, as the solution to the problem
\begin{equation*}
        \left\{
        \begin{array}{rlcl}
           -\text{div}(y^{1-2s}\nabla w )&\!\!\!\!=0  & & \mbox{ in } \mathscr{C}_{\Omega} , \\
          B(w )&\!\!\!\!=0   & & \mbox{ on } \partial_L\mathscr{C}_{\Omega} , \\
          w(x,0)&\!\!\!\!=u(x)  & &  \mbox{ on } \Omega\times\{y=0\} ,
        \end{array}
        \right.
\end{equation*}
where
\begin{equation*}
B(w)=w\rchi_{\Sigma_{\mathcal{D}}^*}+\frac{\partial w}{\partial \nu}\rchi_{\Sigma_{\mathcal{N}}^* },
\end{equation*}
being $\nu$, with an abuse of notation\footnote{Let $\nu$ be the outward normal to $\partial\Omega$ and $\nu_{(x,y)}$ the outward normal to $\mathscr{C}_{\Omega}$ then $\nu_{(x,y)}=(\nu,0)$, $y>0$.}, the outward normal to $\partial_L\mathscr{C}_{\Omega}$. The extension function belongs to the space
\begin{equation*}
\mathcal{X}_{\Sigma_{\mathcal{D}}}^s(\mathscr{C}_{\Omega}) : =\overline{\mathcal{C}_{0}^{\infty}
((\Omega\cup\Sigma_{\mathcal{N}})\times[0,\infty))}^{\|\cdot\|_{\mathcal{X}_{\Sigma_{\mathcal{D}}}^s(\mathscr{C}_{\Omega})}},
\end{equation*}
where we define
\begin{equation}\label{norma}
\|\cdot\|_{\mathcal{X}_{\Sigma_{\mathcal{D}}}^s(\mathscr{C}_{\Omega})}^2:=\kappa_s\int_{\mathscr{C}_{\Omega}}\mkern-5mu y^{1-2s} |\nabla (\cdot)|^2dxdy,
\end{equation}
with $\kappa_s=2^{2s-1}\frac{\Gamma(s)}{\Gamma(1-s)}$ being $\Gamma(s)$ the Gamma function.

The space $\mathcal{X}_{\Sigma_{\mathcal{D}}}^s(\mathscr{C}_{\Omega})$ is a Hilbert space equipped with the norm
$\|\cdot\|_{\mathcal{X}_{\Sigma_{\mathcal{D}}}^s(\mathscr{C}_{\Omega})}$ which is induced by the scalar product
\begin{equation*}
\langle w, z \rangle_{\mathcal{X}_{\Sigma_{\mathcal{D}}}^s(\mathscr{C}_{\Omega})}=\kappa_s
\int_{\mathscr{C}_{\Omega}}y^{1-2s} \langle\nabla w,\nabla z\rangle dxdy.
\end{equation*}
Moreover, the following inclusions are satisfied,
\begin{equation} \label{embedd}
\mathcal{X}_0^s(\mathscr{C}_{\Omega}) \subsetneq \mathcal{X}_{\Sigma_{\mathcal{D}}}^s(\mathscr{C}_{\Omega}) \subsetneq \mathcal{X}^s(\mathscr{C}_{\Omega}),
\end{equation}
being  $\mathcal{X}_0^s(\mathscr{C}_{\Omega})$ the space of functions that belongs to $\mathcal{X}^s(\mathscr{C}_{\Omega})\equiv H^1(\mathscr{C}_{\Omega},y^{1-2s}dxdy)$ and vanish on the lateral boundary of $\mathscr{C}_{\Omega}$, denoted by $\partial_L\mathscr{C}_{\Omega}$.

The key point of the extension function is that it is related to the fractional Laplacian of the original function through the formula
\begin{equation*}
\frac{\partial w}{\partial \nu^s}:= -\kappa_s \lim_{y\to 0^+} y^{1-2s}\frac{\partial w}{\partial y}=(-\Delta)^su(x).
\end{equation*}

By the above arguments, we can reformulate problem \eqref{p_lambda} in terms of the extension problem as follows
\begin{equation}\label{extension_problem}
    \left\{
        \begin{array}{rlcl}
           -\text{div}(y^{1-2s}\nabla w )&\!\!\!\!=0  & & \mbox{ in } \mathscr{C}_{\Omega} , \\
          B(w)&\!\!\!\!=0   & & \mbox{ on } \partial_L\mathscr{C}_{\Omega} , \\
         \frac{\partial w}{\partial \nu^s}&\!\!\!\!=\lambda w^q+w^{2_s^*-1} & &  \mbox{ on } \Omega\times\{y=0\}.
        \end{array}
        \right.
        \tag{$P_{\lambda}^*$}
\end{equation}
A weak solution to \eqref{extension_problem} is a function $w\in \mathcal{X}_{\Sigma_{\mathcal{D}}}^s(\mathscr{C}_{\Omega})$ such that, for all $\varphi\in\mathcal{X}_{\Sigma_{\mathcal{D}}}^s(\mathscr{C}_{\Omega})$,
\begin{equation*}
\kappa_s\int_{\mathscr{C}_{\Omega}} y^{1-2s}\langle\nabla w,\nabla\varphi \rangle dxdy=\int_{\Omega} \left(\lambda w^q(x,0)+w^{2_s^*-1}(x,0)\right)\varphi(x,0)dx.
\end{equation*}
Given $w\in \mathcal{X}_{\Sigma_{\mathcal{D}}}^s(\mathscr{C}_{\Omega})$ a
solution to problem \eqref{extension_problem} the function
$u(x)=Tr[w](x)=w(x,0)$ belongs to
$H_{\Sigma_{\mathcal{D}}}^s(\Omega)$ and it is a solution to
problem \eqref{p_lambda} and vice versa, if $u\in
H_{\Sigma_{\mathcal{D}}}^s(\Omega)$ is a solution to \eqref{p_lambda}
then $w=E_s[u]\in
\mathcal{X}_{\Sigma_{\mathcal{D}}}^s(\mathscr{C}_{\Omega})$ is a
solution to \eqref{extension_problem}. Thus, both formulations are equivalent and the {\it extension operator}
$$
E_s: H_{\Sigma_{\mathcal{D}}}^s(\Omega) \to \mathcal{X}_{\Sigma_{\mathcal{D}}}^s(\mathscr{C}_{\Omega}),
$$
allows us to switch between each other. Moreover, according to \cite{Braendle2013, Caffarelli2007}, due to the choice of the constant $\kappa_s$, the extension operator $E_s$ is an isometry, i.e.,
\begin{equation}\label{norma2}
\|E_s[\varphi] \|_{\mathcal{X}_{\Sigma_{\mathcal{D}}}^s(\mathscr{C}_{\Omega})}=
\|\varphi \|_{H_{\Sigma_{\mathcal{D}}}^s(\Omega)}\quad \text{for all}\ \varphi\in H_{\Sigma_{\mathcal{D}}}^s(\Omega).
\end{equation}

Finally, the energy functional
associated to problem $(P_{\lambda}^*)$ is
\begin{equation}\label{extensionfunctional}
J_\lambda(w)=\frac{\kappa_s}{2}\int_{\mathscr{C}_{\Omega}}y^{1-2s}|\nabla w|^2dxdy-\frac{\lambda}{q+1}\int_{\Omega}|w|^{q+1}dx-\frac{N-2s}{2N}\int_{\Omega}|w|^{2_s^*}dx.
\end{equation}
Plainly, (positive) critical points of $J_\lambda$ in
$\mathcal{X}_{\Sigma_{\mathcal{D}}}^s(\mathscr{C}_{\Omega})$
correspond to (positive) critical points of $I_\lambda$ in
$H_{\Sigma_{\mathcal{D}}}^s(\Omega)$. Moreover, minima of $J_\lambda$ also
correspond to minima of $I_\lambda$. The proof of this fact is similar to
the one of the Dirichlet case, (cf. \cite[Proposition 3.1]{Barrios2012}).\newline

When one considers Dirichlet boundary conditions the following \textit{trace inequality} holds (cf. \cite[Theorem 4.4]{Braendle2013}): there exists $C=C(N,s,r,|\Omega|)>0$ such that, for all $z\in\mathcal{X}_0^s(\mathscr{C}_{\Omega})$,
\begin{equation}\label{sobext}
\kappa_s\int_{\mathscr{C}_{\Omega}}y^{1-2s}|\nabla z(x,y)|^2dxdy\geq C\left(\int_{\Omega}|z(x,0)|^rdx\right)^{\frac{2}{r}},
\end{equation}
for $1\leq r\leq 2_s^*,\ N>2s$. Because of \eqref{norma2} the trace inequality \eqref{sobext} is equivalent to the fractional Sobolev inequality,
\begin{equation}\label{sobolev}
  C\left(\int_{\Omega}|v|^rdx\right)^{\frac{2}{r}}\leq \int_{\Omega}|(-\Delta)^{\frac{s}2}v|^2dx
\qquad\text{for all } v\in H_{0}^s(\Omega),\ 1\leq r\leq 2^*_s ,\ N>2s.
\end{equation}
If $r=2_s^*$ the best constant in \eqref{sobolev} (and, thanks to \eqref{norma2}, in \eqref{sobext}), namely the fractional Sobolev constant, denoted by $S(N,s)$, is independent of the domain $\Omega$ and its exact value is given by 
\begin{equation*}
S(N,s)=2^{2s}\pi^s\frac{\Gamma\left(\frac{N+2s}{2}\right)}{\Gamma\left(\frac{N-2s}{2}\right)}\left(\frac{\Gamma(\frac{N}{2})}{\Gamma(N)}\right)^{\frac{2s}{N}}.
\end{equation*}
Since it is not achieved in any bounded domain we have that
\begin{equation*}
\kappa_s\int_{\mathbb{R}_{+}^{N+1}}\!\!y^{1-2s}|\nabla z(x,y)|^2dxdy\geq
S(N,s)\left(\int_{\mathbb{R}^N}|z(x,0)|^{\frac{2N}{N-2s}}dx\right)^{\frac{N-2s}{N}}
\!,\  \forall z\in \mathcal{X}^s(\mathbb{R}_{+}^{N+1}),
\end{equation*}
where
$\mathcal{X}^s(\mathbb{R}_{+}^{N+1})=\overline{\mathcal{C}^{\infty}(\mathbb{R}^N\times[0,\infty))}^{\|\cdot\|_{\mathcal{X}^s(\mathbb{R}_{+}^{N+1})}}$,
with $\|\cdot\|_{\mathcal{X}^s(\mathbb{R}_{+}^{N+1})}$ defined as
\eqref{norma} replacing $\mathscr{C}_\Omega$ by
$\mathbb{R}_{+}^{N+1}$. Indeed, in the whole space the latter inequality is achieved
for the family $w_{\varepsilon}= E_s[u_{\varepsilon}]$,
\begin{equation}\label{eq:sob_extremal}
u_{\varepsilon}(x)=\frac{\varepsilon^{\frac{N-2s}{2}}}{(\varepsilon^2+|x|^2)^{\frac{N-2s}{2}}},
\end{equation}
with arbitrary $\varepsilon>0$, (cf. \cite{Braendle2013}).

When mixed boundary conditions are considered the situation is quite similar since the Dirichlet condition is imposed on a set $\Sigma_{\mathcal{D}} \subset \partial \Omega$ such that $0<|\Sigma_{\mathcal{D}}|<|\partial\Omega|$. 

\begin{definition}\label{defi_sob_const}The Sobolev constant relative to the Dirichlet boundary $\Sigma_{\mathcal{D}}$ is defined by
\begin{equation*}
\widetilde{S}(\Sigma_{\mathcal{D}})=\inf_{\substack{u\in
H_{\Sigma_{\mathcal{D}}}^s(\Omega)\\ u\not\equiv
0}}\frac{\|u\|_{H_{\Sigma_{\mathcal{D}}}^s(\Omega)}^2}{\|u\|_{L^{2_s^*}(\Omega)}^2}.
\end{equation*}
Since the extension function minimizes the
$\|\cdot\|_{\mathcal{X}_{\Sigma_{\mathcal{D}}}^s(\mathscr{C}_{\Omega})}$
norm along all functions with the same trace on $\{y=0\}$ (cf. \cite[Lemma 2.4]{Colorado2019}), by \eqref{norma2}, the constant $\widetilde{S}(\Sigma_{\mathcal{D}})$ is equivalently defined by,
\begin{equation*}
\widetilde{S}(\Sigma_{\mathcal{D}})=\inf_{\substack{w\in \mathcal{X}_{\Sigma_{\mathcal{D}}}^s(\mathscr{C}_{\Omega})\\
w\not\equiv
0}}\frac{\|w\|_{\mathcal{X}_{\Sigma_{\mathcal{D}}}^s(\mathscr{C}_{\Omega})}^2}{\|w(\cdot,0)\|_{L^{2_s^*}(\Omega)}^2}.
\end{equation*}
\end{definition}
As mentioned above, since the Dirichlet condition is imposed on a set $\Sigma_{\mathcal{D}} \subset \partial \Omega$ such that $0<|\Sigma_{\mathcal{D}}|<|\partial\Omega|$, by the inclusions \eqref{embedd}, we have 
\begin{equation}\label{const}
0<\widetilde{S}(\Sigma_{\mathcal{D}})\vcentcolon=\inf_{\substack{u\in H_{\Sigma_{\mathcal{D}}}^s(\Omega)\\ u\not\equiv 0}}\frac{\|u\|_{H_{\Sigma_{\mathcal{D}}}^s(\Omega)}^2}{
\|u\|_{L^{2_s^*}(\Omega)}^2}
<\inf_{\substack{u\in H_{0}^s(\Omega)\\ u\not\equiv 0}}\frac{\|u\|_{H_{0}^s(\Omega)}^2}{\|u\|_{L^{2_s^*}(\Omega)}^2}.
\end{equation}
As we will see the constant $\widetilde{S}(\Sigma_{\mathcal{D}})$ plays a main role in the existence issues of problem \eqref{p_lambda}, i.e., $\widetilde{S}(\Sigma_{\mathcal{D}})$ is to mixed problems what Sobolev constant $S(N,s)$ is to Dirichlet problems.
\begin{remark}\label{remark_att}
Due to the spectral definition of the fractional operator, using H\"older's inequality we deduce $\widetilde{S}(\Sigma_{\mathcal{D}})\leq|\Omega|^{\frac{2s}{N}}\lambda_1^s(\alpha)$, with $\lambda_1(\alpha)$ the first eigenvalue of the Laplace operator endowed with mixed boundary conditions on the sets  $\Sigma_{\mathcal{D}}=\Sigma_{\mathcal{D}}(\alpha)$ and $\Sigma_{\mathcal{N}}= \Sigma_{\mathcal{N}}(\alpha)$. Since $\lambda_1(\alpha)\to0$ as $\alpha\to0^+$, (cf. \cite[Lemma 4.3]{Colorado2003}), we have $\widetilde{S}(\Sigma_{\mathcal{D}})\to0$ as $\alpha\to0^+$.
\end{remark}
Gathering together \eqref{const} and \eqref{norma2} it follows that, for all $\varphi \in \mathcal{X}_{\Sigma_{\mathcal{D}}}^s(\mathscr{C}_{\Omega})$,
\begin{equation*}
\widetilde{S}(\Sigma_{\mathcal{D}})\left(\int_\Omega |\varphi(x,0)|^{2^*_s} dx\right)^{\frac{2}{2^*_s}}\leq\|\varphi(x,0)\|_{H_{\Sigma_{\mathcal{D}}}^s(\Omega)}^2=\|E_s[\varphi(x,0)]\|_{\mathcal{X}_{\Sigma_{\mathcal{D}}}^s(\mathscr{C}_{\Omega})}^2.
\end{equation*}
This Sobolev--type inequality provides us with a trace inequality adapted to the mixed boundary data framework.
\begin{lemma}\cite[Lemma 2.4]{Colorado2019}\label{lem:traceineq}
For all $\varphi \in \mathcal{X}_{\Sigma_{\mathcal{D}}}^s(\mathscr{C}_{\Omega})$, we have 
\begin{equation*}
\widetilde{S}(\Sigma_{\mathcal{D}})\left(\int_\Omega|\varphi(x,0)|^{2^*_s}  )dx\right)^{\frac{2}{2^*_s}}\leq\kappa_s\int_{\mathscr{C}_{\Omega}} y^{1-2s} |\nabla \varphi|^2 dxdy.
\end{equation*}
\end{lemma}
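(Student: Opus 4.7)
The trace inequality stated in Lemma \ref{lem:traceineq} reduces, by unwinding definitions, to combining the variational characterization of $\widetilde{S}(\Sigma_{\mathcal{D}})$ on $H_{\Sigma_{\mathcal{D}}}^s(\Omega)$ with two structural facts about the extension operator $E_s$: the isometry property \eqref{norma2} and the fact that $E_s[u]$ is the minimizer of the weighted Dirichlet energy $\|\cdot\|_{\mathcal{X}_{\Sigma_{\mathcal{D}}}^s(\mathscr{C}_{\Omega})}$ among all functions in $\mathcal{X}_{\Sigma_{\mathcal{D}}}^s(\mathscr{C}_{\Omega})$ that have trace $u$ on $\Omega\times\{y=0\}$. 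The plan is therefore to chain these ingredients together; no new estimate is needed.

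First I would take an arbitrary $\varphi\in\mathcal{X}_{\Sigma_{\mathcal{D}}}^s(\mathscr{C}_{\Omega})$, assuming $\varphi(\cdot,0)\not\equiv 0$ (otherwise the inequality is trivial), and set $u(x):=\varphi(x,0)=Tr[\varphi](x)$. Since traces of functions in $\mathcal{X}_{\Sigma_{\mathcal{D}}}^s(\mathscr{C}_{\Omega})$ vanish on $\Sigma_{\mathcal{D}}$, we have $u\in H_{\Sigma_{\mathcal{D}}}^s(\Omega)$. Then by the definition of $\widetilde{S}(\Sigma_{\mathcal{D}})$ as an infimum,
\begin{equation*}
\widetilde{S}(\Sigma_{\mathcal{D}})\left(\int_{\Omega}|u(x)|^{2_s^*}dx\right)^{\frac{2}{2_s^*}}\leq \|u\|_{H_{\Sigma_{\mathcal{D}}}^s(\Omega)}^2.
\end{equation*}

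Next I would invoke the isometry \eqref{norma2} to rewrite the right-hand side as $\|E_s[u]\|_{\mathcal{X}_{\Sigma_{\mathcal{D}}}^s(\mathscr{C}_{\Omega})}^2$. The final step is the minimization property of the $s$-harmonic extension (cf.\ \cite[Lemma 2.4]{Colorado2019}): among all $\psi\in\mathcal{X}_{\Sigma_{\mathcal{D}}}^s(\mathscr{C}_{\Omega})$ with $Tr[\psi]=u$, the extension $E_s[u]$ minimizes the weighted Dirichlet energy, hence
\begin{equation*}
\|E_s[u]\|_{\mathcal{X}_{\Sigma_{\mathcal{D}}}^s(\mathscr{C}_{\Omega})}^2\leq \|\varphi\|_{\mathcal{X}_{\Sigma_{\mathcal{D}}}^s(\mathscr{C}_{\Omega})}^2=\kappa_s\int_{\mathscr{C}_{\Omega}}y^{1-2s}|\nabla \varphi|^2dxdy.
\end{equation*}
Concatenating the three inequalities above yields the claim.

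There is no genuine obstacle here; the content of the lemma is essentially a translation, via the Caffarelli--Silvestre extension, of the Sobolev inequality on $H_{\Sigma_{\mathcal{D}}}^s(\Omega)$ encoded in the definition of $\widetilde{S}(\Sigma_{\mathcal{D}})$. The only nontrivial ingredient is the minimization property of $E_s$ in the mixed boundary setting, which is a standard Dirichlet principle for the degenerate elliptic operator $-\operatorname{div}(y^{1-2s}\nabla\cdot)$ and is proved in the cited reference; its role is to ensure that replacing $\varphi$ by its trace's extension can only decrease the weighted Dirichlet energy, so that the infimum defining $\widetilde{S}(\Sigma_{\mathcal{D}})$ may be taken equivalently over $\mathcal{X}_{\Sigma_{\mathcal{D}}}^s(\mathscr{C}_{\Omega})$.
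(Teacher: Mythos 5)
Your proposal is correct and follows essentially the same route the paper indicates: combine the variational definition of $\widetilde{S}(\Sigma_{\mathcal{D}})$ on $H_{\Sigma_{\mathcal{D}}}^s(\Omega)$ with the isometry \eqref{norma2} and the fact that $E_s[u]$ minimizes the weighted Dirichlet energy among functions in $\mathcal{X}_{\Sigma_{\mathcal{D}}}^s(\mathscr{C}_{\Omega})$ with trace $u$. This is exactly the chain the paper sketches in the display preceding the lemma and in the remark following Definition \ref{defi_sob_const}, so nothing further is needed.
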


Let us collect some results for $\widetilde{S}(\Sigma_{\mathcal{D}})$ proven in \cite{Colorado2019} useful in the sequel.

\begin{proposition}\cite[Proposition 3.6]{Colorado2019}\label{prop_cota}
If $\Omega$ is a smooth bounded domain, then $$\widetilde{S}(\Sigma_{\mathcal{D}})\leq 2^{\frac{-2s}{N}}S(N,s).$$
\end{proposition}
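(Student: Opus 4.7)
The plan is to test the Rayleigh quotient defining $\widetilde{S}(\Sigma_{\mathcal{D}})$ with a well-chosen family of extension-type extremals localized around a point on the Neumann part of the boundary, and to show that the value $2^{-2s/N}S(N,s)$ is captured in the limit. The factor $2^{-2s/N}$ strongly suggests a halving/reflection: if $u_\varepsilon$ denotes the Aubin--Talenti-type extremal \eqref{eq:sob_extremal} on $\mathbb{R}^N$ achieving $S(N,s)$ with numerator $A_\varepsilon$ and denominator $B_\varepsilon$, then restricting to a half-space multiplies both by $\tfrac12$, and the scaling exponent gives
\begin{equation*}
\frac{A_\varepsilon/2}{(B_\varepsilon/2)^{2/2_s^*}}=2^{\frac{2}{2_s^*}-1}\,\frac{A_\varepsilon}{B_\varepsilon^{2/2_s^*}}=2^{-\frac{2s}{N}}S(N,s),
\end{equation*}
since $\frac{2}{2_s^*}-1=-\frac{2s}{N}$. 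This is the natural target ratio.

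First I would pick a point $x_0$ in the interior of $\Sigma_{\mathcal{N}}$, at positive distance from $\Gamma=\Sigma_{\mathcal{D}}\cap\overline{\Sigma_{\mathcal{N}}}$, and (after a smooth change of coordinates, using smoothness of $\partial\Omega$) straighten $\partial\Omega$ in a neighbourhood of $x_0$, so that locally $\Omega$ looks like a half-ball sitting on the hyperplane $\{x_N=0\}$ and a neighbourhood of $x_0$ on $\partial\Omega$ lies in $\Sigma_{\mathcal{N}}$. Then I would take the $s$-harmonic extensions $w_\varepsilon=E_s[u_\varepsilon(\cdot-x_0)]$ on the half-space $\mathbb{R}^{N+1}_+$. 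These $w_\varepsilon$ achieve the sharp constant $S(N,s)$ on the full trace space, and by even-reflection in the tangential variable they realize the half-space ratio $2^{-2s/N}S(N,s)$ when a Neumann condition is placed on the flat part of the lateral boundary.

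Next I would localize using a smooth cutoff $\eta(x,y)$ which equals $1$ in a small neighbourhood of $(x_0,0)$ and vanishes outside a slightly larger neighbourhood, so that in particular $\eta w_\varepsilon$ is supported away from $\Sigma_{\mathcal{D}}^\ast$ and therefore belongs to $\mathcal{X}_{\Sigma_{\mathcal{D}}}^s(\mathscr{C}_\Omega)$ as required. The standard concentration computations then give, as $\varepsilon\to 0^+$,
\begin{equation*}
\kappa_s\int_{\mathscr{C}_\Omega}y^{1-2s}|\nabla(\eta w_\varepsilon)|^2\,dxdy=\tfrac12\,\|w_\varepsilon\|^2_{\mathcal{X}^s(\mathbb{R}^{N+1}_+)}+o(1)\cdot\|w_\varepsilon\|^2,
\end{equation*}
\begin{equation*}
\int_\Omega|\eta(x,0)u_\varepsilon(x-x_0)|^{2_s^*}dx=\tfrac12\int_{\mathbb{R}^N}|u_\varepsilon|^{2_s^*}dx+o(1)\cdot\|u_\varepsilon\|^{2_s^*}_{L^{2_s^*}},
\end{equation*}
the factor $\tfrac12$ coming from the fact that the mass concentrates at a boundary point and only half of the bump lies in $\Omega$ (after straightening). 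The gradient-cutoff cross terms are lower order by the standard Br\'ezis--Nirenberg estimates adapted to the weight $y^{1-2s}$.

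Plugging these into the Rayleigh quotient defining $\widetilde{S}(\Sigma_{\mathcal{D}})$ yields
\begin{equation*}
\widetilde{S}(\Sigma_{\mathcal{D}})\le \limsup_{\varepsilon\to 0^+}\frac{\kappa_s\int_{\mathscr{C}_\Omega}y^{1-2s}|\nabla(\eta w_\varepsilon)|^2}{\bigl(\int_\Omega|\eta(x,0)u_\varepsilon|^{2_s^*}\bigr)^{2/2_s^*}}=\frac{\tfrac12 S(N,s)\|u_\varepsilon\|^2_{L^{2_s^*}(\mathbb{R}^N)}}{\bigl(\tfrac12\|u_\varepsilon\|^{2_s^*}_{L^{2_s^*}(\mathbb{R}^N)}\bigr)^{2/2_s^*}}=2^{-\frac{2s}{N}}S(N,s),
\end{equation*}
which is the claimed bound. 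The main obstacle I expect is the careful control of the cross terms arising from the cutoff in the weighted gradient integral and the justification that flattening $\partial\Omega$ near $x_0$ produces only lower-order errors in the $y^{1-2s}$-weighted energy; these are handled as in \cite{Braendle2013,Colorado2019} using the explicit decay of $w_\varepsilon$ away from the concentration point.
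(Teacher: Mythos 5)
Your proposal is correct and follows essentially the same route as the cited proof in \cite{Colorado2019} (and as the computations reproduced in this paper in Lemma~\ref{lemma_est_boundary} and \eqref{eq:trunc_sobolev}): concentrate the Sobolev extremals at a Neumann boundary point, cut off, observe that only half the bump lies in $\Omega$, and let the scaling exponent $\frac{2}{2_s^*}-1=-\frac{2s}{N}$ convert the two factors of $\tfrac12$ into $2^{-2s/N}$. The only cosmetic difference is your detour through boundary straightening and even reflection; the paper gets the same half-ball/half-mass estimates directly from smoothness of $\partial\Omega$, but the underlying idea is identical.
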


\begin{theorem}\cite[Theorem 2.9]{Colorado2019}\label{th_att}
If $\widetilde{S}(\Sigma_{\mathcal{D}})<2^{\frac{-2s}{N}}S(N,s)$ then $\widetilde{S}(\Sigma_{\mathcal{D}})$ is attained.
\end{theorem}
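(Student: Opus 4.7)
The plan is a concentration--compactness argument adapted to the weighted cylinder $\mathscr{C}_{\Omega}$ and the mixed boundary. First I would pick a minimizing sequence $\{w_n\}\subset \mathcal{X}_{\Sigma_{\mathcal{D}}}^s(\mathscr{C}_{\Omega})$ for $\widetilde{S}(\Sigma_{\mathcal{D}})$, normalized by $\|w_n(\cdot,0)\|_{L^{2_s^*}(\Omega)}=1$, so that $\|w_n\|_{\mathcal{X}_{\Sigma_{\mathcal{D}}}^s(\mathscr{C}_{\Omega})}^2\to \widetilde{S}(\Sigma_{\mathcal{D}})$. Boundedness of the sequence produces a weak limit $w_n\rightharpoonup w$, and the task reduces to excluding loss of compactness in the trace on $\{y=0\}$, so that $\|w(\cdot,0)\|_{L^{2_s^*}(\Omega)}=1$ and hence $w$ attains the infimum by weak lower semicontinuity.

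The core of the argument is a P.-L. Lions--type concentration--compactness lemma applied to the bounded measures $\mu_n=\kappa_s y^{1-2s}|\nabla w_n|^2$ on $\overline{\mathscr{C}_\Omega}$ and $\nu_n=|w_n(\cdot,0)|^{2_s^*}$ on $\overline{\Omega}$. Up to a subsequence these converge weakly-$\ast$ to measures $\mu$ and $\nu$ admitting the atomic decomposition
\[
\nu = |w(\cdot,0)|^{2_s^*}+\sum_{j\in J}\nu_j\,\delta_{x_j},\qquad \mu\geq \kappa_s y^{1-2s}|\nabla w|^2+\sum_{j\in J}\mu_j\,\delta_{x_j},
\]
with the local Sobolev link $S^{\mathrm{loc}}_{x_j}\,\nu_j^{2/2_s^*}\leq \mu_j$ at each atom $x_j\in \overline\Omega$. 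The next step is to identify $S^{\mathrm{loc}}_{x_j}$ according to the location: atoms on $\Sigma_{\mathcal{D}}$ can be ruled out by a cut-off argument combined with Lemma~\ref{lem:traceineq} on small half-balls, since the Dirichlet trace prevents mass accumulation; atoms at interior points of $\Omega$ satisfy $S^{\mathrm{loc}}_{x_j}=S(N,s)$ by a blow-up/rescaling argument using the extremals \eqref{eq:sob_extremal}; and atoms at interior points of $\Sigma_{\mathcal{N}}$ satisfy $S^{\mathrm{loc}}_{x_j}=2^{-2s/N}S(N,s)$. The halving arises from an even reflection of the $\mathbb{R}^{N+1}_+$-extremal across the Neumann portion of $\partial_L\mathscr{C}_{\Omega}$, which doubles both the weighted Dirichlet energy and the trace $L^{2_s^*}$ mass, producing the scaling factor $2^{2/2_s^*-1}=2^{-2s/N}$.

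To conclude, suppose by contradiction that some atom $x_{j_0}$ persists. Since $\sum_j \nu_j \leq 1$, the worst case occurs for $\nu_{j_0}=1$ with $x_{j_0}\in\Sigma_{\mathcal{N}}$, yielding
\[
\widetilde{S}(\Sigma_{\mathcal{D}})=\lim_n\|w_n\|_{\mathcal{X}_{\Sigma_{\mathcal{D}}}^s(\mathscr{C}_{\Omega})}^2 \geq \mu_{j_0} \geq 2^{-2s/N}S(N,s),
\]
contradicting the hypothesis. Hence no atoms arise, $w_n(\cdot,0)\to w(\cdot,0)$ strongly in $L^{2_s^*}(\Omega)$ (Brezis--Lieb), and weak lower semicontinuity of the $\mathcal{X}_{\Sigma_{\mathcal{D}}}^s$-norm forces $w$ to be a minimizer. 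The main obstacle I anticipate is the Neumann boundary analysis: rigorously setting up the blow-up and reflection to justify the constant $2^{-2s/N}S(N,s)$ requires localization with cut-offs compatible with the weighted measure $y^{1-2s}\,dxdy$ and respectful of the non-smooth interface $\Gamma$ where Dirichlet and Neumann portions meet.
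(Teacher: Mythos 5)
Your proposal follows the standard concentration--compactness strategy that underlies the result in \cite{Colorado2019} (which states this theorem and proves it through the companion results reproduced here as Theorems~\ref{concompact} and~\ref{CCA}), and the reflection computation giving the local constant $2^{-2s/N}S(N,s)$ at Neumann boundary points is exactly the right mechanism and is carried out correctly: $2^{2/2_s^*-1}=2^{-2s/N}$. However, the final contradiction as you wrote it is not rigorous: \emph{``the worst case occurs for $\nu_{j_0}=1$''} assumes both that the weak limit vanishes and that there is a single atom carrying full mass, but a priori one might have $w\not\equiv 0$ together with several atoms of partial mass, for which $\mu_{j_0}\geq 2^{-2s/N}S(N,s)\,\nu_{j_0}^{2/2_s^*}$ alone does not contradict anything. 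The correct step is the usual subadditivity argument: writing $a=\int_{\Omega}|w(\cdot,0)|^{2_s^*}dx$ so that $a+\sum_j\nu_j=1$, weak lower semicontinuity plus Theorem~\ref{concompact} give
\begin{equation*}
\widetilde{S}(\Sigma_{\mathcal{D}})\;\geq\;\widetilde{S}(\Sigma_{\mathcal{D}})\,a^{2/2_s^*}+\sum_{j}S^{\mathrm{loc}}_{x_j}\,\nu_j^{2/2_s^*}\;\geq\;\widetilde{S}(\Sigma_{\mathcal{D}})\,a+2^{-2s/N}S(N,s)\sum_j\nu_j,
\end{equation*}
using $t^{2/2_s^*}\geq t$ for $t\in[0,1]$; since the hypothesis gives $2^{-2s/N}S(N,s)>\widetilde{S}(\Sigma_{\mathcal{D}})$, this forces $\sum_j\nu_j=0$, hence $a=1$ and $w$ is a minimizer. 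Two smaller imprecisions: atoms on $\Sigma_{\mathcal{D}}$ are not ``ruled out by the Dirichlet trace'' in the sense of not being possible in the decomposition; rather, their local concentration constant is $\geq S(N,s)$, which is even more costly and so also leads to a contradiction through the same estimate. Similarly, concentration on the interface $\Gamma$ (which Theorem~\ref{CCA} explicitly allows, since $x_0\in\overline{\Sigma}_{\mathcal{N}}$) must be included in the case analysis — the local constant there is still $\geq 2^{-2s/N}S(N,s)$ because adding a Dirichlet constraint on part of the lateral boundary can only increase the Sobolev quotient — so the argument goes through, but this case should be stated rather than deferred to a closing caveat.
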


This result makes the difference between Dirichlet problems and mixed Dirichlet-Neumann problems. By taking $|\Sigma_{\mathcal{N}}|=0$ (Dirichlet case) and $\lambda=0$ in \eqref{p_lambda}, we have
the pure critical power problem which has no
positive solution, i.e., the Sobolev constant $S(N,s)$ is not attained, under some geometrical assumptions on $\Omega$,
for instance, under star-shapeness assumptions (cf. \cite{Braendle2013, Pohozaev1965}). Analogous non-existence results based on a Pohozaev--type identity and star-shapeness like assumptions holds for mixed problems (cf. \cite{Lions1988,Colorado2019}). Nevertheless, in the mixed case, the corresponding Sobolev constant
$\widetilde{S}(\Sigma_{\mathcal{D}})$ can be achieved thanks to
Theorem \ref{th_att}. Moreover, taking in mind Remark \ref{remark_att} the hypothesis of Theorem
\ref{th_att} can be fulfilled by making the size of the Dirichlet boundary part small enough (cf. \cite[Theorem 1.1-(3)]{Colorado2019}).\newline

At last, we enunciate a concentration-compactness result adapted to our
fractional setting with mixed boundary conditions useful in the sequel. First,
we recall the concept of a tight sequence.
\begin{definition}
We say that a sequence $\{y^{1-2s}|\nabla
w_n|^2\}_{n\in\mathbb{N}}\subset L^1(\mathscr{C}_{\Omega})$ is tight
if for any $\eta>0$ there exists $\rho>0$ such that
\begin{equation*}
\int_{\{y>\rho\}}\int_{\Omega}y^{1-2s}|\nabla w_n|^2dxdy\leq\eta,\ \forall n\in\mathbb{N}.
\end{equation*}
\end{definition}

\begin{theorem}\cite[Theorem 4.4]{Colorado2019}\label{concompact}
Let $\{w_n\}\subset\mathcal{X}_{\Sigma_{\mathcal{D}}}^s(\mathscr{C}_{\Omega})$ be a
weakly convergent sequence to $w$ in $\mathcal{X}_{\Sigma_{\mathcal{D}}}^s(\mathscr{C}_{\Omega})$
such that $\{y^{1-2s}|\nabla w_n|^2\}_{n\in\mathbb{N}}$ is tight. Let us denote $u_n=Tr[w_n]$, $u=Tr[w]$ and let $\mu,\nu$ be two
nonnegative measures such that, in the sense of measures,

\begin{equation}\label{limitcompactness}
\kappa_sy^{1-2s}|\nabla w_n|^2\to\mu\quad \text{and}\quad |u_n|^{2_s^*}\to\nu,
\end{equation}
Then there exist an at most countable set $\mathfrak{I}$ and points $\{x_i\}_{i\in \mathfrak{I}}\subset\overline{\Omega}$ such that
\begin{enumerate}
\item $\nu=|u|^{2_s^*}+\sum\limits_{i\in \mathfrak{I}}\nu_{i}\delta_{x_{i}},\ \nu_{i}>0,$
\item $\mu\geq\kappa_sy^{1-2s}|\nabla w|^2+\sum\limits_{i\in \mathfrak{I}}\mu_{i}\delta_{x_{i}},\ \mu_{i}>0,$
\item $\mu_i\geq \widetilde{S}(\Sigma_{\mathcal{D}})\nu_i^{\frac{2}{2_s^*}}$.
\end{enumerate}
\end{theorem}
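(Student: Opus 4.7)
The plan is to follow the classical Lions concentration-compactness argument, adapted to the extension formulation with weight $y^{1-2s}$ and to the mixed boundary framework. First I would reduce to the case of zero weak limit by a Brezis--Lieb device: writing $v_n=w_n-w$, the tightness of $\{y^{1-2s}|\nabla w_n|^2\}$ together with the compactness of the local trace embedding imply
\[ \kappa_s y^{1-2s}|\nabla w_n|^2 = \kappa_s y^{1-2s}|\nabla w|^2 + \kappa_s y^{1-2s}|\nabla v_n|^2 + o(1), \]
\[ |u_n|^{2_s^*} = |u|^{2_s^*} + |v_n(\cdot,0)|^{2_s^*} + o(1), \]
both in the weak-$*$ sense of measures. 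Denoting by $\tilde\mu,\tilde\nu$ the weak-$*$ limits of the sequences associated to $v_n$, it then suffices to produce the atomic decomposition for the pair $(\tilde\mu,\tilde\nu)$; statements (1) and (2) follow at once.

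The core of the argument is a reverse H\"older inequality on the limit measures. For any $\varphi\in C_c^\infty(\mathbb{R}^{N+1})$ whose restriction to $\Sigma_{\mathcal{D}}^*$ vanishes, the product $\varphi v_n$ lies in $\mathcal{X}_{\Sigma_{\mathcal{D}}}^s(\mathscr{C}_{\Omega})$, so Lemma \ref{lem:traceineq} yields
\[ \widetilde{S}(\Sigma_{\mathcal{D}})\left(\int_\Omega |\varphi(x,0)|^{2_s^*}|v_n(x,0)|^{2_s^*}\,dx\right)^{2/2_s^*} \leq \kappa_s\int_{\mathscr{C}_{\Omega}} y^{1-2s}|\nabla(\varphi v_n)|^2\,dxdy. \]
Expanding the gradient and using $v_n\to 0$ strongly in $L^2_{\mathrm{loc}}(\mathscr{C}_{\Omega},y^{1-2s}dxdy)$ (a weighted Rellich theorem) to kill the cross and lower-order terms, passage to the limit $n\to\infty$ produces
\[ \widetilde{S}(\Sigma_{\mathcal{D}})\left(\int_\Omega |\varphi(x,0)|^{2_s^*}\,d\tilde\nu\right)^{2/2_s^*} \leq \int_{\mathscr{C}_{\Omega}} |\varphi|^2\,d\tilde\mu. \]
By a standard density argument this inequality extends to all bounded Borel test functions compatible with the Dirichlet trace.

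From this reverse H\"older relation I would invoke Lions' classical lemma: a pair of nonnegative measures satisfying such an inequality forces $\tilde\nu$ to be purely atomic, $\tilde\nu=\sum_{i\in\mathfrak{I}}\nu_i\delta_{x_i}$ on $\overline{\Omega}$, and at each atom $\tilde\mu$ carries at least the mass $\widetilde{S}(\Sigma_{\mathcal{D}})\nu_i^{2/2_s^*}\delta_{x_i}$, giving (3). The delicate point I anticipate is the interplay between the atoms and the boundary decomposition $\partial\Omega=\Sigma_{\mathcal{D}}\cup\Sigma_{\mathcal{N}}$: for atoms lying on $\Sigma_{\mathcal{D}}$ one must approximate $\delta_{x_i}$ by test functions that vanish on $\Sigma_{\mathcal{D}}^*$, which requires a careful scaling/cut-off argument, while for atoms on $\Sigma_{\mathcal{N}}$ the natural local Sobolev constant is that of a half-space with Neumann trace, smaller than $\widetilde{S}(\Sigma_{\mathcal{D}})$ by the factor $2^{-2s/N}$ (cf. Proposition \ref{prop_cota}); nevertheless, the global bound $\widetilde{S}(\Sigma_{\mathcal{D}})$ in (3) is recovered because the trace inequality employed above is itself the \emph{global} one on $\mathscr{C}_{\Omega}$. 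The tightness hypothesis is what prevents mass from escaping at $y=+\infty$ throughout the localization step, thereby legitimating the Brezis--Lieb decomposition in the first paragraph.
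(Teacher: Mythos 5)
The paper cites this result from \cite{Colorado2019} without reproducing its proof, so there is no in-paper argument to compare against; your proposal follows the standard Lions concentration-compactness scheme transported to the Caffarelli--Silvestre extension setting, which is indeed the expected route and does work. The structure is right: Brezis--Lieb decomposition to reduce to the zero weak limit, a reverse H\"older inequality obtained from Lemma~\ref{lem:traceineq} applied to $\varphi v_n$ after killing lower-order terms via a weighted Rellich compactness, then Lions' measure lemma to extract the atoms and the mass relation~(3).

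One imprecision worth fixing: you restrict the cut-offs to $\varphi\in C_c^\infty(\mathbb{R}^{N+1})$ vanishing on $\Sigma_{\mathcal{D}}^*$, but this constraint is unnecessary and actually creates the difficulty you later worry about. Since $v_n=w_n-w$ already belongs to $\mathcal{X}_{\Sigma_{\mathcal{D}}}^s(\mathscr{C}_\Omega)$ and hence has vanishing trace on $\Sigma_{\mathcal{D}}^*$, the product $\varphi v_n$ inherits this for \emph{any} $\varphi\in C_c^\infty(\overline{\mathbb{R}^{N+1}_+})$, so the trace inequality is available for all such $\varphi$. Once this is noted, the reverse H\"older relation
\[
\widetilde{S}(\Sigma_{\mathcal{D}})\left(\int_\Omega |\varphi(x,0)|^{2_s^*}\,d\tilde\nu\right)^{2/2_s^*} \leq \int_{\mathscr{C}_{\Omega}} |\varphi|^2\,d\tilde\mu
\]
holds for all smooth compactly supported $\varphi$, and approximating $\delta_{x_i}$ at a potential atom $x_i\in\overline{\Omega}$ (whether $x_i\in\Sigma_{\mathcal{D}}$ or $x_i\in\overline{\Sigma}_{\mathcal{N}}$) is then standard by outer regularity of $\tilde\mu,\tilde\nu$. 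Your second concern about the local Neumann half-space constant being $2^{-2s/N}S(N,s)$ is also a red herring for this statement: item~(3) only claims the global constant $\widetilde{S}(\Sigma_{\mathcal{D}})$, which is exactly what Lemma~\ref{lem:traceineq} delivers, and the finer local constant is only needed for the more refined Theorem~\ref{CCA}. With these two points cleared up, the proposal is correct.
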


Using Theorem \ref{concompact} and Brezis-Lieb Lemma (cf. \cite{Brezis1983}) it can be proved the following.
\begin{theorem}\cite[Theorem 4.5]{Colorado2019}\label{CCA}
Let $w_m$ be a minimizing sequence of $\widetilde{S}(\Sigma_{\mathcal{D}})$. Then
either $w_m$ is relatively compact or the weak limit, $w\equiv
0$. Even more, in the latter case there exist a subsequence $w_{m}$
and a point $x_0\in\overline{\Sigma}_{\mathcal{N}}$ such that
\begin{equation}\label{acumulacion}
\kappa_sy^{1-2s}|\nabla w_m|^2\to \widetilde{S}(\Sigma_{\mathcal{D}})\delta_{x_0}\quad \text{and}\quad |u_m|^{2_s^*}\to\delta_{x_0},
\end{equation}
with $u_m=Tr[w_m]$.
\end{theorem}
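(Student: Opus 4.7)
The plan is to apply the concentration-compactness principle of Theorem \ref{concompact} to a minimizing sequence and then exploit the mass normalization together with the strict concavity of $t \mapsto t^{2/2_s^*}$ to force a dichotomy. First, by homogeneity normalize so that $\|u_m\|_{L^{2_s^*}(\Omega)} = 1$ and $\|w_m\|_{\mathcal{X}_{\Sigma_{\mathcal{D}}}^s(\mathscr{C}_\Omega)}^2 \to \widetilde{S}(\Sigma_{\mathcal{D}})$. Since the $s$-harmonic extension minimizes the $\mathcal{X}_{\Sigma_{\mathcal{D}}}^s$-norm among all functions with a fixed trace, one may replace $w_m$ by $E_s[u_m]$; this preserves the traces and the minimizing property while providing both an effective control of the energy and the $y$-decay needed to verify tightness of $\{y^{1-2s}|\nabla w_m|^2\}$. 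Extract a subsequence with $w_m \rightharpoonup w$ weakly in $\mathcal{X}_{\Sigma_{\mathcal{D}}}^s(\mathscr{C}_\Omega)$ and $u_m \to u := w(\cdot,0)$ a.e.\ in $\Omega$.

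Next, apply Theorem \ref{concompact} to obtain nonnegative measures $\mu, \nu$, an at most countable set $\mathfrak{I}$ and points $\{x_i\}_{i\in \mathfrak{I}} \subset \overline{\Omega}$ satisfying the three bullets of the theorem. Taking total masses, combining the lower bound on $\mu$ with the trace-Sobolev inequality from Lemma \ref{lem:traceineq} applied to $w$ itself, yields the chain
\[
\widetilde{S}(\Sigma_{\mathcal{D}}) = \lim \|w_m\|^2_{\mathcal{X}_{\Sigma_{\mathcal{D}}}^s(\mathscr{C}_\Omega)} \geq \|w\|_{\mathcal{X}_{\Sigma_{\mathcal{D}}}^s(\mathscr{C}_\Omega)}^2 + \sum_i \mu_i \geq \widetilde{S}(\Sigma_{\mathcal{D}})\left(\|u\|_{L^{2_s^*}(\Omega)}^2 + \sum_i \nu_i^{2/2_s^*}\right),
\]
together with the mass identity $\|u\|^{2_s^*}_{L^{2_s^*}(\Omega)} + \sum_i \nu_i = 1$. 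Setting $b_0 = \|u\|^{2_s^*}_{L^{2_s^*}(\Omega)}$ and $b_i = \nu_i$, the inequality above reads $\sum_k b_k^{2/2_s^*} \leq 1$, and since $2/2_s^* < 1$ and $0 \leq b_k \leq 1$ one has $b_k^{2/2_s^*} \geq b_k$, so equality must hold throughout, which happens iff exactly one $b_k$ equals $1$ and all the others vanish.

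This produces the alternative claimed in the theorem: either $b_0 = 1$, i.e.\ $u \not\equiv 0$ and no concentration occurs, in which case the inequality chain collapses to equalities, $w$ attains $\widetilde{S}(\Sigma_{\mathcal{D}})$, and a standard Brezis--Lieb argument combined with weak convergence and the matching of norms upgrades $w_m \to w$ to strong convergence; or $u \equiv 0$ and all the mass concentrates at a single point $x_0$, so $\nu = \delta_{x_0}$ and $\mu = \widetilde{S}(\Sigma_{\mathcal{D}})\,\delta_{x_0}$, which is precisely \eqref{acumulacion}. The remaining and most delicate step is to prove $x_0 \in \overline{\Sigma}_{\mathcal{N}}$: if on the contrary $x_0$ lay in the open Dirichlet part $\Sigma_{\mathcal{D}}$, a localization in a small ball around $x_0$ would produce functions whose trace vanishes on $\Sigma_{\mathcal{D}}$, so the relevant local Sobolev constant would be $S(N,s)$; concentration at $x_0$ would then force $\widetilde{S}(\Sigma_{\mathcal{D}}) \geq S(N,s)$, contradicting Proposition \ref{prop_cota}. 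I expect this last localization/best-constant comparison to be the main technical obstacle, since it requires cutting off in a neighborhood that respects both the Dirichlet part of the boundary and the weight $y^{1-2s}$ of the extension.
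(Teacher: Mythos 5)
Your proposal follows essentially the same route as the cited proof: normalize, apply the concentration--compactness Theorem~\ref{concompact} together with the trace inequality of Lemma~\ref{lem:traceineq}, exploit the strict sub-additivity of $t\mapsto t^{2/2_s^*}$ to force the dichotomy, and then locate the concentration point by a local best-constant comparison. The paper itself signals that the proof of \cite[Theorem~4.5]{Colorado2019} is obtained from Theorem~\ref{concompact} together with the Brezis--Lieb lemma, and your chain of inequalities together with the strong-convergence step in the case $b_0=1$ captures exactly that.

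One genuine, though easily repaired, gap: in the last step you argue only that $x_0$ cannot lie on the open Dirichlet part $\Sigma_{\mathcal{D}}$. But a priori the concentration point can sit anywhere in $\overline{\Omega}$, and $\overline{\Omega}\setminus\overline{\Sigma}_{\mathcal{N}}$ also contains all of the interior $\Omega$. You must therefore rule out interior concentration as well. Fortunately the very same localization argument works (and is in fact cleaner at interior points): a cut-off $\phi$ supported in a small semi-ball $B_r^+(x_0,0)$ with $\overline{B_r(x_0)}\subset\Omega$ produces $\phi w_m\in\mathcal{X}_0^s(\mathscr{C}_{\Omega})$, whose trace lies in $H_0^s(\Omega)$; concentration of all the mass at $(x_0,0)$ then forces
\[
S(N,s)\ \le\ \liminf_m\frac{\|\phi w_m\|_{\mathcal{X}^s_{\Sigma_{\mathcal{D}}}(\mathscr{C}_\Omega)}^2}{\|\phi u_m\|_{L^{2_s^*}(\Omega)}^2}=\widetilde{S}(\Sigma_{\mathcal{D}}),
\]
contradicting Proposition~\ref{prop_cota}. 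In carrying out this estimate (for both the interior and the Dirichlet-boundary cases) one has to show that the cross terms arising from $\nabla(\phi w_m)=\phi\nabla w_m+w_m\nabla\phi$ are negligible; this uses $w_m\rightharpoonup0$ and the compact embedding into weighted $L^2$ on the support of $\nabla\phi$, and is precisely the ``main technical obstacle'' you flag. With that caveat the argument is sound.
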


\section{Concave case: $0<q<1$}\label{Section_concave}

In this section we prove Theorem \ref{sublinear}. The existence of a positive minimal solution and related results follow from nowadays well known arguments so we will be brief in details. The existence of a second positive solution follows from the following scheme: first we prove a separation result deduced from a recent work of the author (cf. \cite{Ortega2021}) which implies that the minimal solution is a minimum of the energy functional $I_\lambda$. Next, due to the lack of compactness of the Sobolev embedding at the critical exponent $2_s^*$, we prove that a local PS condition holds below the critical level $c_{\mathcal{D}-\mathcal{N}}^*=\frac{s}{N}[\widetilde{S}(\Sigma_{\mathcal{D}})]^{\frac{N}{2s}}$. Finally, we construct paths below the critical level $c_{\mathcal{D}-\mathcal{N}}^*$. This is done either using the extremal functions of the constant $\widetilde{S}(\Sigma_{\mathcal{D}})$ (in case this constant is attained) or, taking in mind Theorems \ref{limitcompactness} and \ref{CCA}, by concentrating the extremal functions of the Sobolev inequality at points on the Neumann boundary $\Sigma_{\mathcal{N}}$. 

Let us consider the following fractional mixed problem with a general nonlinearity,
\begin{equation}\label{prob:general_f}
        \left\{
        \begin{tabular}{lcl}
        $(-\Delta)^su=f(x,u),$ & $u>0$ &in $\Omega$, \\
        $\mkern+22mu B(u)=0$& &on $\partial\Omega$.
        \end{tabular}
        \right.
        \tag{$P_{f}$}
\end{equation}
Note that \eqref{p_lambda} corresponds to \eqref{prob:general_f} for $f=f_\lambda(t)=\lambda|t|^{q-1}t+|t|^{2_s^*-1}t$. The associated energy functional, $I_f: H_{\Sigma_{\mathcal{D}}}^s(\Omega)\mapsto\mathbb{R}$, is given by 
\begin{equation*}
I_f(u)=\frac{1}{2}\int_{\Omega}|(-\Delta)^{s/2}u|^2dx-\int_{\Omega}F(w(x,0))dx,
\end{equation*}
where $F(t)=\int_0^\tau f(\tau)d\tau$ (resp. $F_\lambda(t)=\int_0^\tau f_\lambda(\tau)d\tau=\frac{1}{q+1}|t|^{q+1}+\frac{1}{2_s^*}|t|^{2_s^*}$). The corresponding extension problem reads
\begin{equation}\label{prob:general_f_extension}
        \left\{
        \begin{array}{rlcl}
        \displaystyle -\text{div}(y^{1-2s}\nabla w)&\!\!\!\!=0  & & \mbox{ in } \mathcal{C}_{\Omega} , \\
        \displaystyle B(w)&\!\!\!\!=0   & & \mbox{ on } \partial_L\mathcal{C}_{\Omega} , \\
        w&\!\!\!\!>0   & &\mbox{ on } \Omega\times\{y=0\},
        \\
         \displaystyle \frac{\partial w}{\partial \nu^s}&\!\!\!\!= f(w) & &  \mbox{ on } \Omega\times\{y=0\}.
         \end{array}
         \right.
         \tag{$P_{f}^*$}
\end{equation}
The associated energy functional, $J_f: \mathcal{X}_{\Sigma_{\mathcal{D}}}^s(\mathscr{C}_{\Omega})\mapsto\mathbb{R}$, is
\begin{equation*}
J_f(w)=\frac{\kappa_s}{2}\int_{\mathscr{C}_{\Omega}}y^{1-2s}|\nabla w|^2dxdy-\int_{\Omega}F(w(x,0))dx.
\end{equation*}

\begin{lemma}
A function $u_0\in H_{\Sigma_{\mathcal{D}}}^s(\Omega)$ is a local minimum of the energy functional $I_\lambda$ if and only if $w_0=E_s[u_0]\in \mathcal{X}_{\Sigma_{\mathcal{D}}}^s(\mathscr{C}_{\Omega})$ is a local minimum of the energy functional $J_{\lambda}$.
\end{lemma}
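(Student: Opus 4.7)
The key observation underlying the proof is the identity
\begin{equation*}
J_\lambda(E_s[u])=I_\lambda(u)\qquad\text{for all }u\in H_{\Sigma_{\mathcal{D}}}^s(\Omega),
\end{equation*}
which I would establish first. Indeed, by the isometry \eqref{norma2} the quadratic term $\frac{\kappa_s}{2}\int_{\mathscr{C}_{\Omega}}y^{1-2s}|\nabla E_s[u]|^2\,dxdy$ equals $\frac{1}{2}\int_{\Omega}|(-\Delta)^{s/2}u|^2\,dx$, while the trace condition $E_s[u](x,0)=u(x)$ makes the lower-order terms match verbatim. This identity alone already yields the implication that $w_0=E_s[u_0]$ being a local minimum of $J_\lambda$ forces $u_0$ to be a local minimum of $I_\lambda$: if $u$ is $H_{\Sigma_{\mathcal{D}}}^s$-close to $u_0$, then by the isometry $E_s[u]$ is $\mathcal{X}_{\Sigma_{\mathcal{D}}}^s$-close to $w_0$, and so
\begin{equation*}
I_\lambda(u)=J_\lambda(E_s[u])\ge J_\lambda(w_0)=I_\lambda(u_0).
\end{equation*}

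For the converse implication, assume $u_0$ is a local minimum of $I_\lambda$, and let $w$ be close to $w_0=E_s[u_0]$ in $\mathcal{X}_{\Sigma_{\mathcal{D}}}^s(\mathscr{C}_{\Omega})$. Set $u=Tr[w]\in H_{\Sigma_{\mathcal{D}}}^s(\Omega)$. The crucial ingredient is the fact (recalled in Definition \ref{defi_sob_const}, cf. \cite[Lemma 2.4]{Colorado2019}) that the extension function minimizes the $\mathcal{X}_{\Sigma_{\mathcal{D}}}^s$-norm among all functions sharing the same trace on $\{y=0\}$. Applying this to $w-w_0$, which has trace $u-u_0$, gives
\begin{equation*}
\|u-u_0\|_{H_{\Sigma_{\mathcal{D}}}^s(\Omega)}=\|E_s[u-u_0]\|_{\mathcal{X}_{\Sigma_{\mathcal{D}}}^s(\mathscr{C}_{\Omega})}\le\|w-w_0\|_{\mathcal{X}_{\Sigma_{\mathcal{D}}}^s(\mathscr{C}_{\Omega})},
\end{equation*}
so $u$ falls inside the neighborhood of $u_0$ where $I_\lambda(u)\ge I_\lambda(u_0)$ provided $w$ is chosen sufficiently close to $w_0$.

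The proof is then closed by the chain of inequalities
\begin{equation*}
J_\lambda(w)=\frac{\kappa_s}{2}\int_{\mathscr{C}_{\Omega}}y^{1-2s}|\nabla w|^2\,dxdy-\int_{\Omega}F_\lambda(u)\,dx\ge\frac{\kappa_s}{2}\int_{\mathscr{C}_{\Omega}}y^{1-2s}|\nabla E_s[u]|^2\,dxdy-\int_{\Omega}F_\lambda(u)\,dx=I_\lambda(u)\ge I_\lambda(u_0)=J_\lambda(w_0),
\end{equation*}
where the first inequality again uses the norm-minimizing property of $E_s[u]$ against the competitor $w$ (which shares the same trace $u$), and the final equality is the identity established at the outset. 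The only mildly delicate point, rather than a real obstacle, is this transfer of neighborhoods between the two Hilbert spaces, which is handled cleanly once one observes that the extension and the trace are mutually inverse continuous maps whose norm comparison is controlled precisely by the minimization property of $E_s$.
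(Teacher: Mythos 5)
Your proof is correct. The paper does not spell out an argument but simply defers to \cite[Proposition~3.1]{Barrios2012}, whose proof proceeds along exactly the lines you give: the identity $J_\lambda\circ E_s=I_\lambda$ coming from the isometry \eqref{norma2}, linearity of $E_s$, and the norm-minimizing property of the $s$-harmonic extension among all competitors sharing the same trace on $\{y=0\}$.
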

\begin{proof}
The follows exactly as that of \cite[Proposition 3.1]{Barrios2012}, so we omit the details.
\end{proof}

\begin{proposition}\label{prop:bound}
Let $u\in H_{\Sigma_{\mathcal{D}}}^s(\Omega)$ be a solution to \eqref{prob:general_f} with the nonlinearity $f$ satisfying 
\begin{equation*}
0\leq f(x,t)\leq C(1+|t|^p)\quad\text{for all } (x,t)\in\Omega\times\mathbb{R}\ \text{and } 0< p\leq 2_s^*-1.
\end{equation*}
Then $u\in L^{\infty}(\Omega)$.
\end{proposition}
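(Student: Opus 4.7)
The plan is a Moser-type iteration performed in the extension framework. Let $w=E_s[u]\in\mathcal{X}^s_{\Sigma_{\mathcal{D}}}(\mathscr{C}_\Omega)$, and for each truncation level $T>0$ and exponent $\beta\geq0$ set $w_T=\min(w,T)$. I would take $\varphi_T=w\,w_T^{2\beta}$ as test function in the weak formulation of \eqref{prob:general_f_extension}; this function lies in $\mathcal{X}^s_{\Sigma_{\mathcal{D}}}(\mathscr{C}_\Omega)$ because $w_T$ is bounded, it vanishes on $\Sigma_{\mathcal{D}}^*$, and a direct chain-rule computation gives both $\langle\nabla w,\nabla\varphi_T\rangle\geq w_T^{2\beta}|\nabla w|^2$ and $|\nabla(w\,w_T^\beta)|^2\leq(\beta+1)^2 w_T^{2\beta}|\nabla w|^2$. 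Combining these two pointwise inequalities with the mixed trace inequality of Lemma \ref{lem:traceineq} applied to $w\,w_T^\beta$ yields
\begin{equation*}
\frac{\widetilde{S}(\Sigma_{\mathcal{D}})}{(\beta+1)^2}\left(\int_\Omega (u\,u_T^\beta)^{2_s^*}\,dx\right)^{2/2_s^*}\leq\int_\Omega f(x,u)\,u\,u_T^{2\beta}\,dx,
\end{equation*}
where $u_T=w_T(\cdot,0)=\min(u,T)$.

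Now I would use the growth bound $0\leq f(x,u)\leq C(1+|u|^p)$ with $p\leq 2_s^*-1$ to control the right-hand side by $C\int(u\,u_T^\beta)^{2}(1+u^{p-1})\,dx$ plus a lower-order linear term. Splitting the integration domain as $\{u\leq M\}\cup\{u>M\}$ and applying H\"older's inequality with the dual exponents $N/(2s)$ and $2_s^*/2$, the contribution on $\{u>M\}$ is bounded by
\begin{equation*}
C\bigl\|u\,\rchi_{\{u>M\}}\bigr\|_{L^{(p-1)N/(2s)}(\Omega)}^{p-1}\,\|u\,u_T^\beta\|_{L^{2_s^*}(\Omega)}^{2}.
\end{equation*}
Since $(p-1)N/(2s)\leq 2_s^*$ and $u\in L^{2_s^*}(\Omega)$ by the embedding, absolute continuity of the integral lets me choose $M=M(\beta)$ large enough that this coefficient is at most $\widetilde{S}(\Sigma_{\mathcal{D}})/(2(\beta+1)^2)$, so the corresponding term can be absorbed into the left-hand side. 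The remainder is bounded by a constant depending on $M$ times $\|u\|_{L^{2(\beta+1)}}^{2(\beta+1)}$ (and lower-order norms).

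Passing $T\to\infty$ via monotone convergence, the absorbed inequality reads: if $u\in L^{2(\beta+1)}(\Omega)$, then $u\in L^{(\beta+1)2_s^*}(\Omega)$, for every $\beta\geq 0$. Iterating this implication from the initial datum $u\in L^{2_s^*}(\Omega)$ with $\beta_{n}+1=(2_s^*/2)^{n}$, the integrability exponent is multiplied by $2_s^*/2=N/(N-2s)>1$ at each step, so $u\in L^r(\Omega)$ for every $r<\infty$. A final step — either a Moser iteration with $\beta\to\infty$ and careful tracking of constants, or the standard regularity for $(-\Delta)^s v=g$ with $g\in L^q(\Omega)$, $q>N/(2s)$, which is now satisfied by $f(x,u)$ in view of the growth bound — promotes this to $u\in L^\infty(\Omega)$.

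The principal difficulty is the criticality of the exponent $p=2_s^*-1$: the natural H\"older estimate produces, on the right-hand side, a term of exactly the same order as the Sobolev term coming from Lemma \ref{lem:traceineq}, so a naive bootstrap would stall. The absolute-continuity/small-measure splitting against $\{u>M\}$ is precisely what unblocks the iteration and is the one non-routine ingredient in the argument; once it is in place, the geometric growth of the exponents follows routinely from $2_s^*/2>1$.
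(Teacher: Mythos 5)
Your argument is correct and is essentially the same as the paper's, which simply cites \cite{Barrios2012} (Proposition 5.1) for the truncation/absorption bootstrap you describe (test with $w\,w_T^{2\beta}$, apply the mixed trace inequality of Lemma \ref{lem:traceineq}, split against $\{u>M\}$ and absorb the critical term via absolute continuity) and \cite{Carmona2020} (Theorem 3.7) for the final $L^r\to L^\infty$ step once $f(\cdot,u)\in L^r$ with $r>N/(2s)$. The only difference is that the paper outsources the iteration to those references rather than spelling it out.
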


\begin{proof}
Following verbatim the proof of \cite[Proposition 5.1]{Barrios2012}, by a truncation argument we deduce $u^{\beta+1}\in L^{2_s^*}(\Omega)$ for $\beta\geq0$. Then, by an iteration argument, we get $f(\cdot,u)\in L^r(\Omega)$, $r>\frac{N}{2s}$ after a finite number of steps. Because of \cite[Theorem 3.7]{Carmona2020} we conclude $u\in L^{\infty}(\Omega)$.
\end{proof}
The next result deals with the sub and supersolutions method.
\begin{lemma}\cite[Lemma 5.2]{Carmona2020a}\label{existencia}
Suppose that there exist a subsolution $w_1$ and a supersolution $w_2$ to \eqref{prob:general_f_extension}, i.e., $w_1,w_2 \in \mathcal{X}_{\Sigma_{\mathcal{D}}}^s(\mathscr{C}_{\Omega})$ such that, for
any nonnegative $\phi \in \mathcal{X}_{\Sigma_{\mathcal{D}}}^s(\mathscr{C}_{\Omega})$,
\begin{equation*}
\begin{split}
\kappa_s\int_{\mathscr{C}_{\Omega}}y^{1-2s}\nabla w_1\nabla \phi dxdy&\leq\int_{\Omega}f(w_1(x,0))\phi (x,0)dx\,, \\
\kappa_s\int_{\mathscr{C}_{\Omega}}y^{1-2s}\nabla w_2\nabla \phi dxdy&\geq\int_{\Omega}f(w_2(x,0))\phi (x,0) dx\,.
\end{split}
\end{equation*}
\vskip -2pt \noindent
Assume moreover that $w_1\leq w_2$ in $\mathscr{C}_{\Omega}$. Then, there  exists a solution $w$ to problem \eqref{prob:general_f_extension} verifying $w_1\leq w \leq w_2$ in $\mathscr{C}_{\Omega}$.
\end{lemma}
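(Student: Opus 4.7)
The plan is to prove this by the truncation-and-minimization variant of the sub/supersolution method, which fits naturally with the variational structure already set up for \eqref{prob:general_f_extension}. First I would introduce a truncated nonlinearity
\begin{equation*}
\widetilde{f}(x,t)=\begin{cases} f(x,w_1(x,0)), & t<w_1(x,0),\\ f(x,t), & w_1(x,0)\le t\le w_2(x,0),\\ f(x,w_2(x,0)), & t>w_2(x,0),\end{cases}
\end{equation*}
and let $\widetilde{F}(x,t)=\int_0^t\widetilde{f}(x,\tau)\,d\tau$. Because $w_1,w_2\in \mathcal{X}_{\Sigma_{\mathcal{D}}}^s(\mathscr{C}_{\Omega})$ have traces in $L^{2_s^*}(\Omega)$ and $f$ satisfies the growth bound of Proposition \ref{prop:bound}, $\widetilde{f}(x,t)$ is controlled by an $L^{\frac{2N}{N+2s}}(\Omega)$ function plus a term linear in $t$, so the modified functional
\begin{equation*}
\widetilde{J}(w)=\frac{\kappa_s}{2}\int_{\mathscr{C}_{\Omega}}y^{1-2s}|\nabla w|^2\,dxdy-\int_{\Omega}\widetilde{F}(x,w(x,0))\,dx
\end{equation*}
is well defined, $C^1$, coercive, and weakly lower semicontinuous on $\mathcal{X}_{\Sigma_{\mathcal{D}}}^s(\mathscr{C}_{\Omega})$ (using Lemma \ref{lem:traceineq} and compactness of the trace embedding into $L^r(\Omega)$ for $r<2_s^*$).

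Next I would apply the direct method of the calculus of variations to obtain a global minimizer $w\in\mathcal{X}_{\Sigma_{\mathcal{D}}}^s(\mathscr{C}_{\Omega})$ of $\widetilde{J}$, which is automatically a weak solution of the extension problem associated with $\widetilde{f}$, i.e.
\begin{equation*}
\kappa_s\int_{\mathscr{C}_\Omega}y^{1-2s}\langle\nabla w,\nabla\phi\rangle\,dxdy=\int_\Omega\widetilde{f}(x,w(x,0))\phi(x,0)\,dx\qquad\forall\phi\in\mathcal{X}_{\Sigma_{\mathcal{D}}}^s(\mathscr{C}_{\Omega}).
\end{equation*}
The crucial step is then to show the ordering $w_1\le w\le w_2$ in $\mathscr{C}_\Omega$. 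I would test the supersolution inequality for $w_2$ and the equation for $w$ against the nonnegative admissible test function $\phi=(w-w_2)^+\in\mathcal{X}_{\Sigma_{\mathcal{D}}}^s(\mathscr{C}_{\Omega})$, subtract the two identities, and use that on the set $\{w(x,0)>w_2(x,0)\}$ we have $\widetilde{f}(x,w(x,0))=f(x,w_2(x,0))$ by construction; this forces
\begin{equation*}
\kappa_s\int_{\mathscr{C}_\Omega}y^{1-2s}|\nabla(w-w_2)^+|^2\,dxdy\le 0,
\end{equation*}
hence $(w-w_2)^+\equiv 0$. The symmetric argument with $\phi=(w_1-w)^+$ gives $w\ge w_1$.

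Once the ordering is established, $\widetilde{f}(x,w(x,0))=f(x,w(x,0))$ a.e., so $w$ solves \eqref{prob:general_f_extension}. The main obstacle is technical rather than conceptual: one must verify that $(w-w_2)^+$ and $(w_1-w)^+$ actually lie in $\mathcal{X}_{\Sigma_{\mathcal{D}}}^s(\mathscr{C}_{\Omega})$, i.e., that the space is closed under taking positive parts and that the trace of these truncations is the corresponding truncation of the traces. This follows from standard chain-rule-type properties for the weighted Sobolev space $H^1(\mathscr{C}_\Omega,y^{1-2s}dxdy)$ and the fact that $w_1,w_2$ themselves vanish on $\Sigma_{\mathcal{D}}^*$, so the truncated differences also do; with this in hand the argument above closes rigorously.
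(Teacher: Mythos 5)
Your proposal is a valid and essentially self-contained proof; note, however, that the paper does not give a proof of this lemma at all, it simply cites \cite[Lemma 5.2]{Carmona2020a}. What you have written is the truncation-and-minimization route, which is one of the two standard ways to prove such a result. The other, and the one suggested by the surrounding text (see the ``monotone iteration'' in the proof of Lemma \ref{lem:minimal}, and the way the minimal solution is produced there), is the monotone iteration scheme: one adds a large linear term $\mu w$ to make the right-hand side monotone, then solves the linearized problems successively starting from $w_1$ and shows the sequence increases to a solution. The iteration method has the advantage of producing the \emph{minimal} solution above $w_1$ directly, which is exploited elsewhere in the paper; your variational method does not, by itself, single out the minimal one, but it is shorter and imposes no Lipschitz or monotonicity structure on $f$ beyond the growth bound.

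One detail in your argument deserves more care than your parenthetical remark suggests. The truncated nonlinearity is dominated uniformly in $t$ by $g(x)=C\bigl(1+|w_1(x,0)|^{2_s^*-1}+|w_2(x,0)|^{2_s^*-1}\bigr)\in L^{\frac{2N}{N+2s}}(\Omega)$, and the H\"older conjugate of $\frac{2N}{N+2s}$ is exactly $2_s^*$. Thus the estimate $|\widetilde F(x,t)-\widetilde F(x,s)|\le g(x)|t-s|$ together with compactness of the trace into $L^r(\Omega)$ for $r<2_s^*$ does \emph{not} close the weak continuity of $w\mapsto\int_\Omega\widetilde F(x,w(x,0))\,dx$ along weakly convergent sequences, since the relevant H\"older pairing sits at the borderline exponent. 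One must instead observe that $\{g\,|w_n(\cdot,0)|\}$ is uniformly integrable, because $\|g\|_{L^{2N/(N+2s)}(E)}\to0$ as $|E|\to0$ while $\|w_n(\cdot,0)\|_{L^{2_s^*}}$ is uniformly bounded by Lemma \ref{lem:traceineq}, and then invoke Vitali (or Egorov plus absolute continuity of the integral) together with a.e.\ convergence of the traces. With this small fix the weak lower semicontinuity and the $C^1$ character of $\widetilde J$ go through, and the rest of the argument --- the comparison via $\phi=(w-w_2)^+$ and $\phi=(w_1-w)^+$, the fact that these truncations remain in $\mathcal{X}_{\Sigma_{\mathcal{D}}}^s(\mathscr{C}_{\Omega})$ because they vanish on $\Sigma_{\mathcal{D}}^*$, and the identification $\widetilde f=f$ on the ordered solution --- is correct as written.
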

Finally we recall a comparison result.
\begin{lemma}\cite[Lemma 5.3]{Carmona2020a}\label{orden}
Let $w_1,w_2\in \mathcal{X}_{\Sigma_{\mathcal{D}}}^s(\mathscr{C}_{\Omega})$ be respectively a positive subsolution and a positive supersolution to \eqref{prob:general_f_extension} and assume that $f(t)/t$ is decreasing for $t>0$. Then $w_1\leq w_2$ in $\mathscr{C}_{\Omega}$.
\end{lemma}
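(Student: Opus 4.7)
The plan is to run a Picone-type argument in the degenerate-weighted extension space, using the monotonicity of $f(t)/t$ to force a contradiction on the set where $w_1>w_2$. Since $w_1,w_2>0$, I would first regularize by setting $W_i:=w_i+\varepsilon$ for $\varepsilon>0$ (so that $1/W_i$ is bounded), and take as test functions
\begin{equation*}
\phi_{1,\varepsilon}=\frac{(W_1^2-W_2^2)^+}{W_1},\qquad \phi_{2,\varepsilon}=\frac{(W_1^2-W_2^2)^+}{W_2}.
\end{equation*}
These vanish on $\Sigma_{\mathcal{D}}^*$ (since $w_1,w_2$ do), and a direct computation of $\nabla\phi_{i,\varepsilon}$ shows they lie in $\mathcal{X}_{\Sigma_{\mathcal{D}}}^s(\mathscr{C}_{\Omega})$, so they are admissible in the subsolution and supersolution inequalities.

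Next I would test the subsolution inequality against $\phi_{1,\varepsilon}$ and the supersolution inequality against $\phi_{2,\varepsilon}$, and subtract. A short gradient calculation on $\{W_1>W_2\}\subset\mathscr{C}_{\Omega}$ yields the Picone-type identity
\begin{equation*}
\nabla w_1\cdot\nabla\phi_{1,\varepsilon}-\nabla w_2\cdot\nabla\phi_{2,\varepsilon}=\Bigl|\nabla w_1-\tfrac{W_1}{W_2}\nabla w_2\Bigr|^2+\Bigl|\tfrac{W_2}{W_1}\nabla w_1-\nabla w_2\Bigr|^2\ \ge\ 0,
\end{equation*}
while outside $\{W_1>W_2\}$ both gradients vanish. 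Hence, multiplying by $\kappa_s y^{1-2s}$ and integrating over $\mathscr{C}_{\Omega}$, the left-hand side of the subtracted inequality is nonnegative. On the trace side the integrand reduces, on $\{w_1(x,0)>w_2(x,0)\}$, to
\begin{equation*}
f(w_1)\phi_{1,\varepsilon}-f(w_2)\phi_{2,\varepsilon}=(W_1^2-W_2^2)\Bigl[\tfrac{f(w_1)}{W_1}-\tfrac{f(w_2)}{W_2}\Bigr].
\end{equation*}

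Letting $\varepsilon\to0^+$ (using dominated convergence with the bound $0\le\phi_{i,\varepsilon}\le\max(w_1,w_2)$ and the regularity of $f$), the trace integrand becomes $(w_1^2-w_2^2)\bigl[\tfrac{f(w_1)}{w_1}-\tfrac{f(w_2)}{w_2}\bigr]$, which is $\le 0$ on $\{w_1(x,0)>w_2(x,0)\}$ by the decreasing monotonicity of $f(t)/t$, and strictly negative on a set of positive measure unless $\{w_1(x,0)>w_2(x,0)\}$ is null. Combining the nonnegativity of the left-hand side with the nonpositivity of the right-hand side forces $|\{w_1(x,0)>w_2(x,0)\}|=0$ as well as the vanishing of the Picone squares on $\{W_1>W_2\}$. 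A standard weak maximum principle for the degenerate operator $-\operatorname{div}(y^{1-2s}\nabla\cdot)$ (applied to $(w_1-w_2)^+$, whose trace is zero a.e.\ and which satisfies the correct mixed boundary conditions) then propagates $w_1\le w_2$ from the trace into the full cylinder.

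The main obstacle I expect is the justification of admissibility and of the limit $\varepsilon\to0^+$: the test functions involve $1/W_i$ and sit in a weighted Sobolev space with changing boundary conditions along $\Gamma^*$, so one has to verify carefully that $\phi_{i,\varepsilon}\in\mathcal{X}_{\Sigma_{\mathcal{D}}}^s(\mathscr{C}_{\Omega})$ and that the gradient expansion producing the Picone identity is valid almost everywhere on $\{W_1>W_2\}$. Once this is settled, the algebraic identity displayed above and the strict monotonicity of $f(t)/t$ close the argument.
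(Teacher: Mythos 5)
The paper does not prove this lemma itself; it cites it directly from \cite[Lemma 5.3]{Carmona2020a}, so there is no internal proof to compare against, and I must assess your argument on its own terms. Your Picone/Br\'ezis--Oswald strategy is exactly the natural one suggested by the hypothesis that $f(t)/t$ is decreasing; the gradient identity you display for $\nabla w_1\cdot\nabla\phi_{1,\varepsilon}-\nabla w_2\cdot\nabla\phi_{2,\varepsilon}$ checks out, the factorization of the trace integrand is correct, and the final reduction (testing the difference of the two variational inequalities against $(w_1-w_2)^+$ once the trace set $\{w_1(x,0)>w_2(x,0)\}$ is shown to be null) is a valid way to propagate the inequality into the cylinder.

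The passage to the limit $\varepsilon\to0^+$, however, has a genuine gap. The majorant you invoke, $0\le\phi_{2,\varepsilon}\le\max(w_1,w_2)$, is false: on $\{W_1>W_2\}$ one has $\phi_{2,\varepsilon}=W_1^2/W_2-W_2$, which is of order $1/\varepsilon$ wherever $w_2$ is small while $w_1$ is not (and since both functions belong to $\mathcal{X}_{\Sigma_{\mathcal{D}}}^s(\mathscr{C}_{\Omega})$ they both vanish on $\Sigma_{\mathcal{D}}^*$, but not a priori at comparable rates, so this regime cannot be excluded). Relatedly, the regularized integrand $(W_1^2-W_2^2)\bigl[\tfrac{f(w_1)}{W_1}-\tfrac{f(w_2)}{W_2}\bigr]$ is \emph{not} pointwise nonpositive for fixed $\varepsilon>0$: the map $t\mapsto f(t)/(t+\varepsilon)$ need not be monotone even when $t\mapsto f(t)/t$ is (take $f(t)=\sqrt{t}$, whose $\varepsilon$-shifted quotient increases on $(0,\varepsilon)$ and decreases on $(\varepsilon,\infty)$). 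Thus neither dominated convergence with your envelope nor a one-sided Fatou argument goes through as written. What is missing is a uniform bound on the quotient $W_1/W_2$ on $\{W_1>W_2\}$; in the present mixed-boundary setting this is precisely what the Hopf-type Strong Maximum Principle of Theorem~\ref{th_stMax} (from \cite{Ortega2021}) provides, by making both $w_1$ and $w_2$ comparable to the torsion-type function $v$ near $\overline{\Sigma}_{\mathcal{D}}$. With that comparability in hand, the admissibility of $\phi_{2,\varepsilon}$ and an integrable majorant both follow, and the rest of your argument closes. As it stands, though, the proof is incomplete without that ingredient or some equivalent truncation device.
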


We address now the proof of Theorem \ref{sublinear}.
\begin{lemma} \label{lem:Lambda}
Let $\Lambda$ be defined by
\begin{equation*}
\Lambda=\sup\{\lambda>0:\eqref{p_lambda}\  \text{has solution} \},
\end{equation*}
\vskip -4pt \noindent
then, $0<\Lambda<\infty$.
\end{lemma}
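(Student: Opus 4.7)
The plan is to prove the two inequalities $0<\Lambda$ and $\Lambda<\infty$ separately, both by classical arguments adapted to the mixed boundary setting.

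\textbf{Step 1 (Lower bound $\Lambda>0$).} I would invoke the sub–supersolution method given by Lemma \ref{existencia}. As subsolution I take $\underline{w}=\varepsilon E_s[\varphi_1]$, where $\varphi_1>0$ is the first eigenfunction of $(-\Delta)^s$ with mixed boundary data, normalized so that $\|\varphi_1\|_\infty=1$. Then $(-\Delta)^s\underline{u}=\varepsilon\lambda_{1,s}\varphi_1$, and the subsolution inequality
\[
\varepsilon\lambda_{1,s}\varphi_1\le \lambda(\varepsilon\varphi_1)^q+(\varepsilon\varphi_1)^{2^*_s-1}
\]
holds pointwise provided $\lambda_{1,s}\le \lambda\,\varepsilon^{q-1}\varphi_1^{q-1}$, which, because $q-1<0$ and $\varphi_1\le 1$, is achieved by taking $\varepsilon$ small enough (for any fixed $\lambda>0$). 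As supersolution I take $\overline{w}=M E_s[\zeta]$ where $\zeta\in H^s_{\Sigma_{\mathcal D}}(\Omega)$ is the unique positive solution of $(-\Delta)^s\zeta=1$ with mixed boundary data; $\zeta$ is bounded by Proposition \ref{prop:bound}. The supersolution condition
\[
M\ge \lambda M^q\zeta^q+M^{2^*_s-1}\zeta^{2^*_s-1}
\]
amounts to $1\ge \lambda M^{q-1}\|\zeta\|_\infty^{q}+M^{2^*_s-2}\|\zeta\|_\infty^{2^*_s-1}$. A direct minimization of the right-hand side in $M>0$ shows that its infimum behaves like $C(q,s,N,\|\zeta\|_\infty)\,\lambda^{\frac{2^*_s-2}{2^*_s-1-q}}$, which tends to $0$ as $\lambda\to 0^+$. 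Thus for $\lambda$ sufficiently small there is an admissible $M=M(\lambda)$; shrinking $\varepsilon$ further if necessary we ensure $\varepsilon\varphi_1\le M\zeta$, and Lemma \ref{existencia} provides a solution. This gives $\Lambda>0$.

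\textbf{Step 2 (Upper bound $\Lambda<\infty$).} Suppose $u$ is a solution of \eqref{p_lambda}. Using $\varphi_1$ as test function in the weak formulation and the symmetry of the spectral operator $(-\Delta)^s$ (equivalently, its spectral decomposition), I integrate by parts to get
\[
\lambda_{1,s}\int_{\Omega}u\,\varphi_1\,dx=\int_{\Omega}\bigl(\lambda u^{q}+u^{2^*_s-1}\bigr)\varphi_1\,dx.
\]
The function $g(t)=\lambda t^{q-1}+t^{2^*_s-2}$ is strictly convex in $\log t$, attains its minimum at $t_\lambda=\bigl(\tfrac{\lambda(1-q)}{2^*_s-2}\bigr)^{1/(2^*_s-1-q)}$, and a computation shows
\[
\min_{t>0}\bigl(\lambda t^{q}+t^{2^*_s-1}\bigr)/t = C(q,s,N)\,\lambda^{\frac{2^*_s-2}{2^*_s-1-q}}\longrightarrow \infty\ \text{as}\ \lambda\to\infty.
\]
Hence there exists $\Lambda^*<\infty$ such that $\lambda t^q+t^{2^*_s-1}>\lambda_{1,s}t$ for all $t>0$ whenever $\lambda>\Lambda^*$. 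Since $u>0$ and $\varphi_1>0$ in $\Omega$, this strict pointwise inequality integrated against $\varphi_1$ contradicts the previous identity. Therefore $\Lambda\le \Lambda^*<\infty$.

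\textbf{Main obstacle.} No step is deep; the only delicate point is the ordering $\underline{w}\le\overline{w}$ of the constructed sub- and supersolution, which I handle by first fixing $M=M(\lambda)$ from the supersolution inequality and then choosing $\varepsilon$ sufficiently small (both $\varphi_1$ and $\zeta$ are positive in $\Omega$ and vanish on $\Sigma_{\mathcal D}$, so $\varepsilon\varphi_1\le M\zeta$ on $\overline\Omega$ can be arranged by taking $\varepsilon$ small relative to $M\inf_{\Omega}(\zeta/\varphi_1)$ — a quantity one controls by the boundary behavior/maximum principle for the mixed spectral fractional Laplacian).
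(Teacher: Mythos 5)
Your proposal is correct and takes essentially the same approach as the paper: the upper bound via testing against the first eigenfunction and minimizing $t\mapsto(\lambda t^q+t^{2_s^*-1})/t$, and the lower bound via the sub/supersolution pair $\varepsilon E_s[\varphi_1]$ and $ME_s[\zeta]$ with $(-\Delta)^s\zeta=1$, combined with Lemma~\ref{existencia}. (Incidentally, your exponent $\tfrac{2_s^*-2}{2_s^*-1-q}$ is the correct one; the paper's $\tfrac{2_s^*-2}{2_s^*-q}$ appears to be a typographical slip, though it does not affect the conclusion since both are positive.)
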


\vspace*{-4pt}

\begin{proof}
Let $(\lambda_1,\varphi_1)$ be the first eigenvalue and a corresponding positive eigenfunction of the fractional Laplacian in $\Omega$. Using $\varphi_1$ as a test function in
problem \eqref{p_lambda}, we have
\vskip -11pt
\begin{equation}\label{eigen}
\int_{\Omega}(\lambda
u^q+u^{2_s^*-1})\varphi_1dx=\lambda_1^s\int_{\Omega}u\varphi_1dx.
\end{equation}
Since there exists constants $c=c(N,s,q)<1$ and $\delta=\frac{2_s^*-2}{2_s^*-q}$ such that $\lambda t^q+t^{2_s^*-1}>c\lambda^{\delta}t$ for any $t>0$, from \eqref{eigen} we deduce
$c\lambda^{\delta}<\lambda_1^s$ and hence $\Lambda<\infty$. In
particular, this also proves that there is no solution to
\eqref{p_lambda} for $\lambda>\Lambda$.

In order to prove  that $\Lambda>0$, we prove, by means of the sub and supersolution technique, the existence of solution to \eqref{extension_problem} for any small positive $\lambda$. Indeed, for $\varepsilon>0$ small enough, $\underline{U}=\varepsilon E_s[\varphi_1]$ is a subsolution to \eqref{extension_problem}. Because of Proposition \ref{prop:bound}, a supersolution can be constructed as an appropiate multiple of the function $G$, the  solution to
\begin{equation*}
        \left\{
        \begin{array}{rlcl}
        \displaystyle -\text{div}(y^{1-2s}\nabla G)&\!\!\!\!=0  & & \mbox{ in } \mathcal{C}_{\Omega} , \\
        \displaystyle B(G)&\!\!\!\!=0   & & \mbox{ on } \partial_L\mathcal{C}_{\Omega} , \\
         \displaystyle \frac{\partial G}{\partial \nu^s}&\!\!\!\!= 1& &  \mbox{ on } \Omega\times\{y=0\}.
         \end{array}
         \right.
\end{equation*}
Note that, as the trace function $g(x)=G(x,0)$ is a solution to
\begin{equation*}
        \left\{
        \begin{tabular}{rcl}
        $(-\Delta)^sg=1$ & &in \ $\Omega$, \\[2pt]
        $B(g)=0$  & &on \ $\partial\Omega$,
        \end{tabular}
        \right.
\end{equation*}
by \cite[Theorem 3.7]{Carmona2020} we have $\|g\|_{L^{\infty}(\Omega)}<+\infty$. Next, since $0<q<1$ we can find $\lambda_0>0$ such that for all $0<\lambda\leq\lambda_0$ there exists $M=M(\lambda)$ such that
\begin{equation}\label{eq:Mlto0}
M\geq\lambda M^q\|g\|_{L^{\infty}(\Omega)}^q+M^{2_s^*-1}\|g\|_{L^{\infty}(\Omega)}^{2_s^*-1}.
\end{equation}
As a consequence, the function $h=Mg$ satisfies $M=(-\Delta)^sh\geq \lambda h^q+h^{2_s^*-1}$ and, by the Maximum Principle (cf. \cite[Lemma 2.3]{Capella2011}), the extension function
$\overline U=E_s[h]$ is a supersolution and $\underline U \leq \overline U$. Applying Lemma \ref{existencia} we conclude the existence of a
solution $U(x,y)$ to problem \eqref{extension_problem}. Therefore, its trace $u(x)=U(x,0)$ is a solution to problem \eqref{p_lambda} with $\lambda<\lambda_0$.
\end{proof} 
\begin{lemma}\label{lem:minimal}
Problem \eqref{p_lambda} has at least a positive minimal solution for every $0<\lambda<\Lambda$. Moreover, the family $\{u_{\lambda}\}$ of minimal solutions is increasing with respect to $\lambda$.
\end{lemma}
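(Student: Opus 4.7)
The plan is to combine the definition of $\Lambda$ with a monotone iteration between an explicit subsolution and a supersolution coming from a larger value of the parameter. Fix $0<\lambda<\Lambda$. By the very definition of $\Lambda$ there is $\mu\in(\lambda,\Lambda)$ such that \eqref{p_lambda} with parameter $\mu$ admits a positive solution $u_\mu$; its $s$-harmonic extension $\overline W:=E_s[u_\mu]$ then satisfies $\frac{\partial \overline W}{\partial \nu^s}=\mu u_\mu^q+u_\mu^{2_s^*-1}\geq \lambda u_\mu^q+u_\mu^{2_s^*-1}$ on $\Omega\times\{y=0\}$, so it is a supersolution for \eqref{extension_problem}. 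For a subsolution I would take $\underline W:=\varepsilon E_s[\varphi_1]$, with $\varphi_1$ the first eigenfunction of $(-\Delta)^s$ with mixed boundary data: using $0<q<1$, the required pointwise inequality reduces to $\lambda_1^s\leq \lambda\varepsilon^{q-1}\varphi_1^{q-1}+\varepsilon^{2_s^*-2}\varphi_1^{2_s^*-2}$, whose first term blows up uniformly in $\Omega$ as $\varepsilon\to 0^+$ (since $\varphi_1\in L^\infty(\Omega)$ and $q-1<0$), so $\underline W$ is a subsolution for every sufficiently small $\varepsilon$; the Hopf-type lower bound on $u_\mu$ furnished by the strong maximum principle of \cite{Ortega2021} then lets us also arrange $\underline W\leq \overline W$ in $\mathscr C_\Omega$.

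I would then produce the minimal solution by monotone iteration: set $W_0:=\underline W$ and, recursively, let $W_{n+1}\in \mathcal X_{\Sigma_{\mathcal D}}^s(\mathscr C_\Omega)$ be the solution of the linear extension problem with Neumann datum $\lambda W_n^q(\cdot,0)+W_n^{2_s^*-1}(\cdot,0)$. The subsolution inequality and the weak maximum principle for mixed extension problems yield $W_1\geq W_0$, and an induction together with the monotonicity of $t\mapsto \lambda t^q+t^{2_s^*-1}$ on $(0,\infty)$ shows that $W_n\leq W_{n+1}\leq \overline W$ for every $n$. Hence $W_n$ converges pointwise to some $W_\lambda\in\mathcal X_{\Sigma_{\mathcal D}}^s(\mathscr C_\Omega)$; Proposition \ref{prop:bound} gives a uniform $L^\infty$ bound, and dominated convergence in the weak formulation lets me pass to the limit, so the trace $u_\lambda:=W_\lambda(\cdot,0)$ solves \eqref{p_lambda} and satisfies $\varepsilon\varphi_1\leq u_\lambda\leq u_\mu$.

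For minimality, take any positive solution $v$ of \eqref{p_lambda}. The strong maximum principle of \cite{Ortega2021} yields $v\geq \eta\varphi_1$ in $\Omega$ for some $\eta>0$; choosing $\varepsilon\leq \eta$ one has $W_0\leq E_s[v]$, and repeating the inductive comparison with $E_s[v]$ in place of $\overline W$ produces $W_n\leq E_s[v]$ for every $n$, hence $u_\lambda\leq v$. Finally, monotonicity in $\lambda$ follows from the same template: for $0<\lambda_1<\lambda_2<\Lambda$ the minimal solution $u_{\lambda_2}$ is a supersolution of $(P_{\lambda_1})$, and running the iteration for $\lambda_1$ with $E_s[u_{\lambda_2}]$ as the upper barrier forces $u_{\lambda_1}\leq u_{\lambda_2}$ by minimality. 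The delicate point I expect is arranging $\varepsilon\varphi_1\leq v$ for an \emph{arbitrary} positive solution $v$; this is exactly the place where the Hopf-type boundary information provided by the mixed strong maximum principle of \cite{Ortega2021} is indispensable, and it is what distinguishes the mixed case from the purely Dirichlet one treated in \cite{Barrios2012}.
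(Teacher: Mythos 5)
Your overall architecture (definition of $\Lambda$, supersolution from a larger parameter, monotone iteration between ordered sub- and supersolutions via Lemma~\ref{existencia}) matches the paper's. The substantive difference is your choice of subsolution: you take $\underline{W}=\varepsilon E_s[\varphi_1]$ and lean on the strong maximum principle of \cite{Ortega2021} to order it below $\overline{W}$ and below any candidate solution $v$. The paper instead takes $V_\lambda$, the unique solution of the \emph{concave} auxiliary problem $(-\Delta)^s v=\lambda v^q$, and invokes the comparison Lemma~\ref{orden}: because $f(t)/t=\lambda t^{q-1}$ is strictly decreasing, $V_\lambda$ lies below \emph{every} positive supersolution (in particular below $W_\mu$ and below the extension of any solution $U=E_s[u]$), with no $\varepsilon$-tuning.

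This difference is not cosmetic, and it opens a genuine gap in your minimality step. Your subsolution depends on the parameter $\varepsilon$, and the iteration limit $u_\lambda$ you construct is the one started from that particular $W_0=\varepsilon E_s[\varphi_1]$. When you then take an arbitrary positive solution $v$ and use \cite{Ortega2021} to get $v\geq\eta\varphi_1$, the threshold $\eta=\eta(v)$ depends on $v$, and you propose ``choosing $\varepsilon\leq\eta$.'' But shrinking $\varepsilon$ changes the starting point of the iteration, hence potentially changes the limit; you have produced some solution $u_\lambda^{(\varepsilon')}\leq v$ for each $v$, but you have not shown that a single, fixed $u_\lambda$ sits below all solutions simultaneously. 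Nothing in the argument as written guarantees that the iteration limit is independent of $\varepsilon$ for $\varepsilon$ small, and without that you only get a $v$-dependent conclusion. The paper avoids this precisely because $V_\lambda$ is a \emph{canonical} subsolution: Lemma~\ref{orden} (applicable since $\lambda t^q/t$ is decreasing, while the full nonlinearity $f_\lambda(t)/t$ is not) gives $V_\lambda\leq U$ for \emph{every} solution $U$, so the iteration from the fixed $V_\lambda$ is bounded above by every solution and its limit is automatically minimal. To repair your argument you would either have to prove that your iteration limit stabilizes as $\varepsilon\downarrow 0$, or replace $\varepsilon\varphi_1$ with a universal subsolution such as $V_\lambda$, which is exactly the paper's device. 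A secondary remark: Theorem~\ref{th_stMax} as stated bounds $\|v/u\|_{L^\infty}$ from above, so to extract a Hopf-type lower bound $v\geq\eta\varphi_1$ you must apply it with the roles of the two functions swapped; that step is fine, but it is worth stating explicitly, and it still only yields a $v$-dependent $\eta$.
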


\begin{proof}
By definition of $\Lambda$, for any $0<\lambda<\Lambda$ there exists $\mu\in (\lambda,\Lambda]$ such that $(P_{\mu}^*)$ has a solution $W_{\mu}$.
It is easy to see that  $W_{\mu}$ is a supersolution for \eqref{extension_problem}.

On the other hand, let $V_{\lambda}$ be the unique solution to problem \eqref{prob:general_f_extension} with $f(t)= \lambda t^q$ (the existence can be deduced by minimization, while uniqueness follows from Lemma~\ref{orden}). It is clear that $V_{\lambda}$ is a subsolution to problem \eqref{extension_problem} and, by Lemma \ref{orden}, we have $V_{\lambda} \leq W_{\mu}$. Thus, by Lemma \ref{existencia}, we conclude that there is a solution to \eqref{extension_problem} and, as a consequence, for the whole open interval $(0,\Lambda)$.

Finally, we prove the existence of a minimal solution for all $0<\lambda<\Lambda$. Given a solution $u$ to \eqref{p_lambda} we take $U=E_s[u]$ with $U$ solution to \eqref{extension_problem}. By Lemma~\ref{orden}, we have $V_{\lambda}\leq U$ with $V_{\lambda}$ solution to \eqref{prob:general_f_extension} with $f(t)= \lambda t^q$. Then, $v_{\lambda}(x)=V_{\lambda}(x,0)$ is a subsolution to \eqref{p_lambda}
and the monotone iteration
\begin{equation*}
\begin{array}{c}
(-\Delta)^s u_{n+1}=\lambda u_{n}^q+u_n^r,\quad  u_n \in H_{\Sigma_{\mathcal{D}}}^s ({\Omega})\quad \mbox{ with } \quad u_0=v_{\lambda},
 \end{array}
\end{equation*}
verifies $u_n\leq U(x,0)=u$ and $u_n\nearrow u_{\lambda}$ with $u_{\lambda}$ solution to problem \eqref{p_lambda}. In particular, $u_\lambda\leq u$ and we conclude that $u_\lambda$ is a minimal solution. The monotonicity follows directly from the first part of the proof, taking $U_\mu=E_s[u_\mu]$ which leads to $u_\lambda\leq u_\mu$ whenever $0<\lambda < \mu\leq \Lambda$.
\end{proof} 

\vspace*{-5pt}

\begin{lemma}\label{lem:solLambda}
Problem \eqref{p_lambda} has at least one solution if $\lambda = \Lambda$.
\end{lemma}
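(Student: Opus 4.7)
The plan is to obtain a solution at $\lambda=\Lambda$ as the monotone limit of the minimal solutions $u_{\lambda_n}$ for a sequence $\lambda_n\nearrow\Lambda$. Set $u_n:=u_{\lambda_n}$, which by Lemma~\ref{lem:minimal} satisfies $u_n\leq u_{n+1}$, and define $u_\Lambda(x):=\lim_n u_n(x)$ (finite a posteriori). One then needs two ingredients: a uniform $H^s_{\Sigma_{\mathcal{D}}}(\Omega)$-bound on $\{u_n\}$, and the possibility of passing to the limit in the weak formulation despite the presence of the critical nonlinearity. Positivity of $u_\Lambda$ is automatic since $u_\Lambda\geq u_{n}>0$ for any $n$.

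For the uniform estimate, I would invoke the semi-stability of minimal solutions, a Brezis--Nirenberg style property (as exploited in \cite{Barrios2012} for the Dirichlet counterpart): for every $\varphi\in H^s_{\Sigma_{\mathcal{D}}}(\Omega)$,
\begin{equation*}
\int_{\Omega}|(-\Delta)^{s/2}\varphi|^{2}\,dx\;\geq\;\int_{\Omega}\bigl(q\lambda_n\,u_n^{q-1}+(2_s^*-1)u_n^{2_s^*-2}\bigr)\varphi^{2}\,dx.
\end{equation*}
Testing with $\varphi=u_n$ (so that the potentially singular $u_n^{q-1}\varphi^{2}=u_n^{q+1}$ is integrable) and subtracting the weak identity for $u_n$ itself gives
\begin{equation*}
(2_s^*-2)\int_{\Omega}u_n^{2_s^*}\,dx\;\leq\;(1-q)\lambda_n\int_{\Omega}u_n^{q+1}\,dx.
\end{equation*}
Since $\lambda_n\leq\Lambda$, combining this with H\"older's inequality $\int u_n^{q+1}\leq|\Omega|^{\alpha}\bigl(\int u_n^{2_s^*}\bigr)^{(q+1)/2_s^*}$ and using $q+1<2_s^*$ yields a uniform bound on $\|u_n\|_{L^{2_s^*}(\Omega)}$, and then the equation $\|u_n\|_{H^s_{\Sigma_{\mathcal{D}}}}^{2}=\lambda_n\int u_n^{q+1}+\int u_n^{2_s^*}$ gives $\|u_n\|_{H^s_{\Sigma_{\mathcal{D}}}}\leq C$ uniformly in $n$.

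With the uniform bound in hand, extract a subsequence $u_n\rightharpoonup u_\Lambda$ in $H^s_{\Sigma_{\mathcal{D}}}(\Omega)$; by monotonicity, the weak limit coincides with the pointwise a.e. limit, and Fatou gives $u_\Lambda\in L^{2_s^*}(\Omega)$. I would then pass to the limit in
\begin{equation*}
\int_{\Omega}(-\Delta)^{s/2}u_n\,(-\Delta)^{s/2}\psi\,dx=\int_{\Omega}\bigl(\lambda_n u_n^{q}+u_n^{2_s^*-1}\bigr)\psi\,dx,\qquad\psi\in H^s_{\Sigma_{\mathcal{D}}}(\Omega).
\end{equation*}
The left-hand side converges by weak convergence. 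On the right, $u_n^{q}\nearrow u_\Lambda^{q}$ in $L^{q+1}$ by monotone convergence, and $\{u_n^{2_s^*-1}\}$ is uniformly bounded in $L^{2_s^*/(2_s^*-1)}(\Omega)$ and converges a.e.\ to $u_\Lambda^{2_s^*-1}$, hence weakly in $L^{2_s^*/(2_s^*-1)}$; testing against $\psi\in L^{2_s^*}(\Omega)$ allows passage to the limit. Thus $u_\Lambda$ is a positive weak solution of \eqref{p_lambda} at $\lambda=\Lambda$.

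The main obstacle is the uniform $H^s_{\Sigma_{\mathcal{D}}}$-bound: the critical growth prevents us from using the weak formulation alone, so the argument genuinely relies on the semi-stability of minimal solutions. Establishing this property rigorously in the mixed boundary spectral-fractional setting (and justifying the use of $u_n$ as a test function in the stability inequality, given the singularity of $u_n^{q-1}$ near $\Sigma_{\mathcal{D}}$) is the delicate point; it can be handled either by approximating $f$ by regularized concave--convex nonlinearities, or by the standard monotone-iteration characterization of minimal solutions adapted to the mixed setting as in \cite{Barrios2012}.
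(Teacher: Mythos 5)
Your proposal is correct and follows essentially the same route as the paper: take $\lambda_n\nearrow\Lambda$, use the semi-stability inequality \eqref{ineq:posit} of minimal solutions with $v=u_n$ together with the weak formulation to deduce a uniform $H^s_{\Sigma_{\mathcal D}}(\Omega)$-bound on $\{u_n\}$, then pass to a weak limit. The paper packages the uniform bound through the intermediate conclusion $I_{\lambda_n}(u_n)<0$ (citing \cite[Theorem 2.1]{Ambrosetti1994}) whereas you subtract the identities directly and close via H\"older, which is an equivalent and arguably more explicit bookkeeping; you are also more careful than the paper in spelling out how to pass to the limit in the critical term (monotonicity giving a.e.\ convergence plus uniform $L^{2_s^*/(2_s^*-1)}$ bounds).
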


To prove Lemma \ref{lem:solLambda} we need the following result that guarantees that the linearized equation corresponding to \eqref{p_lambda} has non-negative eigenvalues at the minimal solution.

\vspace*{-3pt}

\begin{proposition}
Let $u_{\lambda}\in H_{\Sigma_{\mathcal{D}}}^s(\Omega)$ be the minimal solution to problem \eqref{p_lambda} and let us define $a_{\lambda}=a_{\lambda}(x)=\lambda q u_{\lambda}^{q-1}+(2_s^*-1) u_{\lambda}^{2_s^*-2}$. Then, the operator $[(-\Delta)^s-a_\lambda(x)]$ with mixed boundary conditions has a first eigenvalue $\nu_1\geq0$.
In particular, it follows that
\begin{equation}\label{ineq:posit}
\int_{\Omega}\left(|(-\Delta)^{s/2}v|^2-a_{\lambda}v^2\right)dx\geq0,\quad  \mbox{for any } \ \,  v\in H_{\Sigma_{\mathcal{D}}}^s(\Omega).
\end{equation}
\end{proposition}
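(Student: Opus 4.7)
My plan is to prove $\nu_1\geq 0$ by contradiction; once this is established, the inequality \eqref{ineq:posit} is immediate from the variational characterization
$$\nu_1=\inf_{v\in H_{\Sigma_{\mathcal{D}}}^s(\Omega),\,v\not\equiv 0}\frac{\int_\Omega\bigl(|(-\Delta)^{s/2}v|^2-a_\lambda v^2\bigr)\,dx}{\int_\Omega v^2\,dx},$$
since then its numerator is $\geq 0$ for every admissible $v$. The existence of $\nu_1$, together with a strictly positive first eigenfunction $\psi\in H_{\Sigma_{\mathcal{D}}}^s(\Omega)$ solving $(-\Delta)^s\psi=(a_\lambda+\nu_1)\psi$ with mixed boundary conditions, follows from the compact embedding $H_{\Sigma_{\mathcal{D}}}^s(\Omega)\hookrightarrow L^2(\Omega)$, the self-adjointness of $L_\lambda=(-\Delta)^s-a_\lambda$, and the Strong Maximum Principle for mixed fractional problems from \cite{Ortega2021}.

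Assume now, for contradiction, that $\nu_1<0$. The heart of the argument is a Taylor expansion of $g(t)=\lambda t^q+t^{2_s^*-1}$ around $u_\lambda$. Setting $\overline{u}_\varepsilon:=u_\lambda-\varepsilon\psi$, the choice $a_\lambda=g'(u_\lambda)$ yields
$$(-\Delta)^s\overline{u}_\varepsilon-g(\overline{u}_\varepsilon)=-\varepsilon\nu_1\psi+O(\varepsilon^2),$$
and since $\nu_1<0$, the leading term $-\varepsilon\nu_1\psi$ is strictly positive and dominates the remainder, so that for $\varepsilon>0$ small $\overline{u}_\varepsilon$ is a positive strict supersolution of \eqref{p_lambda}. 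Arguing as in the proof of Lemma \ref{lem:Lambda} and using $0<q<1$, a sufficiently small multiple $\underline u=\delta\varphi_{1}^{\mathrm{mix}}$ of the first mixed Dirichlet--Neumann eigenfunction of $(-\Delta)^s$ is in turn a subsolution of \eqref{p_lambda}; after shrinking $\delta$ further, a Hopf-type boundary comparison lets one arrange $\underline u\leq\overline{u}_\varepsilon$ throughout $\Omega$.

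Applying Lemma \ref{existencia} to the ordered pair $\underline u\leq\overline{u}_\varepsilon$ then produces a positive solution $\tilde u$ of \eqref{p_lambda} with $\tilde u\leq\overline{u}_\varepsilon=u_\lambda-\varepsilon\psi<u_\lambda$ on $\{\psi>0\}$, contradicting the minimality of $u_\lambda$ established in Lemma \ref{lem:minimal}; hence $\nu_1\geq 0$. The main technical obstacle I foresee is the comparison $\underline u\leq\overline{u}_\varepsilon$: both functions vanish on $\Sigma_{\mathcal{D}}$, so this step requires controlling their relative rates of decay at the Dirichlet boundary through a Hopf-type estimate adapted to the spectral fractional Laplacian with mixed boundary data, rather than a purely interior comparison.
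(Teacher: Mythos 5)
Your argument is the classical supersolution contradiction, which is almost certainly the argument in \cite[Proposition 5.1]{Carmona2020a} to which the paper defers. The structure (existence of a positive principal eigenfunction $\psi$, perturbation $u_\lambda-\varepsilon\psi$, sub/supersolution pair via Lemma \ref{existencia}, contradiction with minimality) is correct in outline, and you rightly flag that the comparison $\underline{u}\le u_\lambda-\varepsilon\psi$ is the delicate step near $\Sigma_{\mathcal D}$. Two further steps, however, are glossed over in a way that would not survive scrutiny as written.

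First, the domination claim $-\varepsilon\nu_1\psi + O(\varepsilon^2)>0$ is not valid with a uniform $O(\varepsilon^2)$: near $\Sigma_{\mathcal D}$ the leading term $-\varepsilon\nu_1\psi$ degenerates to zero while a uniform $O(\varepsilon^2)$ does not, so the remainder would win there. What is actually needed is a pointwise bound of the form $|R(x)|\le C\,\varepsilon^2\,\psi(x)$, where $R=g(u_\lambda)-g(u_\lambda-\varepsilon\psi)-\varepsilon g'(u_\lambda)\psi$. This does hold, but for structural reasons that deserve spelling out: splitting $g=g_1+g_2$ with $g_1(t)=\lambda t^q$ concave and $g_2(t)=t^{2_s^*-1}$ convex, concavity of $g_1$ gives $g_1(u_\lambda)-g_1(u_\lambda-\varepsilon\psi)\ge\varepsilon g_1'(u_\lambda)\psi$ directly (the concave remainder has the favourable sign, with no Taylor expansion and no constant at all), while for $g_2$ one has $|R_2|=\tfrac12\varepsilon^2 g_2''(\xi)\psi^2$ with $\xi$ between $u_\lambda-\varepsilon\psi$ and $u_\lambda$; using $\|u_\lambda\|_{L^\infty}<\infty$ (Proposition \ref{prop:bound}) and a boundary comparability $\psi\asymp u_\lambda$ near $\Sigma_{\mathcal D}$ (the same Hopf-type input you already invoke for the subsolution), one gets $g_2''(\xi)\psi^2\le C\psi$ and hence $|R_2|\le C\varepsilon^2\psi$. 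This gives the desired domination for $\varepsilon$ small, but it is not a free $O(\varepsilon^2)$.

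Second, because $0<q<1$, the potential $a_\lambda=\lambda q\,u_\lambda^{q-1}+(2_s^*-1)u_\lambda^{2_s^*-2}$ is \emph{singular} on $\Sigma_{\mathcal D}$ where $u_\lambda$ vanishes, so the existence of $\nu_1$ and of a first eigenfunction $\psi$ is not an immediate consequence of the compact embedding and self-adjointness. One needs the quadratic form $v\mapsto\int_\Omega\bigl(|(-\Delta)^{s/2}v|^2-a_\lambda v^2\bigr)dx$ to be bounded below on the unit $L^2$-sphere, i.e.\ a form-boundedness estimate $\int_\Omega a_\lambda v^2\,dx\le (1-\theta)\|v\|^2_{H^s_{\Sigma_{\mathcal D}}}+C\|v\|^2_{L^2}$, which in turn hinges on the boundary decay rate of $u_\lambda$ and a fractional Hardy-type inequality adapted to $H^s_{\Sigma_{\mathcal D}}(\Omega)$. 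This is precisely the kind of technicality that the reference \cite{Carmona2020a} is being cited for, and it should at least be acknowledged; as stated, the variational characterization of $\nu_1$ is asserted before it has been justified.
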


\begin{proof}
The proof follows verbatim that of \cite[Proposition 5.1]{Carmona2020a}, so we omit the details.
\end{proof} 

\medskip 

\begin{proof}[Proof of Lemma \ref{lem:solLambda}]
Let $\{\lambda_n\}$ be a sequence such that $\lambda_n \nearrow \Lambda$ and denote by $u_n=u_{\lambda_n}$ the minimal solution to problem $(P_{\lambda_n})$.
Let $U_n=E_s[u_n]$, then
\begin{equation*}
I_{\lambda_n}(u_n)=\frac{1}{2}\int_{{\Omega}}|(-\Delta)^{\frac{s}{2}}u_n|^2dx-\frac{\lambda_n}{q+1}\int_{\Omega} |u_n|^{q+1}dx-\frac{1}{2_s^*}\int_{\Omega}|u_n|^{2_s^*}dx.
\end{equation*}
Moreover, since $u_n$ is a solution to $(P_{\lambda_n})$, it also satisfies
\vskip -10pt
\begin{equation*}
\int_{{\Omega}}|(-\Delta)^{\frac{s}{2}}u_n|^2dx=\lambda_n\int_{\Omega} |u_n|^{q+1}dx+\int_{\Omega}|u_n|^{2_s^*}dx.
\end{equation*}
On the other hand, using \eqref{ineq:posit} with $v=u_n$,
\begin{equation*}
\int_{{\Omega}}|(-\Delta)^{\frac{s}{2}}u_n|^2dx-\lambda_nq\int_{\Omega} |u_n|^{q+1}dx-(2_s^*-1)\int_{\Omega}|u_n|^{2_s^*}dx\geq0.
\end{equation*}
As in \cite[Theorem 2.1]{Ambrosetti1994}, we  conclude $I_{\lambda_n}(u_n)<0$. Moreover, as $I_{\lambda_n}'(u_n) = 0$, then
${\|u_n\|}_{H_{\Sigma_{\mathcal{D}}}^s(\Omega)}\leq C$. Hence, there exists a weakly convergent subsequence $u_n\to u\in
H_{\Sigma_{\mathcal{D}}}^s(\Omega)$ and, thus, $u$ is a weak solution of \eqref{p_lambda} for $\lambda=\Lambda$.
\end{proof}
Having proved the first three items of Theorem \ref{sublinear}, next we focus on proving the existence of a second solution. As commented above, first we show that the minimal solution is a local minimum of the energy functional $I_\lambda$; so we can use the Mountain Pass Theorem, obtaining a minimax PS sequence. To find a second solution, we prove next a local PS$_c$ condition for energy levels $c$ under a critical level $c_{\mathcal{D-N}}^*$. Let us stress that, in order to obtain a second solution, it is fundamental that the minimal solution is a minimum of the energy functional $I_\lambda$ or, equivalently, its $s$-harmonic extension $w_\lambda=E_s[u_\lambda]$ is a minimum of $J_\lambda$.

Following the ideas of \cite{Colorado2003} we begin with a separation Lemma. Let us consider $v$ solution to 
\begin{equation}\label{p_v}
        \left\{
        \begin{tabular}{rcl}
        $(-\Delta)^sv=g$ & &in  $\Omega$,\\[2pt]
        $B(v)=0\mkern+0.5mu$& &on  $\partial\Omega$,
        \end{tabular}
        \right.\quad\text{with } g\in L^p(\Omega),\ p>\frac{N}{s}.
\end{equation}
The following result is proven in \cite{Ortega2021}.

\begin{theorem}\cite[Theorem 1.2]{Ortega2021}\label{th_stMax}
Let $u$ be the solution to 
\begin{equation*}
        \left\{
        \begin{tabular}{rcl}
        $(-\Delta)^su=f$ & &in $\Omega$, \\[2pt]
        $B(u)=0$  & &on $\partial\Omega$,
        \end{tabular}
        \right.
\end{equation*}
with $f\in L^\infty(\Omega)$, $f\gneq0$ and let $v$ be the solution to \eqref{p_v}. Then, there exists a constant $C>0$ such that 
\begin{equation*}
\left\| \ \frac vu   \  \right \|_ {L^\infty(\Omega)}\leq C\|g\|_{L^p(\Omega)},
\end{equation*}
with the constant $C$ depending on $N$, $p$, $s$, $\Omega$, $\Sigma_{\mathcal{D}}$, $\|u\|_{L^{\infty}(\Omega)}$, $\|f\|_{L^{\infty}(\Omega)}$ and $1/(\int_{\Omega}f(z)d(z)dz)$ where $d(x)=\textrm{dist}(x,\partial\Omega)$.
\end{theorem}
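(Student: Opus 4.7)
The plan is to combine standard $L^\infty$ regularity for $v$ with a sharp Hopf-type lower bound for $u$ near the Dirichlet boundary, so that the ratio $v/u$ is controlled pointwise. First, since $g \in L^p(\Omega)$ with $p > N/s$, the $L^p$--$L^\infty$ regularity result \cite[Theorem 3.7]{Carmona2020} yields a global bound $\|v\|_{L^\infty(\Omega)} \leq C_1 \|g\|_{L^p(\Omega)}$. Away from $\Sigma_{\mathcal{D}}$, the strong maximum principle applied to $u$ (which solves $(-\Delta)^s u = f \gneq 0$ with mixed data) produces $u \geq c_0 > 0$ on any relatively compact subset of $\Omega \cup \Sigma_{\mathcal{N}}$, so in this interior-and-Neumann region the quotient $|v/u|$ is immediately dominated by $\|g\|_{L^p(\Omega)}$.

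The heart of the argument is the boundary layer near $\Sigma_{\mathcal{D}}$. I would pass to the extension cylinder, setting $U = E_s[u]$ and $V = E_s[v]$ in $\mathscr{C}_\Omega$, and compare both with a common barrier $\varphi \in \mathcal{X}_{\Sigma_{\mathcal{D}}}^s(\mathscr{C}_\Omega)$ adapted to the mixed boundary geometry, e.g.\ a multiple of the torsion-type function $G$ used in the proof of Lemma~\ref{lem:Lambda}, or of the first mixed eigenfunction. The two complementary estimates to be established are a lower bound $u(x) \geq c(f)\,\varphi(x,0)$ in a one-sided neighborhood of $\Sigma_{\mathcal{D}}$, and an upper bound $|v(x)| \leq C\,\|g\|_{L^p(\Omega)}\,\varphi(x,0)$ in the same neighborhood. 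Dividing pointwise yields the desired bound on $v/u$ close to $\Sigma_{\mathcal{D}}$, which glues with the previous paragraph to conclude.

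The upper bound on $v$ would come from a weak comparison of $V$ against a suitable multiple of the extension of $\varphi$, exploiting the boundary flux identity $\partial_{\nu^s} V = g$ together with the $L^\infty$ control already in hand to dominate lower order terms. The lower bound on $u$, with the prescribed inverse dependence on $\int_\Omega f(z)\,d(z)\,dz$, is obtained either by representing $u$ through the Green's function of the mixed problem or, equivalently, by pairing the identity $(-\Delta)^s u = f$ against the barrier $\varphi$: since the trace of $\varphi$ vanishes like $d(x) = \mathrm{dist}(x,\partial\Omega)$ near $\Sigma_{\mathcal{D}}$, the weighted integral $\int_\Omega f\cdot d\,dx$ appears naturally and provides the quantitative positivity of $u$ that the Hopf-type estimate demands.

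The main obstacle is uniformity of these boundary estimates across the transition submanifold $\Gamma = \Sigma_{\mathcal{D}} \cap \overline{\Sigma_{\mathcal{N}}}$, where the standard $d^s$-type boundary behavior familiar from pure Dirichlet problems need not hold and where both $U$ and $V$ can exhibit a more delicate vanishing profile. Choosing barriers that correctly respect this transition, and tracking the constants so that they depend only on the data listed in the statement rather than on any implicit smoothness of $u$ or $v$ near $\Gamma$, is the most delicate step and dictates the precise form of $\varphi$ one must construct.
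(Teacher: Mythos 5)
The paper states this theorem as a citation to \cite[Theorem 1.2]{Ortega2021} and does not reproduce a proof, so there is no internal proof against which to check your sketch; I can only assess it on its own terms. The overall architecture you propose --- interior positivity of $u$ by the strong maximum principle, then a two-sided barrier comparison near $\Sigma_{\mathcal{D}}$ with a common barrier $\varphi$, and gluing --- is the standard scheme for Hopf-type quotient estimates, and your identification of $\int_{\Omega}f(z)\,d(z)\,dz$ as arising from pairing $f$ against a barrier that vanishes at linear rate in $d$ at the Dirichlet boundary is the right heuristic. That said, three points need repair. First, the phrase about ``the standard $d^s$-type boundary behavior familiar from pure Dirichlet problems'' conflates the integral and spectral fractional Laplacians: for the spectral operator used throughout this paper (and with $s>\frac12$), Dirichlet boundary vanishing of solutions with bounded data is linear in $d(x)$, not of order $d(x)^s$, which is exactly why the constant in the statement involves $\int_{\Omega}f(z)\,d(z)\,dz$ and not a fractional power of $d$. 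Second, the upper bound $|v|\leq C\,\|g\|_{L^p(\Omega)}\,\varphi(x,0)$ near $\Sigma_{\mathcal{D}}$ does not follow from the global $L^\infty$ control of \cite[Theorem 3.7]{Carmona2020}; what is missing is a boundary regularity estimate asserting that a solution with datum in $L^p(\Omega)$, $p>N/s$, vanishes at rate $d$ near $\Sigma_{\mathcal{D}}$ with a constant controlled by $\|g\|_{L^p(\Omega)}$, and you would need to prove this separately. Third, you correctly identify the transition manifold $\Gamma=\Sigma_{\mathcal{D}}\cap\overline{\Sigma}_{\mathcal{N}}$ as the crux of the matter, but you leave it completely open: producing one barrier $\varphi$ for which $u\gtrsim c(f)\,\varphi$ and $|v|\lesssim\|g\|_{L^p(\Omega)}\,\varphi$ hold \emph{uniformly up to} $\Gamma$ is precisely the hard technical content of \cite{Ortega2021}, and without an explicit construction of such a barrier and a quantitative comparison argument near $\Gamma$, the proof is incomplete.
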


Let us define the class
\begin{equation*}
\mathfrak{C}_v(\Omega)=\left\{\omega\in\mathcal{C}^0(\overline{\Omega})\cap H_{\Sigma_{\mathcal{D}}}^s(\Omega): \left\|\frac{\omega}{v}\right\|_{L^{\infty}(\Omega)}<\infty\right\}.
\end{equation*}
By application of Theorem \ref{th_stMax} with $g\equiv1$ we obtain the following separation result in the $\mathfrak{C}_v(\Omega)$-topology. Then, from now  $v$ denotes the solution to \eqref{p_v} with $g\equiv1$.

\begin{lemma}\label{lem:separation}
Let $0<\lambda_0<\lambda_1<\lambda_2<\Lambda$ and $u_{\lambda_0}, u_{\lambda_1}$ and $u_{\lambda_2}$ be the minimal solutions to \eqref{p_lambda} with $\lambda=\lambda_0, \lambda_1$ and $\lambda_2$ respectively. Then, if $X=\{\omega\in \mathfrak{C}_v(\Omega): u_{\lambda_0}\leq\omega\leq u_{\lambda_2}\}$, there exists $\varepsilon>0$ such that 
\begin{equation*}
u_{\lambda_1}+\varepsilon B_1(0)\subset X,
\end{equation*}
where $B_1(0)=\{\omega\in \mathfrak{C}_v(\Omega):\left\|\frac{\omega}{v}\right\|_{L^{\infty}(\Omega)}<1\}$.
\end{lemma}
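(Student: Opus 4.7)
The plan is to deduce the separation from pointwise estimates
\begin{equation*}
u_{\lambda_1}-u_{\lambda_0} \geq c_1\, v, \qquad u_{\lambda_2}-u_{\lambda_1} \geq c_2\, v \qquad \text{on }\Omega,
\end{equation*}
for some $c_1,c_2>0$, which isolate $u_{\lambda_1}$ from $u_{\lambda_0}$ and $u_{\lambda_2}$ by an amount proportional to the reference function $v$ of \eqref{p_v}. Once these are in hand, any $\omega\in\mathfrak{C}_v(\Omega)$ with $\|\omega/v\|_{L^\infty(\Omega)}<\varepsilon$ satisfies $|\omega|\leq\varepsilon\, v$ pointwise, so choosing $\varepsilon = \tfrac12\min(c_1,c_2)$ produces
\begin{equation*}
u_{\lambda_0}\ \leq\ u_{\lambda_1}-\tfrac{c_1}{2}\, v\ \leq\ u_{\lambda_1}+\omega\ \leq\ u_{\lambda_1}+\tfrac{c_2}{2}\, v\ \leq\ u_{\lambda_2},
\end{equation*}
which is exactly the inclusion claimed (modulo the membership $u_{\lambda_1}+\omega\in\mathfrak{C}_v(\Omega)$).

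The heart of the argument is producing those two lower bounds, and Theorem~\ref{th_stMax} is tailor-made for this. Setting $w_1 := u_{\lambda_1}-u_{\lambda_0}$, subtraction of the equations satisfied by $u_{\lambda_0},u_{\lambda_1}$ and regrouping yields
\begin{equation*}
(-\Delta)^s w_1 \;=\; (\lambda_1-\lambda_0)\, u_{\lambda_1}^q \;+\; \lambda_0\bigl(u_{\lambda_1}^q-u_{\lambda_0}^q\bigr) \;+\; \bigl(u_{\lambda_1}^{2_s^*-1}-u_{\lambda_0}^{2_s^*-1}\bigr),\qquad B(w_1)=0 \text{ on } \partial\Omega.
\end{equation*}
The monotonicity furnished by Lemma~\ref{lem:minimal} makes all three summands nonnegative, and the first is strictly positive on $\Omega$ because $\lambda_1>\lambda_0$ and $u_{\lambda_1}>0$. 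Proposition~\ref{prop:bound} places the right-hand side in $L^\infty(\Omega)$, hence applying Theorem~\ref{th_stMax} to the pair $(w_1,v)$ with $g\equiv 1$ delivers a finite constant $C_1$ with $\|v/w_1\|_{L^\infty(\Omega)}\leq C_1$, i.e.\ $w_1\geq C_1^{-1}v$. The identical splitting produces $c_2>0$ for $w_2:=u_{\lambda_2}-u_{\lambda_1}$.

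It remains to certify that $u_{\lambda_1}+\omega$ belongs to $\mathfrak{C}_v(\Omega)$ when $\omega$ does, which reduces to $u_{\lambda_1}\in\mathfrak{C}_v(\Omega)$; this in turn follows from a second invocation of Theorem~\ref{th_stMax} with the roles of $u$ and $v$ swapped, using the $L^\infty$ bound on $\lambda_1 u_{\lambda_1}^q + u_{\lambda_1}^{2_s^*-1}$ from Proposition~\ref{prop:bound}, together with continuity of $u_{\lambda_1}$ on $\overline{\Omega}$ guaranteed by the regularity theory underlying that proposition.

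The only real subtlety lies in setting up the difference equation for $w_i$ so that the source is a nontrivial, nonnegative, bounded function; this is precisely the role played by the monotonicity of the minimal family and by Proposition~\ref{prop:bound}. Everything else reduces to a direct application of the quotient estimate provided by Theorem~\ref{th_stMax}.
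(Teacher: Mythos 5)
Your proof is correct and follows essentially the same route as the paper: form the differences $u_{\lambda_1}-u_{\lambda_0}$ and $u_{\lambda_2}-u_{\lambda_1}$, verify via the monotonicity of the minimal family and Proposition~\ref{prop:bound} that each solves a fractional problem with a nonnegative, nontrivial, bounded right-hand side, and then apply Theorem~\ref{th_stMax} to obtain the pointwise lower bounds by a positive multiple of $v$. Your explicit regrouping of the right-hand side into three visibly nonnegative pieces and the separate check that $u_{\lambda_1}\in\mathfrak{C}_v(\Omega)$ (via a second invocation of Theorem~\ref{th_stMax} with the roles of $u$ and $v$ exchanged) are a bit more careful than the paper's write-up, which leaves both points implicit, but the argument is the same.
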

\begin{proof}
By the Maximum Principle (cf. \cite[Lemma 2.3]{Capella2011}), $u_{\lambda_0}<u_{\lambda_1}<u_{\lambda_2}$. Next, note that the function $\underline{v}\vcentcolon= u_{\lambda_1}-u_{\lambda_0}$ solves $(-\Delta)^s\underline{v}=f_{\lambda_1}(u_{\lambda_1})-f_{\lambda_0}(u_{\lambda_0})$ and the function $\overline{v}\vcentcolon= u_{\lambda_2}-u_{\lambda_1}$ solves $(-\Delta)^s\overline{v}=f_{\lambda_2}(u_{\lambda_2})-f_{\lambda_1}(u_{\lambda_1})$, in both cases with the same boundary condition as in problem \eqref{p_lambda}. Since $f_\lambda(t)$ is increasing the right-hand sides are nonnegative and, by Proposition \ref{prop:bound}, they are also bounded. Then, by Theorem \ref{th_stMax}, there exists $\varepsilon>0$ such that 
\begin{equation*}
u_{\lambda_0}(x)+\varepsilon v(x)\leq u_{\lambda_1}(x)\leq u_{\lambda_2}(x)-\varepsilon v(x),\quad\text{for all } x\in\overline{\Omega},
\end{equation*}
and the result follows.
\end{proof}
Let us define the functions 
\begin{equation*}
\overline{F}_\lambda(u)=\int_{0}^{u}\overline{f}_\lambda(t)dt\qquad\text{with}\qquad
\overline{f}_\lambda(t)=\left\{
        \begin{tabular}{lr}
         $\lambda t^q+t^{2_s^*-1}$&if $t\geq0$, \\[5pt]
         $0$&if $t<0$,
        \end{tabular}
        \right.   
\end{equation*}
and the energy functional 
\begin{equation*}
\overline{I}_\lambda(u)=\frac{1}{2}\|u\|_{H_{\Sigma_{\mathcal{D}}}^s(\Omega)}^2-\int_{\Omega}\overline{F}_\lambda(u)dx.
\end{equation*}
Clearly $\overline{I}_\lambda(u)=I_\lambda(u)$, if $u>0$. The Euler--Lagrange equation corresponding to $\overline{I}_\lambda(u)$ is 
\begin{equation}\label{p_lambda+}\tag{$P_\lambda^+$}
        \left\{
        \begin{tabular}{lcl}
        $(-\Delta)^su=\lambda (u^+)^q+(u^+)^{2_s^*-1}$ & &in  $\Omega$, \\
        $\mkern+22muB(u)=0$& &on  $\partial\Omega$,
        \end{tabular}
        \right.
\end{equation}
where $u^+=\max\{0,u\}$. If $u$ is a nontrivial solution to \eqref{p_lambda+}, i.e., $u$ is a nontrivial critical point of $\overline{I}_\lambda$ then, by the Maximum Principle (cf. \cite[Lemma 2.3]{Capella2011}), $u$ is strictly positive in $\Omega$ and, therefore, it is also a solution to \eqref{p_lambda}, i.e., a positive critical point of $I_\lambda$; and vice-versa, if $u$ is a solution to \eqref{p_lambda}, then $u>0$ in $\Omega$ and $u^+=u$, so that $u$ is a solution to \eqref{p_lambda+}. 
\begin{proposition}\label{prop:min_C}
For all $\lambda\in(0,\Lambda)$ there exists a solution $u_0\in H_{\Sigma_{\mathcal{D}}}^s(\Omega)$ to \eqref{p_lambda} which is a local minimum of the energy functional $\overline{I}_{\lambda}$ in $\mathfrak{C}_v(\Omega)$, i.e., there exists $r>0$ such that 
\begin{equation*}
\overline{I}_{\lambda}(u_0)\leq \overline{I}_{\lambda}(\varphi),\quad \text{for all } \omega\in \mathfrak{C}_v(\Omega)\text{ with } \left\|\frac{u_0-\omega}{v}\right\|_{L^{\infty}(\Omega)}\leq r.
\end{equation*}
\end{proposition}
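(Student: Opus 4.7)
The plan is to fix $\lambda\in(0,\Lambda)$, pick auxiliary values $0<\lambda_0<\lambda<\lambda_2<\Lambda$, and use the corresponding minimal solutions $u_{\lambda_0}$, $u_{\lambda_2}$ from Lemma~\ref{lem:minimal}. Since $f_\lambda(t)=\lambda t^q+t^{2_s^*-1}$ is increasing in both $\lambda$ and $t$, one immediately checks that $u_{\lambda_0}$ is a strictly positive, bounded subsolution of \eqref{p_lambda} and $u_{\lambda_2}$ a supersolution, with $u_{\lambda_0}<u_{\lambda_2}$ in $\Omega$ by \cite[Lemma~2.3]{Capella2011}. The candidate $u_0$ will be obtained as the global minimizer of a suitably truncated functional, and then upgraded to a local minimum of $\overline{I}_\lambda$ in the $\mathfrak{C}_v(\Omega)$-topology via the technique of Lemma~\ref{lem:separation}.

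Concretely, I would introduce the truncated nonlinearity
\begin{equation*}
\tilde{f}_\lambda(x,t)=\begin{cases} f_\lambda(u_{\lambda_0}(x)),& t\le u_{\lambda_0}(x),\\ f_\lambda(t),& u_{\lambda_0}(x)\le t\le u_{\lambda_2}(x),\\ f_\lambda(u_{\lambda_2}(x)),& t\ge u_{\lambda_2}(x),\end{cases}
\end{equation*}
its primitive $\tilde{F}_\lambda(x,t)=\int_0^t\tilde{f}_\lambda(x,\tau)\,d\tau$, and the associated energy $\tilde{I}_\lambda(u)=\tfrac12\|u\|_{H_{\Sigma_{\mathcal{D}}}^s(\Omega)}^2-\int_\Omega \tilde{F}_\lambda(x,u)\,dx$. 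Because $u_{\lambda_0},u_{\lambda_2}\in L^\infty(\Omega)$ by Proposition~\ref{prop:bound}, the function $\tilde{f}_\lambda$ is bounded, so $\tilde{I}_\lambda$ is coercive and weakly lower semicontinuous on $H_{\Sigma_{\mathcal{D}}}^s(\Omega)$, and the direct method of the calculus of variations produces a global minimizer $u_0$. A standard comparison argument — testing the Euler--Lagrange equation of $\tilde{I}_\lambda$ against $(u_{\lambda_0}-u_0)^+$ and $(u_0-u_{\lambda_2})^+$ and applying \cite[Lemma~2.3]{Capella2011} — yields $u_{\lambda_0}\le u_0\le u_{\lambda_2}$ a.e., so that $\tilde{f}_\lambda(\cdot,u_0)=f_\lambda(u_0)$, $u_0$ is a positive solution of \eqref{p_lambda}, and $\tilde{I}_\lambda\equiv\overline{I}_\lambda$ on the order interval $X=\{\omega\in\mathfrak{C}_v(\Omega):u_{\lambda_0}\le\omega\le u_{\lambda_2}\}$. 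In particular $u_0$ already minimizes $\overline{I}_\lambda$ over $X$.

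To promote this to a $\mathfrak{C}_v$-local minimum I would repeat the proof of Lemma~\ref{lem:separation} with the triple $(u_{\lambda_0},u_0,u_{\lambda_2})$ in place of the triple of minimal solutions used there. The differences $u_0-u_{\lambda_0}$ and $u_{\lambda_2}-u_0$ solve mixed boundary fractional problems whose right-hand sides $f_\lambda(u_0)-f_{\lambda_0}(u_{\lambda_0})$ and $f_{\lambda_2}(u_{\lambda_2})-f_\lambda(u_0)$ belong to $L^\infty(\Omega)$ and are bounded below by $(\lambda-\lambda_0)u_{\lambda_0}^q>0$ and $(\lambda_2-\lambda)u_0^q>0$ respectively. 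Theorem~\ref{th_stMax} then delivers $\varepsilon>0$ with $u_{\lambda_0}+\varepsilon v\le u_0\le u_{\lambda_2}-\varepsilon v$ in $\overline{\Omega}$, i.e.\ $u_0+\varepsilon B_1(0)\subset X$; on this $\mathfrak{C}_v$-neighborhood $\overline{I}_\lambda$ coincides with $\tilde{I}_\lambda$, and the global minimality of $u_0$ for $\tilde{I}_\lambda$ forces the local minimality of $\overline{I}_\lambda$ at $u_0$ with radius $r=\varepsilon$. The main obstacle is precisely this last adaptation: one must verify that the proof of Lemma~\ref{lem:separation} uses only the strict ordering $\lambda_0<\lambda<\lambda_2$ and the integrability/positivity properties of the resulting right-hand sides, and not the fact that the middle function is itself a \emph{minimal} solution; once this is checked, the argument transfers without further changes.
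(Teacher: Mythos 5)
Your proposal is correct and follows essentially the same route as the paper: fix $\lambda$ strictly between two admissible values, truncate the nonlinearity between the corresponding minimal solutions, minimize the coercive truncated functional, apply Theorem~\ref{th_stMax} to obtain a strict $\mathfrak{C}_v$-separation $u_{\lambda_0}+\varepsilon v\le u_0\le u_{\lambda_2}-\varepsilon v$, and observe that the truncated and original functionals agree (up to an additive constant) on the order interval. The intermediate comparison step you insert to establish $u_{\lambda_0}\le u_0\le u_{\lambda_2}$ before invoking Theorem~\ref{th_stMax} is harmless but actually unnecessary, since the truncated right-hand side $f_\lambda^*(x,u_0)$ is pointwise bounded between $f_\lambda(u_{\lambda_0}(x))$ and $f_\lambda(u_{\lambda_2}(x))$ regardless of where $u_0$ falls, which already gives the strictly positive, bounded sources that the maximum principle requires.
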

\begin{proof} 
Fixed $0<\lambda_1<\lambda<\lambda_2<\Lambda$ let us consider the corresponding minimal solutions $u_{\lambda_1}$ and $u_{\lambda_2}$. Note that $u_{\lambda_1}\leq u_{\lambda_2}$ and $u_{\lambda_1}$, $u_{\lambda_2}$ are a subsolution and a supersolution to \eqref{p_lambda}. Moreover, setting $z\vcentcolon= u_{\lambda_2}-u_{\lambda_1}$, then 
\begin{equation*}
        \left\{
        \begin{tabular}{lcl}
        $(-\Delta)^sz\geq0$ & &in  $\Omega$, \\
        $\mkern+22muB(z)=0$& &on  $\partial\Omega$.
        \end{tabular}
        \right.
\end{equation*}
Next, let us take the functions
\begin{equation*}
F_\lambda^*(u)=\int_{0}^{u}f_\lambda^*(x,t)dt\qquad\text{with}\qquad
f_\lambda^*(x,t)=\left\{
        \begin{tabular}{ll}
         $\overline{f}_{\lambda}(u_{\lambda_1})$&if $t\leq u_{\lambda_1}$, \\[1pt]
         $\overline{f}_{\lambda}(t)$&if $u_{\lambda_1}<t< u_{\lambda_2}$, \\[1pt]
         $\overline{f}_{\lambda}(u_{\lambda_2})$&if $t\geq u_{\lambda_2}$, 
        \end{tabular}
        \right.
\end{equation*}
and the energy functional 
\begin{equation*}
I_\lambda^*(u)=\frac{1}{2}\|u\|_{H_{\Sigma_{\mathcal{D}}}^s(\Omega)}^2-\int_{\Omega}F_\lambda^*(u)dx.
\end{equation*}
It is clear that $I_\lambda^*$ attains its global minimum at some $u_0\in H_{\Sigma_{\mathcal{D}}}^s(\Omega)$, that is 
\begin{equation}\label{ineq:min}
I_\lambda^*(u_0)\leq I_{\lambda}^*(u),\quad\text{for all }u\in H_{\Sigma_{\mathcal{D}}}^s(\Omega).
\end{equation}
Moreover,
\begin{equation*}
        \left\{
        \begin{tabular}{lcl}
        $(-\Delta)^su_0=f_\lambda^*(x,u_0)$, & &in  $\Omega$, \\
        $\mkern+22muB(u_0)=0$& &on  $\partial\Omega$.
        \end{tabular}
        \right.
\end{equation*}
Because of Theorem \ref{th_stMax} we get, for some $\varepsilon>0$,
\begin{equation*}
u_{\lambda_1}(x)+\varepsilon v(x)\leq u_0(x)\leq u_{\lambda_2}(x)-\varepsilon v(x),
\end{equation*}
for all $x\in\overline{\Omega}$, so that 
\begin{equation*}
u_{\lambda_1}<u_0<u_{\lambda_2},
\end{equation*}
for all $x\in\Omega$. Then, taking $ \left\| \frac{\omega-u_0}{v}\right\|_{L^{\infty}(\Omega)}\leq\tau$ with $0<\tau<\varepsilon$, we get $u_{\lambda_1}\leq\omega\leq u_{\lambda_2}$ in $\Omega$. Since $I_{\lambda}^*(\omega)-\overline{I}_{\lambda}(\omega)$ is zero for all $u_{\lambda_1}\leq\omega\leq u_{\lambda_2}$, by \eqref{ineq:min} we conclude 
\begin{equation*}
\overline{I}_\lambda(\omega)=I_\lambda^*(\omega)\geq I_\lambda^*(u_0)=\overline{I}_{\lambda}(u_0),
\end{equation*}
for all $\omega\in \mathfrak{C}_v(\Omega)$ with $\left\| \frac{\omega-u_0}{v}\right\|_{L^{\infty}(\Omega)}\leq\tau$ and the result follows.
\end{proof}
\begin{proposition}\label{prop:minC_minH}
Let $u_0\in H_{\Sigma_{\mathcal{D}}}^s(\Omega)$ be a local minimum of the energy functional $I_\lambda$ in $\mathfrak{C}_v(\Omega)$, i.e., there exists $r>0$ such that 
\begin{equation*}
I_{\lambda}(u_0)\leq I_{\lambda}(u_0+\omega),\quad \text{for all } \omega\in \mathfrak{C}_v(\Omega)\text{ with } \left\|\frac{\omega}{v}\right\|_{L^{\infty}(\Omega)}\leq r.
\end{equation*}
Then, $u_0$ is a local minimum of the energy functional $I_\lambda$ in $H_{\Sigma_{\mathcal{D}}}^s(\Omega)$, i.e., there exists $\delta>0$ such that 
\begin{equation*}
I_{\lambda}(u_0)\leq I_{\lambda}(u_0+\varphi),\quad \text{for all } \varphi\in H_{\Sigma_{\mathcal{D}}}^s(\Omega)\text{ with } \|\varphi\|_{H_{\Sigma_{\mathcal{D}}}^s(\Omega)}\leq \delta.
\end{equation*}
\end{proposition}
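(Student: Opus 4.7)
The plan is to adapt the classical Brezis-Nirenberg argument that converts a local minimum in a stronger topology into a local minimum in the $H^s$-energy topology, with the weighted $L^\infty$ space $\mathfrak{C}_v(\Omega)$ playing here the role of $C^1$. I argue by contradiction: if the conclusion fails, there exists $\{\varphi_n\}\subset H_{\Sigma_{\mathcal{D}}}^s(\Omega)$ with $\|\varphi_n\|_{H_{\Sigma_{\mathcal{D}}}^s(\Omega)}\to 0$ and $I_\lambda(u_0+\varphi_n)<I_\lambda(u_0)$. Replacing $I_\lambda$ by $\overline{I}_\lambda$ (which coincide near the positive function $u_0$), for every small $\varepsilon>0$ the constrained infimum
$$m_\varepsilon \;:=\; \inf\bigl\{\overline{I}_\lambda(u_0+\varphi) \,:\, \varphi\in H_{\Sigma_{\mathcal{D}}}^s(\Omega),\ \|\varphi\|_{H_{\Sigma_{\mathcal{D}}}^s(\Omega)}\le \varepsilon\bigr\}$$
satisfies $m_\varepsilon<\overline{I}_\lambda(u_0)$.

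First, I produce an attained minimizer $v_\varepsilon=u_0+\varphi_\varepsilon$. Any minimizing sequence is bounded in $H_{\Sigma_{\mathcal{D}}}^s(\Omega)$, converges weakly up to subsequences, and strongly in $L^{q+1}(\Omega)$ by the compact Sobolev embedding. Since $\overline{I}_\lambda(u_0)<0$ (as shown in the proof of Lemma \ref{lem:solLambda}), the level $m_\varepsilon$ is strictly below the concentration-compactness threshold $c_{\mathcal{D}-\mathcal{N}}^*>0$ for $\varepsilon$ small, so a Brezis-Lieb decomposition together with Theorem \ref{concompact} rules out bubbling of the critical term and upgrades the weak convergence to strong convergence in $H_{\Sigma_{\mathcal{D}}}^s(\Omega)$. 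The minimizer satisfies a constrained Euler-Lagrange equation: either $\|\varphi_\varepsilon\|_{H_{\Sigma_{\mathcal{D}}}^s(\Omega)}<\varepsilon$ and $v_\varepsilon$ is a free critical point, or $\|\varphi_\varepsilon\|_{H_{\Sigma_{\mathcal{D}}}^s(\Omega)}=\varepsilon$ and a Lagrange multiplier $\mu_\varepsilon\le 0$ appears, giving
$$(1-\mu_\varepsilon)(-\Delta)^s v_\varepsilon \;=\; \overline{f}_\lambda(v_\varepsilon)\;-\;\mu_\varepsilon\,(-\Delta)^s u_0 \quad\text{in }\Omega, \qquad B(v_\varepsilon)=0 \text{ on }\partial\Omega.$$

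Next, I bootstrap regularity uniformly in $\varepsilon$. Proposition \ref{prop:bound} applied to $u_0$ and $v_\varepsilon$ yields uniform $L^\infty$-bounds, using that $\|v_\varepsilon\|_{H_{\Sigma_{\mathcal{D}}}^s(\Omega)}\le \|u_0\|_{H_{\Sigma_{\mathcal{D}}}^s(\Omega)}+\varepsilon$. Recalling $(-\Delta)^s u_0=\overline{f}_\lambda(u_0)$, subtracting the two equations produces the clean cancellation
$$(-\Delta)^s(v_\varepsilon-u_0)\;=\;\frac{1}{1-\mu_\varepsilon}\bigl[\overline{f}_\lambda(v_\varepsilon)-\overline{f}_\lambda(u_0)\bigr]\;=:\;g_\varepsilon,$$
in which the Lagrange-multiplier term $\mu_\varepsilon(-\Delta)^s u_0$ is absorbed entirely. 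Since $|1-\mu_\varepsilon|^{-1}\le 1$, $\overline{f}_\lambda$ is continuous with controlled growth, and $v_\varepsilon\to u_0$ in every $L^p(\Omega)$ (by the uniform $L^\infty$-bound combined with strong $H^s$-convergence), the right-hand side $g_\varepsilon$ satisfies $\|g_\varepsilon\|_{L^p(\Omega)}\to 0$ for every $p>N/s$. Applying Theorem \ref{th_stMax} to $w_\varepsilon:=v_\varepsilon-u_0$ then yields
$$\Bigl\|\frac{w_\varepsilon}{v}\Bigr\|_{L^\infty(\Omega)}\;\le\; C\,\|g_\varepsilon\|_{L^p(\Omega)}\;\longrightarrow\;0,$$
so for $\varepsilon$ small enough $w_\varepsilon\in rB_1(0)\subset \mathfrak{C}_v(\Omega)$, while $\overline{I}_\lambda(v_\varepsilon)=m_\varepsilon<I_\lambda(u_0)$; this contradicts the $\mathfrak{C}_v$-local minimality of $u_0$.

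The main obstacle is the attainment of $m_\varepsilon$: the critical exponent forces us to exclude bubbling of the critical term, which hinges on the strict ordering $\overline{I}_\lambda(u_0)<0<c_{\mathcal{D}-\mathcal{N}}^*$ together with the Brezis-Lieb/Theorem \ref{concompact} machinery. Once attainment is secured, the algebraic cancellation in the equation for the difference $v_\varepsilon-u_0$ conveniently eliminates any dependence on the Lagrange multiplier and reduces the remaining work to the quantitative Hopf-type estimate supplied by Theorem \ref{th_stMax}.
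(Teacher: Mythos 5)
Your overall strategy mirrors the paper's: argue by contradiction, minimize the functional on small $H^s_{\Sigma_{\mathcal{D}}}(\Omega)$-balls centered at $u_0$, extract a constrained Euler--Lagrange equation with a Lagrange multiplier, bootstrap uniform $L^\infty$ bounds, and feed the equation satisfied by the difference $v_\varepsilon - u_0$ into Theorem \ref{th_stMax} to convert $H^s$-smallness into $\mathfrak{C}_v$-smallness, contradicting the $\mathfrak{C}_v(\Omega)$-minimality. The cancellation $g_\varepsilon = (1-\mu_\varepsilon)^{-1}[\overline f_\lambda(v_\varepsilon) - \overline f_\lambda(u_0)]$ is a nice streamlining, and is essentially what the paper uses when it bounds $\|(\omega_\varepsilon - u_0)/v\|_{L^\infty(\Omega)}$ by $\sup_\Omega|f_\lambda^\varepsilon(\omega_\varepsilon) - f_\lambda^0(u_0)|$.

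The one genuine gap is in the attainment of $m_\varepsilon$. You cite the chain $m_\varepsilon < \overline I_\lambda(u_0) < 0 < c^*_{\mathcal{D-N}}$ together with Brezis--Lieb and Theorem \ref{concompact}, but this does not close. The compactness threshold is relative to the energy of the weak limit $w_*$ — one needs $m_\varepsilon < \overline I_\lambda(w_*) + c^*_{\mathcal{D-N}}$, which is automatic and therefore carries no information about bubble size — not an absolute comparison against $c^*_{\mathcal{D-N}}$. More importantly, the mechanism by which Lemma \ref{lem:PS} kills atoms is to test the Euler--Lagrange equation against cut-offs $\phi_\varepsilon z_n$; a raw minimizing sequence over a closed ball is not a Palais--Smale sequence and carries no such equation, so Theorem \ref{concompact} alone merely describes the limit measures without forbidding atoms. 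What actually excludes concentration here is the smallness of the ball: any atom satisfies $\nu_{i_0} \le \widetilde{S}(\Sigma_{\mathcal{D}})^{-2_s^*/2}(2\varepsilon)^{2_s^*}$, which lies below the critical mass $\widetilde{S}(\Sigma_{\mathcal{D}})^{N/2s}$ once $\varepsilon$ is small, and then the Brezis--Lieb defect $\tfrac12\mu_{i_0} - \tfrac1{2_s^*}\nu_{i_0}\ge \tfrac12\widetilde{S}(\Sigma_{\mathcal{D}})\nu_{i_0}^{2/2_s^*} - \tfrac1{2_s^*}\nu_{i_0}>0$ strictly raises the energy, contradicting minimality; this quantitative estimate is what your proof should supply. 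The paper instead avoids the issue entirely by truncating $f_\lambda$ at height $j(\varepsilon)$: the truncated $F_{\lambda,j}$ has linear growth, so $\int_\Omega F_{\lambda,j}(u)\,dx$ is weakly continuous on bounded sets and $I_{\lambda,j}$ is weakly lower semicontinuous, hence the constrained minimum is attained by the direct method with no concentration-compactness at all. (As a side remark, $\overline I_\lambda(u_0)<0$ is proved in Lemma \ref{lem:solLambda} only for the minimal solutions and not for an arbitrary $\mathfrak{C}_v$-local minimizer, but in any case this sign is not actually needed.)
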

\begin{proof}
Arguing by contradiction we assume that, 
\begin{equation}\label{eq:contradiction}
\forall \varepsilon>0,\ \exists\, u_{\varepsilon}\in B_{\varepsilon} (u_0)
\ \text{such that } I_{\lambda}(u_{\varepsilon})<I_{\lambda}(u_0),
\end{equation}
where $\displaystyle B_{\varepsilon}(u_0)=\{u\in H_{\Sigma_{\mathcal{D}}}^s(\Omega): \|u-u_0\|_{H_{\Sigma_{\mathcal{D}}}^s(\Omega)}\leq \varepsilon\}$. For every $j>0$ let us consider the truncation map
\begin{equation*}
T_j(t)=\left\{
        \begin{tabular}{ll}
         $t$&if $0<t<j$, \\[1pt]
         $j$&if $t\geq j$,
        \end{tabular}
        \right.
\end{equation*}
let us set 
\begin{equation*}
F_{\lambda,j}(u)=\int_{0}^{u}f_{\lambda,j}(t)dt\qquad\text{where}\qquad
f_{\lambda,j}(t)=f_{\lambda}(T_j(t)),
\end{equation*}
with $f_\lambda(t)=\lambda|t|^{q-1}t+|t|^{2_s^*-1}t$, and the energy functional 
\begin{equation*}
I_{\lambda,j}(u)=\frac{1}{2}\|u\|_{H_{\Sigma_{\mathcal{D}}}^s(\Omega)}^2-\int_{\Omega}F_{\lambda,j}(u)dx.
\end{equation*}
For each $u\in H_{\Sigma_{\mathcal{D}}}^s(\Omega)$, we have $I_{\lambda,j}(u)\to I_{\lambda}(u)$ as $j\to\infty$. Hence, for any $\varepsilon>0$ there exists $j=j(\varepsilon)$ big enough such that $I_{\lambda,j(\varepsilon)}(u_{\varepsilon})<I_{\lambda}(u_0)$. Clearly, $\min_{B_{\varepsilon}(u_0)}I_{\lambda,j(\varepsilon)}$ is attained at some point, say $\omega_{\varepsilon}$. Therefore, we have
\begin{equation*}
I_{\lambda,j(\varepsilon)}(\omega_{\varepsilon})\leq I_{\lambda,j(\varepsilon)}(u_{\varepsilon})<I_{\lambda}(u_0).
\end{equation*}
Now we want to prove that 
\begin{equation}\label{eq:conv}
\omega_{\varepsilon}\to u_0\quad\text{in }\mathfrak{C}_v(\Omega)\text{ as }\varepsilon\to0, 
\end{equation}
and we will arrive to a contradiction with \eqref{eq:contradiction}. The Euler--Lagrange equation satisfied by $\omega_{\varepsilon}$ involves a Lagrange multiplier $\xi_{\varepsilon}$ such that
\begin{equation*}
\langle I_{\lambda,j(\varepsilon)}'(\omega_{\varepsilon}),\psi\rangle_{{\small H^{-s}(\Omega), H_{\Sigma_{\mathcal{D}}}^s(\Omega)}}=\xi_{\varepsilon}\langle \omega_{\varepsilon},\psi \rangle_{H_{\Sigma_{\mathcal{D}}}^s(\Omega)},\quad\forall\psi\in H_{\Sigma_{\mathcal{D}}}^s(\Omega),
\end{equation*}
that is, 
\begin{equation}\label{eq:multiplier}
\int_{\Omega}(-\Delta)^{\frac{s}{2}}\omega_{\varepsilon}(-\Delta)^{\frac{s}{2}}\psi dx-\int_{\Omega}f_{\lambda,j(\varepsilon)}(\omega_{\varepsilon})\psi dx=\xi_{\varepsilon}\int_{\Omega}(-\Delta)^{\frac{s}{2}}\omega_{\varepsilon}(-\Delta)^{\frac{s}{2}}\psi dx,
\end{equation}
for all $\psi\in H_{\Sigma_{\mathcal{D}}}^s(\Omega)$. Since $\omega_{\varepsilon}$ is a minimum of $I_{\lambda,j(\varepsilon)}$, it holds that, for $0<\varepsilon\ll1$,
\begin{equation}\label{eq:multiplier2}
\xi_{\varepsilon}=\frac{\langle I_{\lambda,j(\varepsilon)}'(\omega_{\varepsilon}),\omega_{\varepsilon}\rangle_{{\small H^{-s}(\Omega), H_{\Sigma_{\mathcal{D}}}^s(\Omega)}}}{\|\omega_{\varepsilon}\|_{H_{\Sigma_{\mathcal{D}}}^s(\Omega)}^2}\qquad\text{and}\qquad\xi_{\varepsilon}\to0\quad\text{as }\varepsilon\to0.
\end{equation}
Let us observe that, by \eqref{eq:multiplier}, the function $\omega_{\varepsilon}$ satisfies the problem
\begin{equation*}
        \left\{
        \begin{tabular}{lcl}
        $(-\Delta)^s\omega_{\varepsilon}=\frac{1}{1-\xi_{\varepsilon}}f_{\lambda,j(\varepsilon)}(\omega_{\varepsilon})\vcentcolon=f_{\lambda}^{\varepsilon}(\omega_{\varepsilon})$, & &in  $\Omega$, \\
        $\mkern+22muB(\omega_{\varepsilon})=0$& &on  $\partial\Omega$.
        \end{tabular}
        \right.
\end{equation*}
Clearly $\|\omega_{\varepsilon}\|_{H_{\Sigma_{\mathcal{D}}}^s(\Omega)}\leq C$. Moreover, by Proposition \ref{prop:bound}, we have $\|\omega_{\varepsilon}\|_{L^{\infty}(\Omega)}\leq C_1$ for a constant $C_1>0$ independent of $\varepsilon$. Thus, by \eqref{eq:multiplier2}, we get $\|f_{\lambda}^{\varepsilon}(\omega_{\varepsilon})\|_{L^{\infty}(\Omega)}\leq C_2$ so that \cite[Theorem 1.2]{Carmona2020} implies $\|\omega_{\varepsilon}\|_{\mathcal{C}^{\gamma}}\leq C_3$ for some $\gamma\in(0,\frac12)$ and a constant $C_3>0$ independent of $\varepsilon$. Here $\mathcal{C}^{\gamma}$ denotes the space of Hölder continuous functions with exponent $\gamma$. Then, by the Ascoli--Arzelá Theorem, there exist a subsequence, still denoted by $\omega_{\varepsilon}$, such that $\omega_{\varepsilon}\to u_0$ uniformly as $\varepsilon\to0$. Since $\omega_{\varepsilon}\to u_0$ in $H_{\Sigma_{\mathcal{D}}}^s(\Omega)$, we have $\omega_0=u_0$ and, by the Maximum Principle and Theorem \ref{th_stMax}, we get 
\begin{equation*}
\left\| \frac{\omega_{\varepsilon}-u_0}{v} \right\|_{L^{\infty}(\Omega)}\leq C \sup\limits_{\Omega}|f_{\lambda}^{\varepsilon}(\omega_{\varepsilon})-f_{\lambda}^{0}(u_0)|\to0\quad\text{as }\varepsilon\to0,
\end{equation*}
and \eqref{eq:conv} is proved.
\end{proof}
Propositions \ref{prop:min_C} and \ref{prop:minC_minH} provide us with a local minimum of $I_\lambda$ in $H_{\Sigma_{\mathcal{D}}}^s(\Omega)$ which will be denoted by $u_0$. Now, fixed $\lambda>0$, we look for a second solution of the form $\hat{u}=u_0+\tilde{u}$ with $u_0$ the above solution and $\tilde{u}>0$. The corresponding problem satisfied by $\tilde{u}$ is 
\begin{equation*}
        \left\{
        \begin{tabular}{lcl}
        $(-\Delta)^s\tilde{u}=\lambda(u_0+\tilde{u})^q-\lambda u_0^q+(u_0+\tilde{u})^{2_s^*-1}-u_0^{2_s^*-1}$ & &\mbox{in } $\Omega$, \\
        $\mkern+21mu B(\tilde{u})=0\mkern+0.5mu$& &\mbox{on } $\partial\Omega$,
        \end{tabular}
        \right.
\end{equation*}
Let us define the functions
\begin{equation*}
G_\lambda(u)=\int_{0}^{u}g_{\lambda}(x,t)dt\quad\text{with}\quad
g_\lambda(x,t)=\left\{
        \begin{tabular}{lr}
         $\lambda(u_0+t)^q-\lambda u_0^q+(u_0+t)^{2_s^*-1}-u_0^{2_s^*-1}$&\mbox{if } $t\geq0$, \\[5pt]
         $0$&\mbox{if } $t<0$,
        \end{tabular}
        \right.
\end{equation*}
and the energy functional 
\begin{equation*}
\widetilde{I}_\lambda(u)=\frac{1}{2}\|u\|_{H_{\Sigma_{\mathcal{D}}}^s(\Omega)}^2-\int_{\Omega}G_\lambda(u)dx.
\end{equation*}
Note that, since $u\in H_{\Sigma_{\mathcal{D}}}^s(\Omega)$, then $\widetilde{I}_\lambda$ is well defined. Finally let us define the moved problem 
\begin{equation}\label{p_tilde}
        \left\{
        \begin{tabular}{lcl}
        $(-\Delta)^su=g_\lambda(x,u)$ & &\mbox{in } $\Omega$, \\
        $\mkern+22mu B(u)=0\mkern+0.5mu$& &\mbox{on } $\partial\Omega$,
        \end{tabular}
        \right.
        \tag{$\widetilde{P}_\lambda$}
\end{equation}
Thus, if $\tilde{u}\not\equiv0$ is a critical point of $\widetilde{I}_\lambda$ then it is a solution to $(\widetilde{P}_\lambda)$. Moreover, by the Maximum Principle (cf. \cite[Lemma 2.3]{Capella2011}), we have $\tilde{u}>0$ in $\Omega$. Hence $\hat{u}=u_0+\tilde{u}$ will be a second solution to \eqref{p_lambda} with a concave nonlinearity $0<q<1$. Then, in order to prove the existence of a second solution we have to analyze the existence of nontrivial critical points of the functional $\widetilde{I}_\lambda$.

First we have the following.

\begin{lemma}\label{lem:min_0}
$u=0$ is a local minimum of $\widetilde{I}_\lambda$ in $H_{\Sigma_{\mathcal{D}}}^s(\Omega)$.
\end{lemma}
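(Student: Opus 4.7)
The plan is to reduce the local minimality of $u=0$ for $\widetilde{I}_\lambda$ to that of $u_0$ for $I_\lambda$, which was just established in Propositions \ref{prop:min_C}--\ref{prop:minC_minH}. The key algebraic identity is that, for every nonnegative $u\in H_{\Sigma_{\mathcal{D}}}^s(\Omega)$,
\begin{equation*}
\widetilde{I}_\lambda(u)=I_\lambda(u_0+u)-I_\lambda(u_0).
\end{equation*}
This follows from a straightforward expansion of the right-hand side: using that $u_0$ is a critical point of $I_\lambda$, the linear term $\int_\Omega (-\Delta)^{s/2}u_0\,(-\Delta)^{s/2}u\,dx$ equals $\int_\Omega(\lambda u_0^q+u_0^{2_s^*-1})u\,dx$, and the remaining differences of powers of $u_0+u$ and $u_0$ match term by term with the defining formula for $g_\lambda$ and its primitive $G_\lambda$.

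To handle a general (sign-changing) $u$, I would first reduce to its positive part $u^+$. Since $g_\lambda(x,t)=0$ for $t<0$, the primitive $G_\lambda$ vanishes on $(-\infty,0]$, so $G_\lambda(u)=G_\lambda(u^+)$ pointwise and
\begin{equation*}
\widetilde{I}_\lambda(u)-\widetilde{I}_\lambda(u^+)=\frac{1}{2}\bigl(\|u\|_{H_{\Sigma_{\mathcal{D}}}^s(\Omega)}^2-\|u^+\|_{H_{\Sigma_{\mathcal{D}}}^s(\Omega)}^2\bigr).
\end{equation*}
It then suffices to show that $\|u^+\|_{H_{\Sigma_{\mathcal{D}}}^s(\Omega)}\leq\|u\|_{H_{\Sigma_{\mathcal{D}}}^s(\Omega)}$, which I would prove via the Caffarelli--Silvestre extension: if $w=E_s[u]$, then $w^+\in\mathcal{X}_{\Sigma_{\mathcal{D}}}^s(\mathscr{C}_\Omega)$ has trace $u^+$ and $|\nabla w^+|^2\leq|\nabla w|^2$ a.e.; since $E_s[u^+]$ minimizes the $\mathcal{X}_{\Sigma_{\mathcal{D}}}^s$-norm among all extensions of $u^+$, the isometry \eqref{norma2} yields
\begin{equation*}
\|u^+\|_{H_{\Sigma_{\mathcal{D}}}^s(\Omega)}=\|E_s[u^+]\|_{\mathcal{X}_{\Sigma_{\mathcal{D}}}^s(\mathscr{C}_\Omega)}\leq\|w^+\|_{\mathcal{X}_{\Sigma_{\mathcal{D}}}^s(\mathscr{C}_\Omega)}\leq\|w\|_{\mathcal{X}_{\Sigma_{\mathcal{D}}}^s(\mathscr{C}_\Omega)}=\|u\|_{H_{\Sigma_{\mathcal{D}}}^s(\Omega)}.
\end{equation*}

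Assembling these pieces, let $\delta>0$ be the neighborhood radius provided by Proposition \ref{prop:minC_minH}. For every $u$ with $\|u\|_{H_{\Sigma_{\mathcal{D}}}^s(\Omega)}\leq\delta$ one then has $\|u^+\|_{H_{\Sigma_{\mathcal{D}}}^s(\Omega)}\leq\delta$ as well, so
\begin{equation*}
\widetilde{I}_\lambda(u)\geq\widetilde{I}_\lambda(u^+)=I_\lambda(u_0+u^+)-I_\lambda(u_0)\geq 0=\widetilde{I}_\lambda(0),
\end{equation*}
which is the desired local minimality. The only step requiring genuine care is the norm inequality $\|u^+\|_{H^s}\leq\|u\|_{H^s}$, which is not a priori obvious in the spectral fractional setting but is transparent through the minimality property of the $s$-harmonic extension.
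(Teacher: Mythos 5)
Your proof is correct and rests on the same algebraic identity as the paper's
(namely $\widetilde{I}_\lambda(u^+)=\overline{I}_\lambda(u_0+u^+)-\overline{I}_\lambda(u_0)$), but the
route you take from there is genuinely different in its organization. The paper
first establishes that $0$ is a local minimum of $\widetilde{I}_\lambda$ in the
$\mathfrak{C}_v(\Omega)$-topology (invoking Proposition \ref{prop:min_C} for $u$
with $\|u/v\|_{L^\infty}$ small, so that $\|u^+/v\|_{L^\infty}\leq\|u/v\|_{L^\infty}$ keeps $u_0+u^+$ in
the good $\mathfrak{C}_v$-neighborhood of $u_0$) and then upgrades to the
$H^s_{\Sigma_\mathcal{D}}$-topology by a second application of Proposition
\ref{prop:minC_minH}, this time to $\widetilde{I}_\lambda$. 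You instead work
directly in $H^s_{\Sigma_\mathcal{D}}$: you use the already-established $H^s$-local
minimality of $u_0$ for $\overline{I}_\lambda$ (Propositions \ref{prop:min_C} and
\ref{prop:minC_minH} combined) and feed it the perturbation $u^+$. The price of
this shortcut is that you need the norm comparison
$\|u^+\|_{H^s_{\Sigma_\mathcal{D}}(\Omega)}\leq\|u\|_{H^s_{\Sigma_\mathcal{D}}(\Omega)}$ so that
$\|u\|\leq\delta$ implies $\|u^+\|\leq\delta$, and you supply this correctly via
the extension: $w^+=(E_s[u])^+$ extends $u^+$, $|\nabla w^+|\leq|\nabla w|$ a.e.,
and $E_s[u^+]$ is the norm-minimal extension, so \eqref{norma2} closes the chain.
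This explicit step is in fact an improvement on the paper's write-up: the paper's
displayed computation tacitly uses
$\|u\|^2_{H^s}=\|u^+\|^2_{H^s}+\|u^-\|^2_{H^s}$, which is \emph{not} an identity in the
spectral fractional setting (it is only an inequality $\geq$, precisely by the
extension argument you give), though the inequality points the right way so the
paper's conclusion stands. Both proofs ultimately rely on the same implicit
observation that Proposition \ref{prop:minC_minH} applies to functionals with the
same structural growth as $I_\lambda$ (the paper applies it to $\widetilde{I}_\lambda$, you apply it to
$\overline{I}_\lambda$), so neither route is free of that mild adaptation. Net
assessment: correct, organized more economically, and it patches a small gap in
the paper's own display.
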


\begin{proof}
Note that, because of  Proposition \ref{prop:minC_minH}, it is sufficient to prove that $u=0$ is a local minimum of $\widetilde{I}_\lambda$. Let $u^+$ be the positive part of $u$. Since
\begin{equation*}
G_{\lambda}(u^+)=\overline{F}_{\lambda}(u_0+u^+)-\overline{F}(u_0)-\left(\lambda u_0^q+u_0^{2_s^*-1}\right)u^+,
\end{equation*}
then
\begin{equation*}
\begin{split}
\widetilde{I}_\lambda(u)
=&\frac{1}{2}\|u^+\|_{H_{\Sigma_{\mathcal{D}}}^s(\Omega)}^{2}+\frac{1}{2}\|u^-\|_{H_{\Sigma_{\mathcal{D}}}^s(\Omega)}^{2}\\
&-\int_{\Omega}\overline{F}_{\lambda}(u_0+u^+)\, dx+\int_{\Omega}\overline{F}(u_0)\, dx+\int_{\Omega}\left(\lambda u_0^q-u_0^{2_s^*-1}\right)u^+dx.
\end{split}
\end{equation*}
On the other hand
\begin{equation*}
\begin{split}
\overline{I}_\lambda(u_0+u^+)=& \frac{1}{2}\|u_0+u^+\|_{H_{\Sigma_{\mathcal{D}}}^s(\Omega)}^{2}-\int_{\Omega}\overline{F}_{\lambda}(u_0+u^+) dx\\
=&\frac{1}{2}\|u_0\|_{H_{\Sigma_{\mathcal{D}}}^s(\Omega)}^{2}+\frac{1}{2}\|u^+\|_{H_{\Sigma_{\mathcal{D}}}^s(\Omega)}^{2}+\int_{\Omega}(-\Delta)^{\frac{s}{2}}u_0(-\Delta)^{\frac{s}{2}}u^+dx\!-\!\int_{\Omega}\overline{F}_{\lambda}(u_0+u^+)dx\\
=& \frac{1}{2}\|u_0\|_{H_{\Sigma_{\mathcal{D}}}^s(\Omega)}^{2}+\frac{1}{2}\|u^+\|_{H_{\Sigma_{\mathcal{D}}}^s(\Omega)}^{2}+\int_{\Omega}\left(\lambda u_0^q+u_0^{2_s^*-1}\right)u^+dx\!-\!\int_{\Omega}\overline{F}_{\lambda}(u_0+u^+)dx.
\end{split}
\end{equation*}
Hence
\begin{equation*}
\begin{split}
\widetilde{I}_\lambda(u)=&\frac{1}{2}\|u^-\|_{H_{\Sigma_{\mathcal{D}}}^s(\Omega)}^{2}+\overline{I}_\lambda(u_0+u)-\frac{1}{2}\|u_0\|_{H_{\Sigma_{\mathcal{D}}}^s(\Omega)}^{2}+\int_{\Omega}\overline{F}_{\lambda}(u_0)dx\\
=&\frac{1}{2}\|u^-\|_{H_{\Sigma_{\mathcal{D}}}^s(\Omega)}^{2}+\overline{I}_\lambda(u_0+u)-\overline{I}_\lambda(u_0).
\end{split}
\end{equation*}
Using Proposition \ref{prop:min_C}, it follows that 
\begin{equation*}
\widetilde{I}_\lambda(u)\geq\frac{1}{2}\|u^-\|_{H_{\Sigma_{\mathcal{D}}}^s(\Omega)}^{2}\geq0,
\end{equation*}
provided $\displaystyle \left\| \frac{u}{v}\right\|_{L^{\infty}(\Omega)}\leq\varepsilon$ for some $\varepsilon>0$ small enough.
\end{proof}
As a consequence of Lemma \ref{lem:min_0} the following result holds for the energy functional associated to the extension problem corresponding to the moved problem $(\widetilde{P}_\lambda)$, namely,
\begin{equation*}
\widetilde{J}_{\lambda}(w)=\frac{1}{2}\|w\|_{\mathcal{X}_{\Sigma_{\mathcal{D}}}^s(\mathscr{C}_{\Omega})}^2-\int_{\Omega}G_\lambda(w(x,0))dx.
\end{equation*}
\begin{corollary}\label{cor:min_0_ext}
$w=0$ is a local minimum of $\widetilde{J}_\lambda$ in $\mathcal{X}_{\Sigma_{\mathcal{D}}}^s(\mathscr{C}_{\Omega})$.
\end{corollary}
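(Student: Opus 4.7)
The plan is to transfer the local minimum property from $\widetilde{I}_\lambda$ to $\widetilde{J}_\lambda$ by exploiting two standard facts about the Caffarelli--Silvestre extension in the mixed setting: (i) the extension operator $E_s$ is an isometry between $H_{\Sigma_{\mathcal{D}}}^s(\Omega)$ and $\mathcal{X}_{\Sigma_{\mathcal{D}}}^s(\mathscr{C}_{\Omega})$ (equation \eqref{norma2}), and (ii) among all functions in $\mathcal{X}_{\Sigma_{\mathcal{D}}}^s(\mathscr{C}_{\Omega})$ sharing the same trace on $\Omega\times\{0\}$, the $s$-harmonic extension is the unique minimizer of the $\mathcal{X}$-norm. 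This is the same mechanism that underlies the equivalence between local minima of $I_\lambda$ and of $J_\lambda$ recorded earlier (modeled on \cite[Proposition 3.1]{Barrios2012}), now applied to the moved functionals.

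Concretely, I would pick any $w\in\mathcal{X}_{\Sigma_{\mathcal{D}}}^s(\mathscr{C}_{\Omega})$ and set $u:=w(\cdot,0)\in H_{\Sigma_{\mathcal{D}}}^s(\Omega)$. By the minimizing property of $E_s[u]$ together with the isometry \eqref{norma2}, one has
\begin{equation*}
\|w\|_{\mathcal{X}_{\Sigma_{\mathcal{D}}}^s(\mathscr{C}_{\Omega})}^{2}\;\geq\;\|E_s[u]\|_{\mathcal{X}_{\Sigma_{\mathcal{D}}}^s(\mathscr{C}_{\Omega})}^{2}\;=\;\|u\|_{H_{\Sigma_{\mathcal{D}}}^s(\Omega)}^{2}.
\end{equation*}
Since the nonlinear term in $\widetilde{J}_\lambda$ depends on $w$ only through its trace, $\int_\Omega G_\lambda(w(x,0))\,dx=\int_\Omega G_\lambda(u)\,dx$, it follows immediately that
\begin{equation*}
\widetilde{J}_\lambda(w)\;\geq\;\widetilde{I}_\lambda(u),
\end{equation*}
with equality at $w=0$ (and $u=0$), where both functionals vanish.

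Finally, by Lemma \ref{lem:min_0}, there exists $\delta>0$ such that $\widetilde{I}_\lambda(u)\geq 0=\widetilde{I}_\lambda(0)$ whenever $\|u\|_{H_{\Sigma_{\mathcal{D}}}^s(\Omega)}\leq\delta$. Using the inequality above in the reverse direction, $\|u\|_{H_{\Sigma_{\mathcal{D}}}^s(\Omega)}=\|E_s[u]\|_{\mathcal{X}_{\Sigma_{\mathcal{D}}}^s(\mathscr{C}_{\Omega})}\leq\|w\|_{\mathcal{X}_{\Sigma_{\mathcal{D}}}^s(\mathscr{C}_{\Omega})}$, so the same $\delta$ works as a radius in the extension space: if $\|w\|_{\mathcal{X}_{\Sigma_{\mathcal{D}}}^s(\mathscr{C}_{\Omega})}\leq\delta$, then $\|u\|_{H_{\Sigma_{\mathcal{D}}}^s(\Omega)}\leq\delta$ and hence
\begin{equation*}
\widetilde{J}_\lambda(w)\;\geq\;\widetilde{I}_\lambda(u)\;\geq\;0\;=\;\widetilde{J}_\lambda(0),
\end{equation*}
which is the desired local minimality of $w=0$ for $\widetilde{J}_\lambda$. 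There is no real obstacle here; the statement is essentially a consequence of applying the $E_s$-isometry/extension-minimization machinery established in Section \ref{Functionalsetting} to the already established Lemma \ref{lem:min_0}.
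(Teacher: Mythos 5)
Your proof is correct and uses essentially the same mechanism the paper relies on: the extension $E_s$ is an isometry, the $s$-harmonic extension minimizes the $\mathcal{X}$-norm among competitors with the same trace, and the nonlinear term depends only on the trace, so the one relevant direction of the $I_\lambda\leftrightarrow J_\lambda$ local-minimum equivalence (modeled on \cite[Proposition 3.1]{Barrios2012}) transfers Lemma \ref{lem:min_0} to $\widetilde{J}_\lambda$ at $w=E_s[0]=0$.
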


\subsection{The Palais--Smale condition}\hfill\newline

Assuming that $w=0$ is the unique critical point of the energy functional $\widetilde{J}_{\lambda}$, corresponding to the extension problem of the moved problem $(\widetilde{P}_{\lambda})$, we prove that $\widetilde{J}_{\lambda}$ satisfies a local
Palais–Smale condition for energy levels $c$ below the critical level $c_{\mathcal{D-N}}^*=\frac{s}{N}[\widetilde{S}(\Sigma_{\mathcal{D}})]^{\frac{N}{2s}}$. As commented above, the main tool for proving this fact is the extension to the fractional-mixed setting of the concentration-compactness principle by Lions (cf. \cite{Lions1985}) provided by Theorem \ref{concompact}. We will also need some estimates for the action of the fractional Laplacian on the Sobolev extremal functions \eqref{eq:sob_extremal} in order to find paths with energy below $c_{\mathcal{D-N}}^*$.

\begin{lemma}\label{lem:PS}
If $w=0$ is the only critical point of $\widetilde{J}_{\lambda}$ in $\mathcal{X}_{\Sigma_{\mathcal{D}}}^s(\mathscr{C}_{\Omega})$, then $\widetilde{J}_{\lambda}$ satisfies a local Palis--Smale condition below the critical level
\begin{equation*}
c_{\mathcal{D-N}}^*=\frac{s}{N}[\widetilde{S}(\Sigma_{\mathcal{D}})]^{\frac{N}{2s}}.
\end{equation*}
\end{lemma}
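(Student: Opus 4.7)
The plan is to show that every Palais--Smale sequence $\{w_n\}\subset \mathcal{X}_{\Sigma_{\mathcal{D}}}^s(\mathscr{C}_{\Omega})$ for $\widetilde{J}_{\lambda}$ at a level $c<c_{\mathcal{D-N}}^*$ admits a strongly convergent subsequence; under the hypothesis that $w=0$ is the unique critical point of $\widetilde{J}_{\lambda}$, the limit of such a subsequence must be $w=0$. The boundedness of $\{w_n\}$ follows from the standard Ambrosetti--Brezis--Cerami combination: since the pure critical contributions in $G_{\lambda}(t)$ and $\tfrac{1}{2_s^*}g_{\lambda}(x,t)t$ cancel and the surviving remainder is controlled by the concave piece $\lambda|t|^{q+1}$ with $q+1<2$ together with cross terms involving the fixed bounded function $u_0$, one obtains $\|w_n\|_{\mathcal{X}_{\Sigma_{\mathcal{D}}}^s(\mathscr{C}_{\Omega})}\le C$. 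By reflexivity and the compact trace embedding into $L^p(\Omega)$ for $p<2_s^*$, up to a subsequence $w_n\rightharpoonup w$ with $u_n=Tr[w_n]\to u=Tr[w]$ in every subcritical $L^p$ and a.e.; letting $n\to\infty$ in $\widetilde{J}_{\lambda}'(w_n)\to 0$ yields $\widetilde{J}_{\lambda}'(w)=0$, and hence $w\equiv 0$ by hypothesis.

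Next I would invoke the concentration--compactness principle of Theorem \ref{concompact}. After verifying tightness of $\{y^{1-2s}|\nabla w_n|^2\}$ --- which can be obtained from the uniform energy bound together with standard cut-off estimates in the $s$-harmonic extension --- one gets measures $\nu=\sum_{i\in\mathfrak{I}}\nu_i\delta_{x_i}$ and $\mu\ge\sum_{i\in\mathfrak{I}}\mu_i\delta_{x_i}$ with $\mu_i\ge \widetilde{S}(\Sigma_{\mathcal{D}})\,\nu_i^{2/2_s^*}$. For each $i\in\mathfrak{I}$ I would pick a smooth cutoff $\phi_\varepsilon$ concentrated in an $\varepsilon$-ball around $x_i$ and test $\langle \widetilde{J}_{\lambda}'(w_n), w_n\phi_\varepsilon\rangle\to 0$; passing first $n\to\infty$ and then $\varepsilon\to 0^+$, the subcritical contributions to $g_\lambda(x,u_n)u_n$ --- including every cross term generated by expanding $(u_0+u_n)^{2_s^*-1}u_n$ and $\lambda(u_0+u_n)^{q}$ around the fixed $u_0$ --- vanish by strong subcritical $L^p$ convergence, leaving the identity $\mu_i=\nu_i$. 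Combined with $\mu_i\ge \widetilde{S}(\Sigma_{\mathcal{D}})\nu_i^{2/2_s^*}$, this forces the dichotomy $\nu_i=0$ or $\nu_i\ge[\widetilde{S}(\Sigma_{\mathcal{D}})]^{N/2s}$.

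To close the argument I would compute
\begin{equation*}
c=\lim_{n\to\infty}\Big(\widetilde{J}_{\lambda}(w_n)-\tfrac{1}{2_s^*}\langle \widetilde{J}_{\lambda}'(w_n),w_n\rangle\Big)=\tfrac{s}{N}\lim_{n\to\infty}\|w_n\|_{\mathcal{X}_{\Sigma_{\mathcal{D}}}^s(\mathscr{C}_{\Omega})}^{2}\ge \tfrac{s}{N}\mu_i,
\end{equation*}
the residual subcritical integrals vanishing in the limit since $u_n\to 0$ in every subcritical $L^p$. If some $\nu_i>0$ then necessarily $c\ge \tfrac{s}{N}[\widetilde{S}(\Sigma_{\mathcal{D}})]^{N/2s}=c_{\mathcal{D-N}}^*$, contradicting $c<c_{\mathcal{D-N}}^*$; hence $\mathfrak{I}=\emptyset$, so $u_n\to 0$ in $L^{2_s^*}(\Omega)$, and feeding this into $\langle \widetilde{J}_{\lambda}'(w_n),w_n\rangle\to 0$ finally gives $\|w_n\|_{\mathcal{X}_{\Sigma_{\mathcal{D}}}^s(\mathscr{C}_{\Omega})}\to 0$. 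The main technical obstacle I anticipate lies in the concentration step: because $g_\lambda$ is not a pure power, one must verify carefully that the critical cross term $(u_0+u_n)^{2_s^*-1}u_n-u_0^{2_s^*-1}u_n-|u_n|^{2_s^*}$ and the concave cross terms produced by the Taylor-type expansion around $u_0$ contribute nothing to the singular parts of $\mu$ and $\nu$, so that the identity $\mu_i=\nu_i$ truly captures only the pure critical mass and Theorem \ref{concompact} can be applied sharply.
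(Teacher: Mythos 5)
Your proposal is correct and arrives at the same conclusion via concentration--compactness, but it runs the argument along a genuinely different axis than the paper. You apply Theorem \ref{concompact} directly to the Palais--Smale sequence $\{w_n\}$ for $\widetilde{J}_{\lambda}$ and use the combination $\widetilde{J}_{\lambda}(w_n)-\tfrac{1}{2_s^*}\langle\widetilde{J}_{\lambda}'(w_n),w_n\rangle$, whereas the paper first shifts to $z_n=w_n+w_0$, observes $\widetilde{J}_{\lambda}(w_n)+J_{\lambda}(w_0)\geq J_{\lambda}(z_n)$ and $J_{\lambda}'(z_n)\to 0$, then applies Theorem \ref{concompact} to $\{z_n\}$ for the \emph{original} functional $J_{\lambda}$, and finally uses the combination $J_{\lambda}(z_n)-\tfrac12\langle J_{\lambda}'(z_n),z_n\rangle$. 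The paper's shift purchases a clean pure-power nonlinearity $F_{\lambda}$: the identity $\mu_{i_0}=\nu_{i_0}$ and the estimate $c+J_{\lambda}(w_0)\geq J_{\lambda}(w_0)+\tfrac{s}{N}\nu_{i_0}$ then drop out with no cross-term bookkeeping. Your direct route avoids introducing $z_n$, and since $w_n\rightharpoonup 0$ the limiting measure $\nu$ is purely atomic, which is tidy; the price is exactly what you flag at the end, namely verifying that all the cross terms in $G_{\lambda}$ and $g_{\lambda}$ generated by expanding around $u_0$ are subcritical and contribute nothing to the singular parts of $\mu,\nu$ nor to $\lim\bigl(\widetilde{J}_{\lambda}-\tfrac{1}{2_s^*}\langle\widetilde{J}_{\lambda}',\cdot\rangle\bigr)$. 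This verification does go through: the critical-order remainder $\tfrac{1}{2_s^*}g_{\lambda}(t)t-G_{\lambda}(t)$ is $O(t^{2_s^*-1}+t^{q+1})$ at infinity and $O(t^2)$ at zero, hence its integral against $u_n^+$ vanishes by subcritical $L^p$-convergence, and in the localized test $\langle\widetilde{J}_{\lambda}'(w_n),w_n\phi_{\varepsilon}\rangle$ only the pure $(u_n^+)^{2_s^*}$ piece survives. One small point you leave implicit and should record: since $g_{\lambda}(\cdot,t)=0$ for $t\le 0$, testing $\widetilde{J}_{\lambda}'(w_n)$ against $w_n^-$ shows $\|w_n^-\|_{\mathcal{X}_{\Sigma_{\mathcal{D}}}^s(\mathscr{C}_{\Omega})}\to 0$, so that $|u_n|^{2_s^*}$ and $(u_n^+)^{2_s^*}$ share the same limit measure $\nu$; this is needed to match the critical mass in your test-function identity with the $\nu$ supplied by Theorem \ref{concompact}.
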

\begin{proof}
Let $\{w_n\}$ be a PS sequence for $\widetilde{J}_{\lambda}$ verifying
\begin{equation}\label{lem:PS_eq0}
\widetilde{J}_{\lambda}(w_n)\to c< c_{\mathcal{D-N}}^*\quad\text{and}\quad \widetilde{J}_{\lambda}'(w_n)\to0.
\end{equation}
Then, it is clear that the sequence $\{w_n\}$ is uniformly bounded in $\mathcal{X}_{\Sigma_{\mathcal{D}}}^s(\mathscr{C}_{\Omega})$, say $\|w_n\|_{\mathcal{X}_{\Sigma_{\mathcal{D}}}^s(\mathscr{C}_{\Omega})}^2\leq M$ and, since by hypothesis $w=0$ is the unique critical point of $\widetilde{J}_{\lambda}$, it follows that 
\begin{equation*}
\begin{split}
w_n&\rightharpoonup 0\quad\text{weakly in }\mathcal{X}_{\Sigma_{\mathcal{D}}}^s(\mathscr{C}_{\Omega}),\\
w_n(x,0)&\to 0\quad\text{strongly in }L^r(\Omega),\ 1\leq r<2_s^*,\\
w_n(x,0)&\to 0\quad\text{a.e. in }\Omega.
\end{split}
\end{equation*}
Also, since $w_0=E_s[u_0]$ is a critical point of $J_{\lambda}$, we get 
\begin{equation}\label{lem:PS_eq1}
\widetilde{J}_{\lambda}(w_n)+J_\lambda(w_0)\geq J_\lambda(z_n)
\end{equation}
where $z_n=w_n+w_0$ and, then,
\begin{equation}\label{lem:PS_eq2}
J_\lambda(z_n)\to c+J_{\lambda}(w_0)\quad\text{and}\quad J'_{\lambda}(z_n)\to0.
\end{equation}
From \eqref{lem:PS_eq1} and \eqref{lem:PS_eq2} we get that the sequence $\{z_m\}$ is uniformly bounded in $\mathcal{X}_{\Sigma_{\mathcal{D}}}^s(\mathscr{C}_{\Omega})$, say $\|z_n\|_{\mathcal{X}_{\Sigma_{\mathcal{D}}}^s(\mathscr{C}_{\Omega})}^2\leq M_2$. This, together with the fact that $w=0$ is the unique critical point of $\widetilde{J}_{\lambda}$, up to a subsequence, we get 
\begin{equation}\label{lem:PS_eq1b}
\begin{split}
z_n&\rightharpoonup w_0\quad\text{weakly in }\mathcal{X}_{\Sigma_{\mathcal{D}}}^s(\mathscr{C}_{\Omega}),\\
z_n(x,0)&\to w_0(x,0)\quad\text{strongly in }L^r(\Omega),\ 1\leq r<2_s^*,\\
z_n(x,0)&\to w_0(x,0)\quad\text{a.e. in }\Omega.
\end{split}
\end{equation}
In order to apply the concentration-compactness result provided by Theorem \ref{concompact} we claim the following:\newline
\textbf{Claim:} The sequence $\{y^{1-2s}|\nabla z_n|^2\}_{n\in\mathbb{N}}$ is tight, i.e., for any $\eta>0$ there exists $\rho>0$ such that
\begin{equation*}
\int_{\{y>\rho\}}\int_{\Omega}y^{1-2s}|\nabla z_n|^2dxdy\leq\eta,\ \forall n\in\mathbb{N}.
\end{equation*}
The proof of the claim follows verbatim that of \cite[Lemma 3.6]{Barrios2012} (see also the proof of \cite[Theorem 4.5]{Colorado2019}) so we omit the details.

We can then apply Theorem \ref{concompact}. Hence, up to a subsequence, there exists an at most countable index set $\mathfrak{I}$, a sequence of points $\{x_i\}_{i\in\mathfrak{I}}\subset\overline{\Omega}$ and nonnegative real numbers $\mu_i$ and $\nu_i$ such that
\begin{equation}\label{lem:PS_eq3}
\kappa_s y^{1-2s}|\nabla z_n|^2\to \mu\geq\kappa_sy^{1-2s}|\nabla w_0|^2+\sum\limits_{i\in \mathfrak{I}}\mu_{i}\delta_{x_{i}},
\end{equation}
 and
 \begin{equation}\label{lem:PS_eq4}
 |z_n(x,0)|^{2_s^*}\to \nu=|w_0(x,0)|^{2_s^*}+\sum\limits_{i\in \mathfrak{I}}\nu_{i}\delta_{x_{i}},
 \end{equation}
 in the sense of measures and satisfying the relation 
 \begin{equation}\label{ineq:compac}
 \mu_i\geq \widetilde{S}(\Sigma_{\mathcal{D}})\nu_i^{\frac{2}{2_s^*}},\quad\text{for }i\in\mathfrak{I}.
 \end{equation}
Next, we fix $i_0\in \mathfrak{I}$ and we let $\phi\in\mathcal{C}_0^{\infty}(\mathbb{R}_+^{N+1})$ be a non-increasing smooth cut-off function verifying $\phi=1$ in $B_1^+(x_{i_0})$ and $\phi=0$ in $B_2^+(x_{i_0})^c$, with $B_r^+(x_{i_0})\subset\mathbb{R}^{N}\times\{y\geq0\}$ the $(N+1)$-dimensional semi-ball of radius $r>0$ centered at $x_{i_0}$. Let now $\phi_{\varepsilon}(x,y)=\phi(x/\varepsilon,y/\varepsilon)$, clearly $|\nabla\phi_{\varepsilon}|\leq\frac{C}{\varepsilon}$ and let us denote by $\Gamma_{2\varepsilon}=B_{2\varepsilon}^+(x_{i_0})\cap\{y=0\}$. Then, taking the dual product in \eqref{lem:PS_eq2} with $\phi_{\varepsilon}z_n$, we have
\begin{equation*}
\begin{split}
\lim_{n\to\infty}&\kappa_s\int_{\mathscr{C}_{\Omega}}y^{1-2s}\langle\nabla z_n,\nabla\phi_{\varepsilon}\rangle z_ndxdy\\
&=\lim_{n\to\infty} \left(\lambda\int_{\Gamma_{2\varepsilon}}|z_n|^{q+1}\phi_{\varepsilon}dx+\int_{\Gamma_{2\varepsilon}}|z_n|^{2_s^*}\phi_{\varepsilon}dx-\kappa_s\int_{B_{2\varepsilon}^+(x_{i_0})}y^{1-2s}|\nabla z_n|^2\phi_{\varepsilon}dxdy\right)
\end{split}
\end{equation*}
Then, thanks to \eqref{lem:PS_eq1b}, \eqref{lem:PS_eq3} and \eqref{lem:PS_eq4}, we find,
\begin{equation}\label{eq:tozero}
\begin{split}
\lim_{n\to\infty}&\kappa_s\int_{\mathscr{C}_{\Omega}}y^{1-2s}\langle\nabla z_n,\nabla\phi_{\varepsilon}\rangle z_ndxdy\\
&=\lambda\int_{\Gamma_{2\varepsilon}}|w_0|^{q+1}\phi_{\varepsilon}dx+\int_{\Gamma_{2\varepsilon}}\phi_{\varepsilon}d\nu-\kappa_s\int_{B_{2\varepsilon}^+(x_{i_0})}y^{1-2s}\phi_{\varepsilon}d\mu.
\end{split}
\end{equation}
Assume for the moment that the left hand side of \eqref{eq:tozero} vanishes as $\varepsilon\to0$. Thus, 
\begin{equation}\label{compacidad}
0=\lim_{\varepsilon\to0}\left(\lambda\int_{\Gamma_{2\varepsilon}}|w_0|^{q+1}\phi_{\varepsilon}dx+\int_{\Gamma_{2\varepsilon}}\phi_{\varepsilon}d\nu-\kappa_s\int_{B_{2\varepsilon}^+(x_{i_0})}y^{1-2s}\phi_{\varepsilon}d\mu\right)=\nu_{i_0}-\mu_{i_0}.
\end{equation}
We have then two options, either the compactness of the PS sequence or concentration around the point $x_{i_0}$. In other words, either $\nu_{i_0}=0$, so that $\mu_{i_0}=0$ or, by \eqref{compacidad} and \eqref{ineq:compac}, $$\nu_{i_0}\geq [\widetilde{S}(\Sigma_{\mathcal{D}})]^{\frac{N}{2s}}.$$ In case of having concentration, we find,
\begin{equation*}
\begin{split}
c+J_{\lambda}(w_0)=&\lim_{n\to\infty}J_{\lambda}(z_n)-\frac{1}{2}\langle J_{\lambda}'(z_n),z_n\rangle\\
\geq&\lambda\left(\frac{1}{2}-\frac{1}{q+1}\right)\int_{\Omega}|w_0|^{q+1}dx+\frac{s}{N}\int_{\Omega}|w_0|^{2_s^*}dx+\frac{s}{N}\nu_{i_0}\\
\geq&J_{\lambda}(w_0)+\frac{s}{N}[\widetilde{S}(\Sigma_{\mathcal{D}})]^{\frac{N}{2s}}=J_{\lambda}(w_0)+c_{\mathcal{D-N}}^*,
\end{split}
\end{equation*}
in contradiction with the hypotheses $c<c_{\mathcal{D-N}}^*$. Since $i_0$ was chosen arbitrarily, $\nu_i=0$ for all $i\in\mathfrak{I}$. As a consequence $u_n\to u_0$ in $L^{2_s^*}(\Omega)$. Since convergence of $u_n$ in $L^{2_s^*}(\Omega)$ implies convergence of $f_{\lambda}(u_n)$ in $L^{\frac{2N}{N+2s}}(\Omega)$ using the continuity of the inverse operator $(-\Delta)^{-s}$ we get the strong convergence of $u_n$ in $H_{\Sigma_{\mathcal{D}}}^s(\Omega)$.

It only remains to prove that the left hand side of \eqref{eq:tozero} vanishes as $\varepsilon\to0$. The PS sequence $\{z_m\}$ is uniformly bounded in $\mathcal{X}_{\Sigma_{\mathcal{D}}}^s(\mathscr{C}_{\Omega})$ and, up to a subsequence, $z_n\rightharpoonup w_0\in \mathcal{X}_{\Sigma_{\mathcal{D}}}^s(\mathscr{C}_{\Omega})$ and $z_n\rightarrow w_0$ a.e. in $\mathscr{C}_{\Omega}$. Moreover, for $r<2^*=\frac{2(N+1)}{N-1}$ we have the compact inclusion $\mathcal{X}_{\Sigma_{\mathcal{D}}}^s(\mathscr{C}_{\Omega})\hookrightarrow L^{r}(\mathscr{C}_{\Omega},y^{1-2\beta}dxdy)$. Then, applying H\"older's inequality with $p=\frac{N+1}{N-1}$ and $q=\frac{N+1}{2}$, we find,
\begin{equation*}
\begin{split}
\int_{B_{2\varepsilon}^+(x_{i_0})}  &y^{1-2s}|\nabla\phi_{\varepsilon}|^2|z_n|^2dxdy\\
\leq& \left(\int_{B_{2\varepsilon}^+(x_{i_0})}y^{1-2s}|\nabla\phi_{\varepsilon}|^{N+1}dxdy\right)^{\frac{2}{N+1}}\left(\int_{B_{2\varepsilon}^+(x_{i_0})} y^{1-2s}|z_n|^{2\frac{N+1}{N-1}}dxdy\right)^{\frac{N-1}{(N+1)}}\\
\leq&\frac{1}{\varepsilon^2}\left(\int_{B_{2\varepsilon}(x_{i_0})}\int_0^\varepsilon y^{1-2s}dxdy\right)^{\frac{2}{N+1}}\left(\int_{B_{2\varepsilon}^+(x_{i_0})} y^{1-2s}|z_n|^{2\frac{N+1}{N-1}}dxdy\right)^{\frac{N-1}{(N+1)}}\\
\leq& c_0\varepsilon^{\frac{2(1-2s)}{N+1}}\left(\int_{B_{2\varepsilon}^+(x_{i_0})}y^{1-2s}|z_n|^{2\frac{N+1}{N-1}}dxdy\right)^{\frac{N-1}{(N+1)}}\\
\leq& c_0 \varepsilon^{\frac{2(1-2s)}{N+1}}\varepsilon^{\frac{(2+N-2s)(N-1)}{(N+1)}}\left(\int_{B_{2}^+(x_{i_0})}y^{1-2s}|z_n(\varepsilon x,\varepsilon y)|^{2\frac{N+1}{N-1}}dxdy\right)^{\frac{N-1}{(N+1)}}\\
\leq& c_1 \varepsilon^{N-2s},
\end{split}
\end{equation*}
for appropriate positive constants $c_0$ and $c_1$. 
Thus, we conclude,
\begin{equation*}
\begin{split}
0\leq&\lim_{n\to\infty}\left|\kappa_s\int_{\mathscr{C}_{\Omega}}y^{1-2s}\langle\nabla z_n,\nabla\phi_{\varepsilon} \rangle z_ndxdy\right|\\
\leq&\kappa_s\lim_{n\to\infty}\left(\int_{\mathscr{C}_{\Omega}}y^{1-2s}|\nabla z_n|^2dxdy\right)^{1/2}\left(\int_{B_{2\varepsilon}^+(x_{k_0})}y^{1-2s}|\nabla\phi_{\varepsilon}|^2|z_n|^2dxdy\right)^{1/2}\\
\leq&C \varepsilon^{\frac{N-2s}{2}}\to0,
\end{split}
\end{equation*}
as $\varepsilon\to 0$ and the proof of the Lemma \ref{lem:PS} is complete. 
\end{proof}

In Lemma \ref{lem:PS} we have proved that, if $w=0$ is the only critical point of the functional $\widetilde{J}_{\lambda}$, then $\widetilde{J}_{\lambda}$ verifies the Palais-Smale condition at any level $c<c^*_{\mathcal{D-N}}$. Next, we have to show that we can obtain a local PS$_c$ sequence for the energy functional $\widetilde{J}_{\lambda}$ with energy level $c<c^*_{\mathcal{D-N}}$. To do that we will use the family of minimizers $u_{\varepsilon}$ of the fractional Sobolev inequality \eqref{sobolev} given by \eqref{eq:sob_extremal} and its $s$-harmonic extension $w_{\varepsilon}=E_s[u_{\varepsilon}]$.

Consider a smooth non-increasing cut-off function $\phi_0(t)\in\mathcal{C}^{\infty}(\mathbb{R}_+)$, satisfying $\phi_0(t)=1$ for $0\leq t\leq\frac{1}{2}$ and $\phi_0(t)=0$ for $t\geq1$, and $|\phi_0'(t)|\le C$ for any $t\ge 0$. Assume, without loss of generality, that $0\in\Omega$, and define, for some $\rho>0$
small enough such that $B_{\rho}^+((0,0))\subseteq{\mathscr{C}}_{\Omega}$, the function $\phi(x,y)=\phi_{\rho}(x,y)=\phi_0(\frac{r_{xy}}{\rho})$ with $r_{xy}=|(x,y)|=(|x|^2+y^2)^{\frac{1}{2}}$. Then, we have the following.
\begin{lemma}\cite[Lemma 3.3]{Colorado2019}\label{lemma_est} The family $\{\phi_{\rho} w_{\varepsilon}\}$ and its trace on $\{y=0\}$, namely $\{\phi_{\rho} u_{\varepsilon}\}$, satisfy
\begin{equation*}
\|\phi_\rho w_{\varepsilon}\|_{\mathcal{X}_{\Sigma_{\mathcal{D}}}^s(\mathscr{C}_{\Omega})}^2=\|w_{\varepsilon}\|_{\mathcal{X}_{\Sigma_{\mathcal{D}}}^s(\mathscr{C}_{\Omega})}^2+ O\left(\left(\frac{\varepsilon}{\rho}\right)^{N-2s}\right)
\end{equation*}
and
\begin{equation*}
\int_{\Omega}|\phi_\rho
u_{\varepsilon}|^{2_s^*}dx=\|u_{\varepsilon}\|_{L^{2_s^*}(\mathbb{R}^N)}^{2_s^*}+O\left(\left(\frac{\varepsilon}{\rho}\right)^N\right).
\end{equation*}
for $\varepsilon>0$ small enough.
\end{lemma}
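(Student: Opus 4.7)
My plan is to establish both estimates by exploiting the scaling structure of the Sobolev extremal functions and comparing the truncated integrals with integrals of the unscaled profiles over exterior domains. Writing
\[
u_\varepsilon(x)=\varepsilon^{-\frac{N-2s}{2}}u_1(x/\varepsilon)\quad\text{and}\quad w_\varepsilon(x,y)=\varepsilon^{-\frac{N-2s}{2}}w_1(x/\varepsilon,y/\varepsilon),
\]
with $u_1(x)=(1+|x|^2)^{-(N-2s)/2}$ and $w_1=E_s[u_1]$, a change of variable in each integral to come will convert the small parameter $\varepsilon/\rho$ into the inverse size of the region of integration.

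For the trace estimate, I would write
\[
\int_{\Omega}|\phi_\rho u_\varepsilon|^{2_s^*}dx-\|u_\varepsilon\|_{L^{2_s^*}(\mathbb{R}^N)}^{2_s^*}=-\int_{\mathbb{R}^N}\bigl(1-\phi_\rho^{2_s^*}\bigr)|u_\varepsilon|^{2_s^*}dx.
\]
Since $1-\phi_\rho^{2_s^*}$ is supported in $\{|x|\ge \rho/2\}$ and bounded by $1$, the right-hand side is controlled by $\int_{|x|\ge\rho/2}|u_\varepsilon|^{2_s^*}dx$. Substituting $x=\varepsilon y$ this becomes $\int_{|y|\ge\rho/(2\varepsilon)}|u_1|^{2_s^*}dy$, which, thanks to $|u_1(y)|^{2_s^*}\sim|y|^{-2N}$ at infinity, is of order $(\varepsilon/\rho)^N$. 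This proves the second identity.

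For the $\mathcal{X}$-norm estimate I would expand
\[
|\nabla(\phi_\rho w_\varepsilon)|^2=\phi_\rho^2|\nabla w_\varepsilon|^2+2\phi_\rho w_\varepsilon\,\nabla\phi_\rho\!\cdot\!\nabla w_\varepsilon+w_\varepsilon^2|\nabla\phi_\rho|^2,
\]
so that
\[
\|\phi_\rho w_\varepsilon\|_{\mathcal{X}}^2-\|w_\varepsilon\|_{\mathcal{X}}^2=\kappa_s\!\int_{\mathbb{R}^{N+1}_+}\!\!y^{1-2s}\Bigl[(\phi_\rho^2-1)|\nabla w_\varepsilon|^2+2\phi_\rho w_\varepsilon\nabla\phi_\rho\!\cdot\!\nabla w_\varepsilon+w_\varepsilon^2|\nabla\phi_\rho|^2\Bigr]dxdy.
\]
The first term is supported in $\{r_{xy}\ge\rho/2\}$, and after rescaling reduces to $\int_{\{r\ge\rho/(2\varepsilon)\}}y^{1-2s}|\nabla w_1|^2\,dxdy$. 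Using the well-known decay $w_1(x,y)\lesssim r_{xy}^{-(N-2s)}$ and $|\nabla w_1(x,y)|\lesssim r_{xy}^{-(N-2s+1)}$ at infinity (which is the one ingredient one has to invoke; it follows from the Poisson-type representation of the $s$-harmonic extension), a direct calculation in polar coordinates yields the $(\varepsilon/\rho)^{N-2s}$ bound. The second and third terms are supported in the annulus $\rho/2<r_{xy}<\rho$ where $|\nabla\phi_\rho|\le C/\rho$; invoking Cauchy--Schwarz reduces them to the previously estimated piece times a factor controlled by the analogous estimate for $\int y^{1-2s}w_\varepsilon^2$ on the same annulus, which rescales to a quantity of strictly smaller order than $(\varepsilon/\rho)^{N-2s}$ precisely because $s>\tfrac12$.

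The main obstacle, and the only non-routine input, is the correct decay rate of $w_1$ and $\nabla w_1$ at infinity: everything else is bookkeeping with the scaling relations. Once these bounds are in hand, all three pieces are standard and the two asymptotic identities follow.
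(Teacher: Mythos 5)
Your proof is correct in substance and uses the expected ingredients: the dilation structure $u_\varepsilon=\varepsilon^{-(N-2s)/2}u_1(\cdot/\varepsilon)$, the algebraic decay $|u_1|^{2_s^*}\sim|x|^{-2N}$ for the trace estimate, and the pointwise decay bounds $w_1(x,y)\lesssim r_{xy}^{-(N-2s)}$ together with $|\nabla w_1(x,y)|\lesssim r_{xy}^{-(N-2s+1)}$ for the energy estimate. The latter gradient bound is precisely \cite[Lemma 3.1]{Barrios2012}, namely $|\nabla w_{1,s}|\leq C\,w_{1,s-\frac12}$, and is the only non-elementary input, exactly as you observed. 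The present paper does not reproduce a proof of this lemma (it is imported from \cite[Lemma 3.3]{Colorado2019}), but its proof of the boundary analogue, Lemma~\ref{lemma_est_boundary}, is built on the same two estimates and even refers part of its argument back to the interior case, so your route is the intended one.

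Two small corrections to the bookkeeping. First, your claim that the cross term and the $w_\varepsilon^2|\nabla\phi_\rho|^2$ term are of \emph{strictly smaller} order than $(\varepsilon/\rho)^{N-2s}$ is not accurate: after rescaling, $\int_{\{\rho/2<r_{xy}<\rho\}}y^{1-2s}w_\varepsilon^2\,dxdy\sim \rho^{2}(\varepsilon/\rho)^{N-2s}$, and the factor $|\nabla\phi_\rho|^2\sim\rho^{-2}$ exactly cancels the $\rho^2$, so this term (and then, via Cauchy--Schwarz, the cross term) is of the \emph{same} order $(\varepsilon/\rho)^{N-2s}$ as the leading piece. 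All three contributions are $O((\varepsilon/\rho)^{N-2s})$, which is all the lemma asserts, so the conclusion is unaffected. Second, the hypothesis $s>\tfrac12$ does not serve to make any term smaller; it is needed to invoke the gradient bound $|\nabla w_{1,s}|\leq C\,w_{1,s-1/2}$ in the first place, since $w_{1,\sigma}$ only makes sense for $\sigma>0$.
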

We consider now a cut-off function centered at $x_0\in\Sigma_{\mathcal{N}}$, namely, $\psi(x,y)=\psi_{\rho}(x,y)=\phi_0(\frac{\overline{r}_{xy}}{\rho})$ with $\overline{r}_{xy}=|(x-x_0,y)|=(|x-x_0|^2+y^2)^{\frac{1}{2}}$ and $\phi_0(t)$ as the cut-off function of Lemma \ref{lemma_est}. Then, in the spirit of \cite[Lemma 3.2]{Grossi1990} the following holds.

\begin{lemma}\label{lemma_est_boundary}
The family $\{\psi_{\rho} w_{\varepsilon}\}$ and its trace on $\{y=0\}$, namely $\{\psi_{\rho} u_{\varepsilon}\}$, satisfy
\begin{equation}\label{est1}
\|\psi_\rho w_{\varepsilon}\|_{\mathcal{X}_{\Sigma_{\mathcal{D}}}^s(\mathscr{C}_{\Omega})}^2=\frac{1}{2}\|w_{\varepsilon}\|_{\mathcal{X}_{\Sigma_{\mathcal{D}}}^s(\mathscr{C}_{\Omega})}^2+ O\left(\left(\frac{\varepsilon}{\rho}\right)^{N-2s}\right).
\end{equation}
and
\begin{equation}\label{est1b}
\|\psi_\rho u_{\varepsilon}\|_{L^{2_s^*}(\Omega)}^{2_s^*}=\frac{1}{2}\|u_{\varepsilon}\|_{L^{2_s^*}(\mathbb{R}^N)}^{2_s^*}+O\left(\left(\frac{\varepsilon}{\rho}\right)^N\right).
\end{equation}
for $\varepsilon>0$ small enough.
\end{lemma}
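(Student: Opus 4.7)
The plan is to adapt the argument of Lemma \ref{lemma_est} to a cutoff centered at a boundary point. The factor $\frac{1}{2}$ in \eqref{est1}--\eqref{est1b} reflects two geometric facts: \emph{(i)} near $x_0\in\Sigma_{\mathcal{N}}\subset\partial\Omega$ the domain $\Omega$ occupies (to leading order) only half of a small ball around $x_0$, and \emph{(ii)} both $u_{\varepsilon}(\cdot - x_0)$ and $w_{\varepsilon}(\cdot-x_0,\cdot)$ are radially symmetric in $x$ about $x_0$, hence invariant under reflection across the tangent hyperplane $\partial H_{x_0}$ to $\partial\Omega$ at $x_0$. The error terms come from the same two sources as in Lemma \ref{lemma_est}, namely the polynomial decay of the extremal outside $B_{\rho}(x_0)$ and the gradient of the cutoff $\psi_\rho$, plus a new contribution from the curvature of $\partial\Omega$.

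First, since $\Sigma_{\mathcal{N}}$ is relatively open in $\partial\Omega$, for $\rho>0$ sufficiently small one has $\overline{B_{\rho}(x_0)}\cap\Sigma_{\mathcal{D}}=\emptyset$, so $\psi_\rho w_\varepsilon\in\mathcal{X}_{\Sigma_{\mathcal{D}}}^s(\mathscr{C}_{\Omega})$. Let $H=H_{x_0}\subset\mathbb{R}^N$ denote the tangent half-space to $\Omega$ at $x_0$. I would split
\begin{equation*}
\|\psi_\rho u_\varepsilon\|_{L^{2_s^*}(\Omega)}^{2_s^*}=\int_{H}|\psi_\rho u_\varepsilon|^{2_s^*}\,dx+\mathcal{E}_1,\qquad
\|\psi_\rho w_\varepsilon\|_{\mathcal{X}_{\Sigma_{\mathcal{D}}}^s(\mathscr{C}_{\Omega})}^2=\kappa_s\!\!\int_{H\times(0,\infty)}\!\!y^{1-2s}|\nabla(\psi_\rho w_\varepsilon)|^2\,dxdy+\mathcal{E}_2,
\end{equation*}
where $\mathcal{E}_1,\mathcal{E}_2$ collect the contributions of the symmetric difference $\Omega\triangle H$ inside $B_\rho(x_0)$. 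Because $\partial\Omega$ is smooth, this symmetric difference lies in a tubular neighborhood of $\partial H$ whose normal thickness at horizontal distance $r$ from $x_0$ is bounded by $Cr^2$; using the explicit form \eqref{eq:sob_extremal} and the corresponding pointwise decay estimates for $w_\varepsilon$ and $\nabla w_\varepsilon$, these corrections are absorbed into the prescribed error orders. Then the reflection symmetry of $u_\varepsilon(\cdot-x_0)$ and $w_\varepsilon(\cdot-x_0,\cdot)$ across $\partial H$ yields
\begin{equation*}
\int_H|\psi_\rho u_\varepsilon|^{2_s^*}dx=\tfrac12\!\int_{\mathbb{R}^N}\!|\psi_\rho u_\varepsilon|^{2_s^*}dx,\qquad
\int_{H\times(0,\infty)}\!\!y^{1-2s}|\nabla(\psi_\rho w_\varepsilon)|^2\,dxdy=\tfrac12\!\int_{\mathbb{R}_{+}^{N+1}}\!\!y^{1-2s}|\nabla(\psi_\rho w_\varepsilon)|^2\,dxdy.
\end{equation*}

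To conclude, I would apply verbatim the computation behind Lemma \ref{lemma_est} but with the ambient space $\mathscr{C}_{\Omega}$ replaced by $\mathbb{R}_{+}^{N+1}$ and $\Omega$ replaced by $\mathbb{R}^N$. Since $w_\varepsilon$ is an extremal of the Sobolev inequality in the whole space, the resulting whole-space integrals equal $\|w_\varepsilon\|^2$ and $\|u_\varepsilon\|_{L^{2_s^*}(\mathbb{R}^N)}^{2_s^*}$ respectively, modulo the standard tail terms $O((\varepsilon/\rho)^{N-2s})$ and $O((\varepsilon/\rho)^N)$ produced by $\nabla\psi_\rho$ and by the decay of $w_\varepsilon$ outside $B_{\rho}^{+}((x_0,0))$. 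Combining these three steps yields \eqref{est1} and \eqref{est1b}.

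The main technical obstacle is the curvature estimate controlling $\mathcal{E}_1$ and $\mathcal{E}_2$: one must verify that integrating the concentrating functions $|u_\varepsilon|^{2_s^*}$ and $y^{1-2s}|\nabla w_\varepsilon|^2$ over the thin $O(r^2)$-band around $\partial H$ produces contributions compatible with the stated error bounds. I would handle this by splitting $B_{\rho}(x_0)$ into the inner scale $\{|x-x_0|\lesssim\varepsilon\}$, where $u_\varepsilon$ is comparable to $\varepsilon^{-(N-2s)/2}$ and the band has volume $O(\varepsilon^{N+1})$, and the outer annular region $\{\varepsilon\lesssim|x-x_0|\leq\rho\}$, where the polynomial decay $u_\varepsilon(x)\sim\varepsilon^{(N-2s)/2}|x-x_0|^{-(N-2s)}$ makes the integrand small enough to be absorbed in the claimed error order.
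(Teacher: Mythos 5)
Your proposal is correct and takes essentially the same approach as the paper: both decompose around the boundary point $x_0\in\Sigma_{\mathcal{N}}$ where $\Omega$ occupies half of a small ball, exploit the radial (hence reflection) symmetry of the extremal $w_\varepsilon$ to extract the factor $\frac12$, and reuse the gradient/tail estimates underlying Lemma \ref{lemma_est} to absorb the cutoff and decay errors into $O((\varepsilon/\rho)^{N-2s})$ and $O((\varepsilon/\rho)^N)$. The only difference is that you spell out the smooth-boundary curvature correction (the $O(r^2)$-thick band and the inner/outer scale split) explicitly, whereas the paper compresses this into the remark ``since $\Omega$ is a smooth domain, by the above estimates we find.''
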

\begin{proof}
Take $X_0=(x_0,0)$ and denote by $\Omega_{\rho}$ the set $\Omega\cap B_{\rho}(x_0)$ and by $\mathscr{C}_\rho$ the set $\mathscr{C}_{\Omega}\cap B_{\rho}^+(X_0)$. Let us estimate the norm of the truncated function centered at $x_0\in\Sigma_{\mathcal{N}}$,
\begin{equation}\label{st_lem}
\begin{split}
\|\psi_\rho w_{\varepsilon}\|_{\mathcal{X}_{\Sigma_{\mathcal{D}}}^s(\mathscr{C}_{\Omega})}^2&=\kappa_s\int_{\mathscr{C}_{\Omega}}y^{1-2s}\psi_{\rho}^2|\nabla w_{\varepsilon}|^2dxdy\\
&+\kappa_s\int_{\mathscr{C}_{\Omega}}y^{1-2s}|w_{\varepsilon}\nabla\psi_{\rho}|^2dxdy+2\kappa_s\int_{\mathscr{C}_{\Omega}}y^{1-2s}\langle w_{\varepsilon}\nabla\psi_{\rho},\psi_{\rho}\nabla w_{\varepsilon} \rangle dxdy. 
\end{split}
\end{equation}
Following verbatim the proof of Lemma \ref{lemma_est}, the next estimate holds,
\begin{equation*}
\kappa_s\int_{\mathscr{C}_{\Omega}}y^{1-2s}|w_{\varepsilon}\nabla\psi_{\rho}|^2dxdy+\kappa_s \int_{\mathscr{C}_{\Omega}}y^{1-2s}\langle w_{\varepsilon}\nabla\psi_{\rho},\psi_{\rho}\nabla w_{\varepsilon} \rangle dxdy=O\left(\left(\frac{\varepsilon}{\rho}\right)^{N-2s}\right).
\end{equation*}
To estimate the remaining term in the right-hand side of \eqref{st_lem}, we use that, by \cite[Lemma 3.1]{Barrios2012}, 
\begin{equation*}
|\nabla w_{1,s}(x,y)|\leq C w_{1,s-\frac12}(x,y),\ \frac{1}{2}<s<1,\ (x,y)\in\mathbb{R}_+^{N+1},
\end{equation*}
and, for $(x,y)\in B_{\frac{\rho}{\varepsilon}}^+(X_0)$, we have $\displaystyle w_1(x,y)=O\left(\left(\frac{\varepsilon}{\rho}\right)^{N-2s}\right)$, so that
\begin{equation*}
\int_{\{r_{xy}\leq \rho\}}y^{1-2s}|\nabla w_{\varepsilon}|^2dxdy\leq C\int\limits_{\{r_{xy}\leq \frac{\rho}{\varepsilon}\}}y^{1-2s}\left(\frac{\varepsilon}{\rho}\right)^{2(N-2(s-1/2))}dxdy =O\left(\left(\frac{\varepsilon}{\rho}\right)^{N-2s}\right).
\end{equation*}
Then, since $\Omega$ is a smooth domain, by the above estimates we find
\begin{equation*}
\begin{split}
\int_{\mathscr{C}_{\Omega}}y^{1-2s}\psi_{\rho}^2|\nabla w_{\varepsilon}|^2dxdy&=\int_{\mathscr{C}_\rho}y^{1-2s}\psi_{\rho}^2|\nabla w_{\varepsilon}|^2dxdy=\int_{\mathscr{C}_\rho}y^{1-2s}|\nabla w_{\varepsilon}|^2dxdy+O\left(\left(\frac{\varepsilon}{\rho}\right)^{N-2s}\right)\\
&=\frac{1}{2}\int_{B_\rho^+(0)}y^{1-2s}|\nabla w_{\varepsilon}|^2dxdy+O\left(\left(\frac{\varepsilon}{\rho}\right)^{N-2s}\right),
\end{split}
\end{equation*}
Then, the estimate \eqref{est1} follows by applying Lemma \ref{lemma_est}.
On the other hand, since 
\begin{equation*}
\int_{|x|<\rho}|u_{\varepsilon}|^{2_s^*}dx=\int_{|x|<\rho}\frac{\varepsilon^N}{(\varepsilon^2+|x|^2)^N}dx=O\left(\left(\frac{\varepsilon}{\rho}\right)^{N}\right),
\end{equation*}
we have 
\begin{equation*}
\begin{split}
\int_{\Omega}|\psi_\rho u_{\varepsilon}|^{2_s^*}dx&=\int_{\Omega_\rho}|\psi_\rho u_{\varepsilon}|^{2_s^*}dx=\int_{\Omega_\rho}|u_{\varepsilon}|^{2_s^*}dx+O\left(\left(\frac{\varepsilon}{\rho}\right)^{N}\right)\\
&=\frac{1}{2}\int_{B_\rho(0)}| u_{\varepsilon}|^{2_s^*}+O\left(\left(\frac{\varepsilon}{\rho}\right)^{N}\right)dx.
\end{split}
\end{equation*}
Using Lemma \ref{lemma_est} we get \eqref{est1b}.
\end{proof}

Next, let us consider the family of functions 
\begin{equation}\label{eq:eta}
\eta_{\varepsilon}=\frac{\psi_{\rho} w_{\varepsilon}}{\|\psi_{\rho} u_{\varepsilon}\|_{L^{2_s^*}(\Omega)}}.
\end{equation}
with $\rho=\varepsilon^{\alpha}$ and $0<\alpha<1$ to be chosen (see \eqref{alpha_condition} below). Because of Lemma \ref{lemma_est_boundary}, we get
\begin{equation}\label{eq:trunc_sobolev}
\begin{split}
\|\eta_{\varepsilon}\|_{\mathcal{X}_{\Sigma_{\mathcal{D}}}^s(\mathscr{C}_{\Omega})}^2&
= \frac{\frac{1}{2}\|w_{\varepsilon}\|_{\mathcal{X}_{\Sigma_{\mathcal{D}}}^s(\mathscr{C}_{\Omega})}^2+O(\varepsilon^{(1-\alpha)(N-2s)})}{\left(\frac{1}{2}\right)^{\frac{2}{2_s^*}}\|u_{\varepsilon}\|_{L^{2_s^*}(\mathbb{R}^N)}^{2}+O(\varepsilon^{(1-\alpha)N})}\leq2^{-\frac{2s}{N}}\frac{\|w_{\varepsilon}\|_{\mathcal{X}^s(\mathbb{R}_+^{N+1})}^2+O(\varepsilon^{(1-\alpha)(N-2s)})}{\|u_{\varepsilon}\|_{L^{2_s^*}(\mathbb{R}^N)}^{2}+O(\varepsilon^{(1-\alpha)N})}\\
& \leq2^{-\frac{2s}{N}}S(N,s)+O(\varepsilon^{(1-\alpha)(N-2s)}).
\end{split}
\end{equation}
\begin{lemma}\cite[Lemma 3.8]{Barrios2012}\label{lemma_est_L2} The family $\{\phi_{\rho} w_{\varepsilon}\}$ and its trace on $\{y=0\}$, namely $\{\phi_{\rho} u_{\varepsilon}\}$, satisfy
\begin{equation*}
\|\phi_\rho u_{\varepsilon}\|_{L^2(\Omega)}^2=\left\{
        \begin{tabular}{lr}
        $C\varepsilon^{2s}+O(\varepsilon^{N-2s})$ & if $N>4s$, \\
        $C\varepsilon^{2s}\log(1/\varepsilon)+O(\varepsilon^{2s})$  & if $N=4s$, \\
        \end{tabular}
        \right.
\end{equation*}
and
\begin{equation*}
\|\phi_\rho u_{\varepsilon} \|_{L^{2_s^*-1}(\Omega)}^{2_s^*-1}\geq C\varepsilon^{\frac{N-2s}{2}},\quad \text{if }N>2s,
\end{equation*}
for $\varepsilon>0$ small enough.
\end{lemma}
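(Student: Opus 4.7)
The plan is to compute both norms directly by inserting the explicit expression \eqref{eq:sob_extremal} for $u_\varepsilon$ and rescaling via $x = \varepsilon y$. Since $\phi_\rho$ is a radial cut-off that equals $1$ on $B_{\rho/2}(0)$ and vanishes outside $B_{\rho}(0)$, one can sandwich the integrals over $\Omega$ between the same integrals over $B_{\rho/2}(0)$ and $B_\rho(0)$ respectively, so that the remaining analysis is reduced to one-dimensional radial integrals with explicit integrands.

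For the $L^2$-estimate, after substituting $x = \varepsilon y$ one finds
\begin{equation*}
\int_{|x|<\rho} u_\varepsilon^{2}\,dx = \varepsilon^{2s}\int_{|y|<\rho/\varepsilon}\frac{dy}{(1+|y|^{2})^{N-2s}} = C_N\,\varepsilon^{2s}\int_0^{\rho/\varepsilon}\frac{r^{N-1}}{(1+r^{2})^{N-2s}}\,dr.
\end{equation*}
The radial integrand has tail behaviour $r^{4s-N-1}$. When $N>4s$ it is integrable on $(0,\infty)$, so the integral converges to a positive constant as $\varepsilon\to 0^+$ and the error from truncating at $\rho/\varepsilon$ is of order $(\rho/\varepsilon)^{4s-N}$, which multiplied by $\varepsilon^{2s}$ gives $O(\varepsilon^{N-2s})$ (with $\rho$ fixed). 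When $N=4s$ the integrand is $r^{-1}$ at infinity, and the integral equals $\log(\rho/\varepsilon) + O(1)$, producing the logarithmic factor $C\,\varepsilon^{2s}\log(1/\varepsilon)+O(\varepsilon^{2s})$. The discrepancy between $u_\varepsilon^{2}$ and $(\phi_\rho u_\varepsilon)^{2}$ on the annulus $\{\rho/2<|x|<\rho\}$ is controlled by the same type of bound and absorbed into the error terms.

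For the $L^{2_s^*-1}$-estimate, noting $2_s^*-1=(N+2s)/(N-2s)$, the analogous change of variables yields
\begin{equation*}
\int_{|x|<\rho/2} u_\varepsilon^{2_s^*-1}\,dx = \varepsilon^{(N-2s)/2}\int_{|y|<\rho/(2\varepsilon)}\frac{dy}{(1+|y|^{2})^{(N+2s)/2}}.
\end{equation*}
The integrand decays like $r^{-(N+2s)}$, so the radial integral has tail $r^{-2s-1}$ and converges to a positive constant as $\varepsilon\to 0^+$. Since $\phi_\rho\equiv 1$ on $B_{\rho/2}(0)$, this immediately provides the claimed lower bound $\|\phi_\rho u_\varepsilon\|_{L^{2_s^*-1}(\Omega)}^{2_s^*-1}\geq C\,\varepsilon^{(N-2s)/2}$.

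There is no genuine obstacle here; the computation is routine calculus of Riesz-type integrals once the scaling $x=\varepsilon y$ is performed. The only delicate point is the borderline case $N=4s$, where the divergence of the tail integral produces the logarithm and one has to keep track of the remainder $O(\varepsilon^{2s})$ separately from the leading coefficient, together with checking that the cut-off annulus $\{\rho/2<|x|<\rho\}$ contributes only lower-order terms thanks to the pointwise bound $u_\varepsilon(x)\le C(\varepsilon/\rho)^{(N-2s)/2}$ there.
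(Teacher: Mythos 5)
Your proof is correct and follows the standard approach: the paper does not prove this lemma itself but cites it from [Barrios2012], and the argument you give---rescaling $x=\varepsilon y$, reducing to radial Riesz-type integrals, and reading off the leading term and the truncation error from the tail exponent $4s-N-1$ (respectively $-2s-1$), with the borderline $N=4s$ producing the logarithm---is exactly the computation used there. The paper's Remark immediately after the lemma records the same scaling identity $u_\varepsilon(x)=\varepsilon^{-(N-2s)/2}u_1(x/\varepsilon)$ in the variant $\rho=\varepsilon^\alpha$, confirming the mechanism you use.
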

\begin{remark}
In this case the dependence on the cut-off radius can be neglected since
\begin{equation*}
\begin{split}
\|\phi_\rho u_{\varepsilon}\|_{L^2(\Omega)}^2&=\int_{\Omega}|\phi_\rho u_\varepsilon|^2dx\geq\int_{\{|x|<\frac{\rho}{2}\}}|u_\varepsilon|^2dx=\varepsilon^{-(N-2s)}\int_{\{|x|<\frac{\rho}{2}\}}\left|u_1\left(\frac{x}{\varepsilon}\right)\right|^2dx\\
&=\varepsilon^{2s}\int_{\{|x|<\frac{\rho}{2\varepsilon}\}}|u_1(x)|^2dx,
\end{split}
\end{equation*}
so that, for $\rho=\varepsilon^\alpha$ with $0<\alpha<1$ as above, we have
\begin{equation*}
\|\phi_\rho u_{\varepsilon}\|_{L^2(\Omega)}^2\geq\varepsilon^{2s}\int_{\{|x|<\varepsilon^{-(1-\alpha)}\}}|u_1(x)|^2dx\geq\varepsilon^{2s}\int_{\{|x|<1\}}|u_1(x)|^2dx,
\end{equation*}
for $\varepsilon<<1$. Moreover, noticing that
\begin{equation*}
\varepsilon^{2s}\int_{\{|x|>\frac{\rho}{2\varepsilon}\}}|u_1|^2dx=\varepsilon^{2s}\cdot O\left(\left(\frac{\varepsilon}{\rho}\right)^{N-4s}\right),
\end{equation*}
takin $\rho=\varepsilon^\alpha$ as above we have $\|\phi_\rho u_{\varepsilon}\|_{L^2(\Omega)}^2=\varepsilon^{2s}\left(\| u_1\|_{L^2(\mathbb{R}^N)}^2+O\left(\varepsilon^{(1-\alpha)(N-4s)}\right)\right)$. Then, it follows that also $\|\psi_\rho u_{\varepsilon}\|_{L^2(\Omega)}^2=O(\varepsilon^{2s})$.
\end{remark}
We finally prove that we can find paths with energy below the critical level $c^*_{\mathcal{D-N}}$. As commented above, contrary to the fractional Sobolev constant \eqref{sobolev}, the constant $\widetilde{S}(\Sigma_{\mathcal{D}})$ is attained for $|\Sigma_{\mathcal{D}}|$ small enough. Indeed, as by Proposition \ref{prop_cota} we have $\displaystyle \widetilde{S}(\Sigma_{\mathcal{D}})\leq 2^{\frac{-2s}{N}}S(N,s)$,  we have two options:
\begin{itemize}
\item[1)]$\displaystyle \widetilde{S}(\Sigma_{\mathcal{D}})<2^{\frac{-2s}{N}}S(N,s)$. In this case, the constant $\displaystyle \widetilde{S}(\Sigma_{\mathcal{D}})$ is attained by Theorem \ref{th_att} and no concentration occurs.\vspace{0.2cm}
\item[2)]$\displaystyle \widetilde{S}(\Sigma_{\mathcal{D}})=2^{\frac{-2s}{N}}S(N,s)$. In this case, by Theorem \ref{CCA}, minimizing sequences for $\displaystyle \widetilde{S}(\Sigma_{\mathcal{D}})$ concentrate at a boundary point $x_0\in\overline{\Sigma}_{\mathcal{N}}$.
\end{itemize}
\begin{lemma}\label{lem:below_level0} 
Assume $\displaystyle \widetilde{S}(\Sigma_{\mathcal{D}})<2^{\frac{-2s}{N}}S(N,s)$. Then, there exists $\tilde{w}\in \mathcal{X}_{\Sigma_{\mathcal{D}}}^s(\mathscr{C}_{\Omega})$, $\tilde{w}>0$ such that
\begin{equation*}
\sup\limits_{t\geq0}\widetilde{J}_{\lambda}(t\tilde{w})<c^*_{\mathcal{D-N}}.
\end{equation*}
\end{lemma}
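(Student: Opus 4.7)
My plan is to exploit the attainability of $\widetilde{S}(\Sigma_{\mathcal{D}})$ granted by Theorem \ref{th_att}. Since by hypothesis $\widetilde{S}(\Sigma_{\mathcal{D}})<2^{-2s/N}S(N,s)$, there exists a nontrivial nonnegative minimizer $\tilde{w}\in\mathcal{X}_{\Sigma_{\mathcal{D}}}^s(\mathscr{C}_{\Omega})$. Replacing $\tilde{w}$ by $|\tilde{w}|$ and rescaling, I may assume $\tilde{w}\geq 0$ and $\|\tilde{w}(\cdot,0)\|_{L^{2_s^*}(\Omega)}=1$, so that $\|\tilde{w}\|_{\mathcal{X}_{\Sigma_{\mathcal{D}}}^s(\mathscr{C}_{\Omega})}^2=\widetilde{S}(\Sigma_{\mathcal{D}})$. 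This candidate is the natural choice because the pure Sobolev-type terms evaluated at $t\tilde{w}$ produce exactly the critical energy level.

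Next, I would unwind the definition of $G_\lambda$ and bound the relevant integrand from below using the elementary inequality
\begin{equation*}
(u_0+\tau)^{2_s^*}\geq u_0^{2_s^*}+2_s^*\,u_0^{2_s^*-1}\tau+\tau^{2_s^*},\qquad \tau\geq 0,
\end{equation*}
which is valid because $2_s^*>2$ (this follows from convexity of $r\mapsto r^{2_s^*-1}$ on $[0,\infty)$, as the function $\tau\mapsto (u_0+\tau)^{2_s^*}-u_0^{2_s^*}-2_s^* u_0^{2_s^*-1}\tau-\tau^{2_s^*}$ vanishes at $\tau=0$ and has nonnegative derivative). Combining this with the strict positivity of the concave part $\lambda\int_0^{t\tilde{w}}[(u_0+\tau)^q-u_0^q]\,d\tau>0$ for $t>0$, I obtain
\begin{equation*}
\int_\Omega G_\lambda(t\tilde{w})\,dx\geq \frac{t^{2_s^*}}{2_s^*}+\lambda\int_\Omega\!\!\int_0^{t\tilde{w}}\!\![(u_0+\tau)^q-u_0^q]\,d\tau\,dx,
\end{equation*}
where the normalization $\|\tilde{w}(\cdot,0)\|_{L^{2_s^*}(\Omega)}=1$ was used. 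Since $u_0>0$ in $\Omega$ by the maximum principle and $\tilde{w}(\cdot,0)>0$ on a set of positive measure, the residual term is strictly positive for every $t>0$.

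Therefore, for every $t\geq 0$,
\begin{equation*}
\widetilde{J}_\lambda(t\tilde{w})\leq h(t)-R(t),\qquad h(t):=\frac{t^2}{2}\widetilde{S}(\Sigma_{\mathcal{D}})-\frac{t^{2_s^*}}{2_s^*},
\end{equation*}
with $R(t)\geq 0$ and $R(t)>0$ for $t>0$. A direct computation shows $\max_{t\geq 0}h(t)=\frac{s}{N}[\widetilde{S}(\Sigma_{\mathcal{D}})]^{N/2s}=c^*_{\mathcal{D-N}}$, attained at $t_0=[\widetilde{S}(\Sigma_{\mathcal{D}})]^{1/(2_s^*-2)}>0$. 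Because $\widetilde{J}_\lambda(t\tilde{w})\to-\infty$ as $t\to\infty$ (coercivity coming from the dominant $-t^{2_s^*}/2_s^*$ term) and $\widetilde{J}_\lambda(0)=0$, the supremum of $\widetilde{J}_\lambda(t\tilde{w})$ is attained at some $t^*\in(0,\infty)$; at this point the strict inequality $\widetilde{J}_\lambda(t^*\tilde{w})<h(t^*)\leq c^*_{\mathcal{D-N}}$ yields the claim.

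The main obstacle is merely the bookkeeping of sign: one must verify that the corrections introduced by subtracting $u_0^q$ and $u_0^{2_s^*-1}$ in $g_\lambda$ indeed leave a strictly positive remainder after integration, and that the test direction is exactly the one for which the critical term $\frac{t^{2_s^*}}{2_s^*}$ balances the $\widetilde{S}(\Sigma_{\mathcal{D}})$ factor in $h(t)$. The attainability provided by Theorem \ref{th_att} is what makes this clean, since otherwise one would need to work with truncated extremals and pay $O(\varepsilon^{N-2s})$ error terms which must be absorbed by a lower-order gain, as will be needed in the complementary case.
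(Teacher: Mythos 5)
Your proof is correct and follows essentially the same route as the paper: take the (rescaled) extremal function for $\widetilde{S}(\Sigma_{\mathcal{D}})$ granted by Theorem \ref{th_att}, observe that the pure quadratic/critical part of $\widetilde{J}_\lambda(t\tilde w)$ reduces exactly to $h(t)=\frac{t^2}{2}\widetilde{S}(\Sigma_{\mathcal{D}})-\frac{t^{2_s^*}}{2_s^*}$ whose maximum is $c^*_{\mathcal{D-N}}$, and then show the remaining terms coming from $G_\lambda$ contribute a strictly negative correction. If anything your write-up is slightly more careful than the paper's, since you explicitly unwind $G_\lambda$ and justify that the residual after subtracting $u_0^{2_s^*-1}$ and $u_0^q$ is nonnegative via the elementary inequality $(u_0+\tau)^{2_s^*}\geq u_0^{2_s^*}+2_s^*u_0^{2_s^*-1}\tau+\tau^{2_s^*}$ and then observe attainment of the supremum, details that the paper leaves implicit.
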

\begin{proof}
If $\displaystyle \widetilde{S}(\Sigma_{\mathcal{D}})<2^{\frac{-2s}{N}}S(N,s)$ no concentration occurs and $\widetilde{S}(\Sigma_{\mathcal{D}})$ is attained at some $\tilde{u}_{}\in H_{\Sigma_{\mathcal{D}}}^s(\Omega)$ that we can assume to be positive (cf. \cite{Colorado2019}). Take $\displaystyle\tilde{w}=\frac{E_s[\tilde{u}]}{\|\tilde{u}\|_{2_s^*}}$ so that, by \eqref{norma2},
\begin{equation*}
\|\tilde{w}\|_{\mathcal{X}_{\Sigma_{\mathcal{D}}}^s(\mathscr{C}_{\Omega})}^2=\widetilde{S}(\Sigma_{\mathcal{D}}).
\end{equation*}
Then, as $\lambda>0$ and $\tilde{w}>0$,
\begin{equation*}
\begin{split}
\widetilde{J}_{\lambda}(t\tilde{w})&=\frac{t^2}{2}\|\tilde{w}\|_{\mathcal{X}_{\Sigma_{\mathcal{D}}}^s(\mathscr{C}_{\Omega})}^2-\frac{\lambda}{q+1}\|\tilde{w}(x,0)\|_{L^{q+1}(\Omega)}^{q+1}-\frac{t^{2_s^*}}{2_s^*}\|\tilde{w}(x,0)\|_{L^{2_s^*}(\Omega)}^{2_s^*}\\
&=\left(\frac{t^2}{2}\widetilde{S}(\Sigma_{\mathcal{D}})-\frac{\lambda}{q+1}\|\tilde{w}(x,0)\|_{L^{q+1}(\Omega)}^{q+1}-\frac{t^{2_s^*}}{2_s^*}\right)\\
&<\left(\frac{t^2}{2}\widetilde{S}(\Sigma_{\mathcal{D}})-\frac{t^{2_s^*}}{2_s^*}\right)=\vcentcolon  g(t).
\end{split}
\end{equation*}
Since $g(t)$ attains its maximum at $t_0=[\widetilde{S}(\Sigma_{\mathcal{D}})]^{\frac{1}{2_s^*-2}}$ and $g(t_0)=[\widetilde{S}(\Sigma_{\mathcal{D}})]^{\frac{N}{2s}}$, we conclude
\begin{equation*}
\sup\limits_{t\geq0}\widetilde{J}_{\lambda}(t\tilde{w})<[\widetilde{S}(\Sigma_{\mathcal{D}})]^{\frac{N}{2s}}=c^*_{\mathcal{D-N}}.
\end{equation*}
\end{proof}
Next we address the case $\displaystyle \widetilde{S}(\Sigma_{\mathcal{D}})=2^{\frac{-2s}{N}}S(N,s)$. Since now the constant $\widetilde{S}(\Sigma_{\mathcal{D}})$ is not attained, we need to use the family of functions $\eta_{\varepsilon}$ defined in \eqref{eq:eta} and based on the extremal functions of the Sobolev inequality to construct paths below the critical level $c^*_{\mathcal{D-N}}=\frac{s}{N}[\widetilde{S}(\Sigma_{\mathcal{D}})]^{\frac{N}{2s}}=\frac{s}{2N}[S(N,s)]^{\frac{N}{2s}}$.
\begin{lemma}\label{lem:below_level} 
Assume $\displaystyle \widetilde{S}(\Sigma_{\mathcal{D}})=2^{\frac{-2s}{N}}S(N,s)$. Then, there exists $\varepsilon>0$ small enough such that 
\begin{equation*}
\sup\limits_{t\geq0}\widetilde{J}_{\lambda}(t\eta_{\varepsilon})<c^*_{\mathcal{D-N}}.
\end{equation*}
\end{lemma}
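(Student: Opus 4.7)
The plan is to estimate $\sup_{t\geq 0}\widetilde{J}_\lambda(t\eta_\varepsilon)$ by combining the Sobolev-type estimate \eqref{eq:trunc_sobolev} with a lower bound on $\int_\Omega G_\lambda(t\eta_\varepsilon)\,dx$ that extracts a strict positive correction coming from the moved minimum $u_0$. First I would pick $x_0\in\Sigma_{\mathcal{N}}$ with $u_0(x_0)>0$, which is available since $u_0$ is a strictly positive solution and the Neumann condition precludes vanishing on $\Sigma_{\mathcal{N}}$, and build $\eta_\varepsilon$ from \eqref{eq:eta} centered at this $x_0$ so that $\|\eta_\varepsilon(\cdot,0)\|_{L^{2_s^*}(\Omega)}=1$. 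Set $h(t):=\widetilde{J}_\lambda(t\eta_\varepsilon)$: since $h(0)=0$, $h$ is continuous, and $h(t)\to-\infty$ as $t\to+\infty$, the supremum is attained at some $t_\varepsilon>0$. A direct comparison with the reduced functional $t\mapsto \frac{t^2}{2}\|\eta_\varepsilon\|^2-\frac{t^{2_s^*}}{2_s^*}$ (whose maximum equals $\frac{s}{N}\|\eta_\varepsilon\|^{N/s}$, attained at $t_0=\|\eta_\varepsilon\|^{2/(2_s^*-2)}$), together with \eqref{eq:trunc_sobolev}, shows that $t_\varepsilon$ stays uniformly bounded away from $0$ and $\infty$ as $\varepsilon\to 0$.

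Second, I would lower-bound the perturbation $\int G_\lambda(t\eta_\varepsilon)\,dx$. The elementary pointwise inequalities $(u_0+b)^{2_s^*}\geq u_0^{2_s^*}+2_s^*u_0^{2_s^*-1}b+b^{2_s^*}$ (valid for $b\geq 0$ since $2_s^*>2$) and $(u_0+b)^{q+1}\geq u_0^{q+1}+(q+1)u_0^q b$ (convexity, as $q+1>1$) yield
\begin{equation*}
\int_\Omega G_\lambda(t\eta_\varepsilon)\,dx \geq \frac{t^{2_s^*}}{2_s^*}\int_\Omega \eta_\varepsilon^{2_s^*}\,dx=\frac{t^{2_s^*}}{2_s^*},
\end{equation*}
which by itself only gives $\sup h\leq c^*_{\mathcal{D-N}}+O\bigl(\varepsilon^{(1-\alpha)(N-2s)}\bigr)$, not the strict inequality. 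To go further I would sharpen the estimate on the bulk region $\{u_0\geq\sigma>0\}$ around $x_0$: a Taylor expansion of the critical (and, when useful, the concave) nonlinearity extracts a quadratic-in-$b$ remainder bounded below by $c\int u_0^{2_s^*-2}(t\eta_\varepsilon)^2\,dx$ (respectively $c\,\lambda\int u_0^{q-1}(t\eta_\varepsilon)^2\,dx$), and the $L^2$-estimate in Lemma \ref{lemma_est_L2} then produces a contribution of order $\varepsilon^{2s}$ (with a $\log(1/\varepsilon)$ factor at $N=4s$). Selecting the cut-off scale $\rho=\varepsilon^\alpha$ with $0<\alpha<\frac{N-4s}{N-2s}$, so that $(1-\alpha)(N-2s)>2s$, one arrives at
\begin{equation*}
\sup_{t\geq 0}\widetilde{J}_\lambda(t\eta_\varepsilon) \leq c^*_{\mathcal{D-N}}+O\bigl(\varepsilon^{(1-\alpha)(N-2s)}\bigr)-c\,\varepsilon^{2s}<c^*_{\mathcal{D-N}}
\end{equation*}
for $\varepsilon$ small enough. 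Borderline dimensions $N\leq 4s$ would be handled using the alternative pointwise bound $(u_0+b)^{2_s^*}\geq u_0^{2_s^*}+2_s^*u_0^{2_s^*-1}b+b^{2_s^*}+C u_0 b^{2_s^*-1}$ (available when $2_s^*\geq 3$, i.e., $N\leq 6s$) paired with the $L^{2_s^*-1}$-estimate of Lemma \ref{lemma_est_L2}, which yields an improvement of order $\varepsilon^{(N-2s)/2}$ instead.

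The main obstacle is extracting a strictly positive correction from $\int G_\lambda(t\eta_\varepsilon)\,dx$ strong enough to beat the error $O\bigl(\varepsilon^{(1-\alpha)(N-2s)}\bigr)$ coming from \eqref{eq:trunc_sobolev}: the clean convexity inequalities give no gain past $\frac{t^{2_s^*}}{2_s^*}$, so one must localise the Taylor remainder of the moved nonlinearity on a region where $u_0$ is uniformly bounded below and then match its size, coming from a concentration estimate in Lemma \ref{lemma_est_L2}, against the cut-off parameter $\alpha$. The balancing of these two exponents, and the need to argue dimension-by-dimension, is the delicate point.
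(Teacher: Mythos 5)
Your proposal follows essentially the same strategy as the paper: estimate $\|\eta_\varepsilon\|^2_{\mathcal{X}_{\Sigma_{\mathcal{D}}}^s(\mathscr{C}_{\Omega})}$ by \eqref{eq:trunc_sobolev}, extract from $G_\lambda$ a quadratic correction of order $\int_{\Omega} u_0^{2_s^*-2}\eta_\varepsilon^2\,dx\geq C\|\eta_\varepsilon\|_{L^2(\Omega)}^2=O(\varepsilon^{2s})$ via Lemma \ref{lemma_est_L2}, balance by choosing $\alpha<\frac{N-4s}{N-2s}$, and switch to the $L^{2_s^*-1}$ estimate for $2s<N<4s$; so the scheme is correct and matches the paper's. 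The only place where your sketch is looser than the paper's is the extraction of the correction: a second-order Taylor remainder alone drops the $w^{2_s^*}$ term, while the crude bound $(u_0+b)^{2_s^*}\geq u_0^{2_s^*}+2_s^*u_0^{2_s^*-1}b+b^{2_s^*}$ alone drops the quadratic one, so combining them region-by-region needs some bookkeeping; the paper sidesteps this by applying the single inequality $(a+b)^p\geq a^p+b^p+\mu a^{p-1}b$ (for $p>1$ and some $\mu(p)>0$) to the integrand $g_\lambda(x,t)$ with $p=2_s^*-1$ and integrating in $t$, which yields $G_\lambda(w)\geq\tfrac{1}{2_s^*}w^{2_s^*}+\tfrac{\mu}{2}u_0^{2_s^*-2}w^2$ in one step, and then simply invokes $u_0\geq a_0>0$ on the support of $\eta_\varepsilon$ instead of formally localising to a bulk set.
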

\begin{proof}
Assume $N\geq4s$. Then, using as before that 
\begin{equation}\label{ineq:ab}
(a+b)^p\geq a^p+b^p+\mu a^{p-1}b,\quad\text{for } a,b\geq0,\ p>1,\ \text{and some }\mu=\mu(p)>0,
\end{equation}
we have 
\begin{equation*}
G_{\lambda}(w)\geq\frac{1}{2_s^*}w^{2_s^*}+\frac{\mu}{2}w^2w_0^{2_s^*-2}\qquad\text{and}\qquad
g_{\lambda}(x,t)\geq t^{2_s^*-1}+\mu w_0^{2_s^*-2}.
\end{equation*}
Hence,
\begin{equation*}
\widetilde{J}_{\lambda}(t\eta_{\varepsilon})\leq \frac{t^2}{2}\|\eta_{\varepsilon}\|_{\mathcal{X}_{\Sigma_{\mathcal{D}}}^s(\mathscr{C}_{\Omega})}^2-\frac{t^{2_s^*}}{2_s^*}-\frac{t^2}{2}\mu\int_{\Omega}w_0^{2_s^*-2}\eta_{\varepsilon}^2dx.
\end{equation*}
Since $w_0\geq a_0>0$ in $supp(\eta_{\varepsilon})$, we get
\begin{equation*}
\widetilde{J}_{\lambda}(t\eta_{\varepsilon})\leq \frac{t^2}{2}\|\eta_{\varepsilon}\|_{\mathcal{X}_{\Sigma_{\mathcal{D}}}^s(\mathscr{C}_{\Omega})}^2-\frac{t^{2_s^*}}{2_s^*}-\frac{t^2}{2}\widetilde{\mu}\|\eta_{\varepsilon}\|_{L^2(\Omega)}^2=\vcentcolon g_{\varepsilon}(t).
\end{equation*}
It is clear that $\lim\limits_{t\to+\infty}g_{\varepsilon}(t)=-\infty$ and $\sup\limits_{t\geq 0}g_{\varepsilon}(t)$ is attained at some $t_{\varepsilon}$. In particular, since
\begin{equation*}
0=g_{\varepsilon}'(t_{\varepsilon})=t_{\varepsilon}\|\eta_{\varepsilon}\|_{\mathcal{X}_{\Sigma_{\mathcal{D}}}^s(\mathscr{C}_{\Omega})}^2-t_{\varepsilon}^{2_s^*-1}-t_{\varepsilon}\widetilde{\mu}\|\eta_{\varepsilon}\|_{L^2(\Omega)},
\end{equation*}
we have
\begin{equation*}
t_{\varepsilon}\leq \|\eta_{\varepsilon}\|_{\mathcal{X}_{\Sigma_{\mathcal{D}}}^s(\mathscr{C}_{\Omega})}^{\frac{2}{2_s^*-2}}.
\end{equation*}
Leet us note that, by Lemma \ref{lemma_est_boundary}, $t_{\varepsilon}\geq C>0$. On the other hand, the function 
\begin{equation*}
h_{\varepsilon}(t)=\frac{t^2}{2}\|\eta_{\varepsilon}\|_{\mathcal{X}_{\Sigma_{\mathcal{D}}}^s(\mathscr{C}_{\Omega})}^2-\frac{t^{2_s^*}}{2_s^*},
\end{equation*}
is increasing on $[0, \|\eta_{\varepsilon}\|_{\mathcal{X}_{\Sigma_{\mathcal{D}}}^s(\mathscr{C}_{\Omega})}^{\frac{2}{2_s^*-2}}]$. Therefore,
\begin{equation*}
\sup\limits_{t\geq 0}g_{\varepsilon}(t)=g_{\varepsilon}(t_{\varepsilon})\leq\frac{s}{N}\|\eta_{\varepsilon}\|_{\mathcal{X}_{\Sigma_{\mathcal{D}}}^s(\mathscr{C}_{\Omega})}^{\frac{N}{s}}-C\|\eta_{\varepsilon}\|_{L^2(\Omega)}^2.
\end{equation*}
Since $\|u_{\varepsilon}\|_{L^{2_s^*}(\Omega)}$ does not depends on $\varepsilon$ and $\displaystyle \widetilde{S}(\Sigma_{\mathcal{D}})=2^{\frac{-2s}{N}}S(N,s)$, by \eqref{eq:trunc_sobolev} and Lemma \ref{lemma_est_L2}, we have 
\begin{equation*}
\begin{split}
\|\eta_{\varepsilon}\|_{\mathcal{X}_{\Sigma_{\mathcal{D}}}^s(\mathscr{C}_{\Omega})}^2&\leq\widetilde{S}(\Sigma_{\mathcal{D}})+O(\varepsilon^{(1-\alpha)(N-2s)}),\\
\|\eta_{\varepsilon}\|_{L^2(\Omega)}^2&=\left\{
        \begin{tabular}{lr}
        $O(\varepsilon^{2s})$ & if $N>4s$, \\
        $O(\varepsilon^{2s}\log(1/\varepsilon))$  & if $N=4s$. \\
        \end{tabular}
        \right.
        \end{split}
\end{equation*}
Thus, for $N>4s$ we get 
\begin{equation*}
g_{\varepsilon}(t_{\varepsilon})\leq\frac{s}{N}[\widetilde{S}(\Sigma_{\mathcal{D}})]^{\frac{N}{2s}}+C_1\varepsilon^{(1-\alpha)(N-2s)}-C_2\varepsilon^{2s}<\frac{s}{N}[\widetilde{S}(\Sigma_{\mathcal{D}})]^{\frac{N}{2s}}=c_{\mathcal{D-N}}^*,
\end{equation*}
for $\varepsilon>0$ small enough and 
\begin{equation}\label{alpha_condition}
0<\alpha<\frac{N-4s}{N-2s}.
\end{equation}
Note that a similar relation between the concentration parameter $\varepsilon>0$ and the cut-off radius $\rho=\varepsilon^{\alpha}$ was obtained in \cite[Lemma 3.2]{Grossi1990}.\newline
If $N=4s$, the same conclusion follows. The case $2s<N<4s$ follows using the inequality \eqref{ineq:ab}, which gives, for some $\mu'>0$,
\begin{equation}\label{ineq:fin}
G_{\lambda}(w)\geq\frac{1}{2_s^*}w^{2_s^*}+\mu'w_0w^{2_s^*-1}.
\end{equation}
The result then follows arguing in a similar way as above, using \eqref{ineq:fin} together with the second estimate in Lemma \ref{lemma_est_L2}.
\end{proof}
\begin{remark}
Let us recall the value $c_{\mathcal{D}}^*=\frac{s}{N}[S(N,s)]^{\frac{N}{2s}}$ establishes the level below of which a local Palais--Smale condition holds for \eqref{p_lambda} under Dirichlet boundary condition (cf. \cite[Lemma 3.5]{Barrios2012}). Then, since $\displaystyle \widetilde{S}(\Sigma_{\mathcal{D}})\leq2^{\frac{-2s}{N}}S(N,s)$, we have
\begin{equation*}
c_{\mathcal{D}-\mathcal{N}}^*\leq\frac{1}{2}c_{\mathcal{D}}^*.
\end{equation*}
\end{remark}
\begin{proof}[Proof of Theorem \ref{sublinear}-(4)]
Let us fix $\lambda\in(0,\Lambda)$. In case of having $\displaystyle \widetilde{S}(\Sigma_{\mathcal{D}})<2^{\frac{-2s}{N}}S(N,s)$, since $\lim\limits_{t\to+\infty}\widetilde{J}_{\lambda}(t\tilde{w})=-\infty$, there exists $M\gg1$ such that $\widetilde{J}_{\lambda}(M\tilde{w})<\widetilde{J}_{\lambda}(0)$. On the other hand, if $\displaystyle \widetilde{S}(\Sigma_{\mathcal{D}})=2^{\frac{-2s}{N}}S(N,s)$, since $\lim\limits_{t\to+\infty}\widetilde{J}_{\lambda}(t\eta_{\varepsilon})=-\infty$, there exists $M_{\varepsilon}\gg1$ such that $\widetilde{J}_{\lambda}(M_{\varepsilon}\eta_{\varepsilon})<\widetilde{J}_{\lambda}(0)$. By Corollary \ref{cor:min_0_ext}, there exists $\rho>0$ such that, if $\|w\|_{\mathcal{X}_{\Sigma_{\mathcal{D}}}^s(\mathscr{C}_{\Omega})}=\rho$, then $\widetilde{J}_{\lambda}(w)\geq \widetilde{J}_{\lambda}(0)$. Next, if $\displaystyle \widetilde{S}(\Sigma_{\mathcal{D}})<2^{\frac{-2s}{N}}S(N,s)$, let us define
\begin{equation*}
\Gamma=\{\gamma\in\mathcal{C}([0,1],\mathcal{X}_{\Sigma_{\mathcal{D}}}^s(\mathscr{C}_{\Omega})):\ \gamma(0)=0,\gamma(1)=M\tilde{w}\},
\end{equation*}
and the minimax value
\begin{equation*}
c=\inf\limits_{\gamma\in\Gamma}\sup\limits_{0\leq t\leq1}\widetilde{J}_{\lambda}(\gamma(t)).
\end{equation*}
By the arguments above, $c\geq\widetilde{J}_{\lambda}(0)$. Moreover, by Lemma \ref{lem:below_level0}, we have 
\begin{equation*}
c\leq\sup\limits_{0\leq t\leq1}\widetilde{J}_{\lambda}(tM\tilde{w})=\sup\limits_{t\geq0}\widetilde{J}_{\lambda}(t\tilde{w})<c_{\mathcal{D-N}}^*.
\end{equation*}
On the other hand, if $\displaystyle \widetilde{S}(\Sigma_{\mathcal{D}})=2^{\frac{-2s}{N}}S(N,s)$, let us define
\begin{equation*}
\Gamma_{\varepsilon}=\{\gamma\in\mathcal{C}([0,1],\mathcal{X}_{\Sigma_{\mathcal{D}}}^s(\mathscr{C}_{\Omega})):\ \gamma(0)=0,\gamma(1)=M_{\varepsilon}\eta_{\varepsilon}\},
\end{equation*}
and the minimax value
\begin{equation*}
c_{\varepsilon}=\inf\limits_{\gamma\in\Gamma_{\varepsilon}}\sup\limits_{0\leq t\leq1}\widetilde{J}_{\lambda}(\gamma(t)).
\end{equation*}
By the arguments above, $c_{\varepsilon}\geq\widetilde{J}_{\lambda}(0)$. Moreover, by Lemma \ref{lem:below_level}, we have 
\begin{equation*}
c_{\varepsilon}\leq\sup\limits_{0\leq t\leq1}\widetilde{J}_{\lambda}(tM_{\varepsilon}\eta_{\varepsilon})=\sup\limits_{t\geq0}\widetilde{J}_{\lambda}(t\eta_{\varepsilon})<c_{\mathcal{D-N}}^*,
\end{equation*}
for $\varepsilon\ll1$ small enough. Thus, by Lemma \ref{lem:PS} and the Mountain Pass theorem (cf. \cite{Ambrosetti1973}) if $c>\widetilde{J}_{\lambda}(0)$ (resp. $c_{\varepsilon}>\widetilde{J}_{\lambda}(0)$), or the refinement of the MPT (cf. \cite{Ghoussoub1989}) if $c>\widetilde{J}_{\lambda}(0)$ (resp.$c_{\varepsilon}=\widetilde{J}_{\lambda}(0)$), we get the
existence of a non-trivial solution to $(\widetilde{P}_{\lambda})$, provided $u\equiv0$ is its unique solution. Of course this is a contradiction. Hence, $\widetilde{J}_{\lambda}$ admits a critical point $\tilde{w}>0$ so that $\widetilde{I}_{\lambda}$ admits a nontrivial critical point $\tilde{u}=\tilde{w}(x,0)>0$. As a consequence, $\hat{u}=u_0+\tilde{u}$ is a solution, different of $u_0$, to \eqref{p_lambda}. 
\end{proof}

\section{Convex case, $1<q<2_s^*-1$}\label{Section_convex}
In this section we address problem \eqref{p_lambda} in the convex setting $q>1$. Since the arguments carried out in the former section works with minor modifications in this convex case we will only indicate the main differences.
First, we have that the functional $J_{\lambda}$ has the appropriate geometry.
\begin{proposition}\label{prop:mpt_geom}
Let $\lambda>0$ and $1<q<2_s^*-1$. Then, the functional $J_{\lambda}$ has the Mountain Pass geometry. That is, there exists $\rho>0$ and $\beta>0$ such that 
\begin{enumerate}
\item $J_{\lambda}(0)=0$,
\item for all $w\in\mathcal{X}_{\Sigma_{\mathcal{D}}}^s(\mathscr{C}_{\Omega})$ with $\|w\|_{\mathcal{X}_{\Sigma_{\mathcal{D}}}^s(\mathscr{C}_{\Omega})}=\rho$ we have $J_{\lambda}(w)\geq\beta$,
\item there exists a positive function $h\in \mathcal{X}_{\Sigma_{\mathcal{D}}}^s(\mathscr{C}_{\Omega})$ such that $\|h\|_{\mathcal{X}_{\Sigma_{\mathcal{D}}}^s(\mathscr{C}_{\Omega})}>\rho$ and $J_{\lambda}(h)<\beta$.
\end{enumerate}
\end{proposition}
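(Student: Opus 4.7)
The plan is to verify the three items directly from the structure of $J_\lambda$, using the adapted trace inequality (Lemma \ref{lem:traceineq}) to control the nonlinear terms by the $\mathcal{X}^s_{\Sigma_{\mathcal{D}}}(\mathscr{C}_\Omega)$-norm.

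Item \textit{(1)} is immediate from the definition \eqref{extensionfunctional}, since all three terms vanish at $w=0$. For item \textit{(2)}, recall that $1<q<2^*_s-1$ implies $2<q+1<2^*_s$. From Lemma \ref{lem:traceineq} we have
\begin{equation*}
\left(\int_\Omega |w(x,0)|^{2^*_s}\,dx\right)^{2/2^*_s}\le \widetilde{S}(\Sigma_{\mathcal{D}})^{-1}\|w\|^2_{\mathcal{X}^s_{\Sigma_{\mathcal{D}}}(\mathscr{C}_\Omega)},
\end{equation*}
and since $\Omega$ is bounded, H\"older's inequality gives $\|w(\cdot,0)\|_{L^{q+1}(\Omega)}\le|\Omega|^{\frac{1}{q+1}-\frac{1}{2^*_s}}\|w(\cdot,0)\|_{L^{2^*_s}(\Omega)}$. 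Combining both, there exist constants $C_1,C_2>0$ (depending on $N,s,q,\lambda,|\Omega|,\widetilde{S}(\Sigma_{\mathcal{D}})$) such that
\begin{equation*}
J_\lambda(w)\ge \frac{1}{2}\|w\|^2_{\mathcal{X}^s_{\Sigma_{\mathcal{D}}}(\mathscr{C}_\Omega)}-C_1\|w\|^{q+1}_{\mathcal{X}^s_{\Sigma_{\mathcal{D}}}(\mathscr{C}_\Omega)}-C_2\|w\|^{2^*_s}_{\mathcal{X}^s_{\Sigma_{\mathcal{D}}}(\mathscr{C}_\Omega)}.
\end{equation*}
Since $q+1>2$ and $2^*_s>2$, the right-hand side behaves like $\tfrac12\rho^2(1+o(1))$ as $\rho=\|w\|\to 0^+$, so we can choose $\rho>0$ small enough that this quantity is bounded below by some $\beta>0$, which yields \textit{(2)}.

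For item \textit{(3)}, fix any nontrivial nonnegative function $\varphi\in\mathcal{X}^s_{\Sigma_{\mathcal{D}}}(\mathscr{C}_\Omega)$ with $\varphi(x,0)\not\equiv 0$ (for instance, the $s$-harmonic extension of a positive $H^s_{\Sigma_{\mathcal{D}}}(\Omega)$-function). For $t>0$,
\begin{equation*}
J_\lambda(t\varphi)=\frac{t^2}{2}\|\varphi\|^2_{\mathcal{X}^s_{\Sigma_{\mathcal{D}}}(\mathscr{C}_\Omega)}-\frac{\lambda\, t^{q+1}}{q+1}\|\varphi(\cdot,0)\|^{q+1}_{L^{q+1}(\Omega)}-\frac{t^{2^*_s}}{2^*_s}\|\varphi(\cdot,0)\|^{2^*_s}_{L^{2^*_s}(\Omega)}.
\end{equation*}
Because $\min\{q+1,2^*_s\}>2$, the negative terms dominate as $t\to\infty$, so $J_\lambda(t\varphi)\to-\infty$. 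Pick $t_0$ large enough that $\|t_0\varphi\|_{\mathcal{X}^s_{\Sigma_{\mathcal{D}}}(\mathscr{C}_\Omega)}>\rho$ and $J_\lambda(t_0\varphi)<0<\beta$, and set $h=t_0\varphi$. This is the required function.

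No real obstacle arises: both the concave-type term and the critical term have exponent strictly greater than $2$, so they collaborate in the same direction both near the origin (controlled by the quadratic term through the trace inequality) and at infinity (where they dominate the quadratic term). The argument is the standard superquadratic Mountain Pass geometry; the only point to check carefully is the use of the mixed-boundary trace inequality rather than the Dirichlet one, which is why we invoked Lemma \ref{lem:traceineq} explicitly.
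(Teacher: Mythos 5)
Your proof is correct and follows the standard superquadratic Mountain Pass geometry argument, which is exactly what the paper does by deferring to \cite[Proposition 3.1]{Barrios2015}; you have simply supplied the details the paper omits, using the mixed-boundary trace inequality of Lemma \ref{lem:traceineq} in place of the Dirichlet one, as required.
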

\begin{proof}
The proof follows as in \cite[Proposition 3.1]{Barrios2015}, so we omit the details.
\end{proof}
Since the proof of Lemma \ref{lem:PS} can be adapted for this convex case $q>1$, the main point in order to prove Theorem \ref{superlinear} is then to show that we can find a local PS$_c$ sequence with energy level under the critical level $c_{\mathcal{D-N}}^*$. This step follows the same scheme as in the concave case. Ii $\displaystyle \widetilde{S}(\Sigma_{\mathcal{D}})<2^{\frac{-2s}{N}}S(N,s)$, we use the extremal functions of the constant $\displaystyle \widetilde{S}(\Sigma_{\mathcal{D}})$. Otherwise, if $\displaystyle \widetilde{S}(\Sigma_{\mathcal{D}})=2^{\frac{-2s}{N}}S(N,s)$, we proceed as in Lemma \ref{lem:below_level} using now the estimate (cf. \cite[Lemma 3.4]{Barrios2015}),
\begin{equation}\label{estim}
\|\eta_{\varepsilon}\|_{L^{q+1}(\Omega)}^{q+1}\geq C\varepsilon^{N-\left(\frac{N-2s}{2}\right)(q+1)},\quad\text{for }N>2s\left(1+\frac{1}{q}\right),
\end{equation}
with $\eta_{\varepsilon}$ as in \eqref{eq:eta}. Note that in this case, there is no restriction on the size of the parameter $\lambda>0$. We conclude the proof by using the Mountain Pass Theorem.
\begin{proof}[Proof of Theorem \ref{superlinear}]
Assume $\displaystyle \widetilde{S}(\Sigma_{\mathcal{D}})=2^{\frac{-2s}{N}}S(N,s)$ (the other case is similar). Let us define 
\begin{equation*}
\Gamma_{\varepsilon}=\{\gamma\in\mathcal{C}([0,1],\mathcal{X}_{\Sigma_{\mathcal{D}}}^s(\mathscr{C}_{\Omega})):\ \gamma(0)=0,\gamma(1)=M_{\varepsilon}\eta_{\varepsilon}\},
\end{equation*}
for some $M_{\varepsilon}\gg1$ such that $J_{\lambda}(M_{\varepsilon}\eta_{\varepsilon})<0$ with $\eta_{\varepsilon}$ defined as in \eqref{eq:eta}. Note that, for any $\gamma\in\Gamma_{\varepsilon}$ the function $t\mapsto\|\gamma(t)\|_{\mathcal{X}_{\Sigma_{\mathcal{D}}}^s(\mathscr{C}_{\Omega})}$ is continuous in $[0,1]$. Then, since $\|\gamma(0)\|_{\mathcal{X}_{\Sigma_{\mathcal{D}}}^s(\mathscr{C}_{\Omega})}=0$ and $\|\gamma(1)\|_{\mathcal{X}_{\Sigma_{\mathcal{D}}}^s(\mathscr{C}_{\Omega})}=\|M_{\varepsilon}\eta_{\varepsilon}\|_{\mathcal{X}_{\Sigma_{\mathcal{D}}}^s(\mathscr{C}_{\Omega})}>\rho$ for $M_{\varepsilon}$ large enough, there exists $t_0\in(0,1)$ such that $\|\gamma(t_0)\|_{\mathcal{X}_{\Sigma_{\mathcal{D}}}^s(\mathscr{C}_{\Omega})}=\rho$ for $\rho$ given in Proposition \ref{prop:mpt_geom}. As a consequence,  
\begin{equation*}
\sup\limits_{0\leq t\leq1}J_{\lambda}(\gamma(t))\geq J_{\lambda}(\gamma(t_0))\geq\inf\limits_{\|g\|_{\mathcal{X}_{\Sigma_{\mathcal{D}}}^s(\mathscr{C}_{\Omega})}=\rho}J_{\lambda}(g)\geq\beta>0
\end{equation*}
with $\beta>0$ given in Proposition \ref{prop:mpt_geom}. Thus,
\begin{equation*}
c_{\varepsilon}=\inf\limits_{\gamma\in\Gamma_{\varepsilon}}\sup\limits_{0\leq t\leq1}J_{\lambda}(\gamma(t))>0.
\end{equation*}
By the Mountain Pass Theorem (cf. \cite{Ambrosetti1973}) we conclude that the functional $J_{\lambda}$ has a critical point $w\in\mathcal{X}_{\Sigma_{\mathcal{D}}}^s(\mathscr{C}_{\Omega})$ provided $N>2s\left(1+\frac{1}{q}\right)$. Moreover, since $J_{\lambda}(w)=c_{\varepsilon}\geq\beta>0$ and $J_{\lambda}(0)=0$ the function $w\neq0$. Therefore, $u=w(x,0)$ is a nontrivial solution to \eqref{p_lambda} for $q>1$ and $\lambda>0$.
\end{proof}


\end{document}